\author{Jun Kitagawa}
\address{Department of Mathematics, Michigan State University}
\email{kitagawa@math.msu.edu}
\author{Quentin M\'erigot}
\address{Laboratoire de Mathématiques d'Orsay, Univ. Paris-Sud, CNRS,
  Université Paris-Saclay, 91405 Orsay, France}
\email{quentin.merigot@math.u-psud.fr}
\author{Boris Thibert}
\address{Laboratoire Jean Kuntzmann, Université Grenoble-Alpes}
\email{boris.thibert@univ-grenoble-alpes.fr}
\title[A Newton algorithm for semi-discrete optimal
  transport]{Convergence of a Newton algorithm for semi-discrete
  optimal transport}
\begin{document}
\begin{abstract}
  A popular way to solve optimal transport problems numerically
    is to assume that the source probability measure is absolutely
    continuous while the target measure is finitely supported. We
    introduce a damped Newton's algorithm in this setting, which is
  experimentally efficient, and we establish its global linear
  convergence for cost functions satisfying an assumption that appears
  in the regularity theory for optimal transport.
\end{abstract}

\maketitle

\tableofcontents
\section{Introduction}
Some problems in geometric optics or convex geometry can be recast as
optimal transport problems between probability measures: this includes
the far-field reflector antenna problem, Alexandrov's Gaussian
curvature prescription problem, etc. A popular way to solve these
problems numerically is to assume that the source probability measure
is absolutely continuous while the target measure is finitely
supported. We refer to this setting as semi-discrete optimal
transport. Among the several algorithms proposed to solve
semi-discrete optimal transport problems, one currently needs to
choose between algorithms that are slow but come with a convergence
speed analysis
\cite{oliker1989numerical,caffarelli1999problem,kitagawa2014iterative}
or algorithms that are much faster in practice but which come with no
convergence guarantees
\cite{aurenhammer1998minkowski,merigot2011multiscale,de2012blue,levy2014numerical,de2014intersection}.
Algorithms of the first kind rely on coordinate-wise increments and
the number of iterations required to reach the solution up to an error
of $\eps$ is of order $N^3/\eps$, where $N$ is the number of Dirac
masses in the target measure. On the other hand, algorithms of the
second kind typically rely on the formulation of the semi-discrete
optimal transport problem as an unconstrained convex optimization
problem which is solved using a Newton or quasi-Newton method.

The purpose of this article is to bridge this gap between theory and
practice by introducing a damped Newton's algorithm which is
experimentally efficient and by proving the global convergence of this
algorithm with optimal rates. The main assumptions is that the cost
function satisfies a condition that appears in the regularity theory
for optimal transport (the Ma-Trudinger-Wang condition) and that 
the support of the source density is connected in a quantitative way
(it must satisfy a weighted Poincaré-Wirtinger inequality). In
\S\ref{intro:convergence}, we compare this algorithm and the
convergence theorem to previous computational approaches to optimal
transport.

\subsection{Semi-discrete optimal transport}
The source space is an open domain $\Omega$ of a $d$-dimensional
Riemannian manifold, which we endow with the measure $\Haus_g^d$ induced
by the Riemannian metric $g$ on the manifold.  The target space is
an (abstract) finite set $Y$. We are given a cost function $c$ on the
product space $\Omega \times Y$, or equivalently a collection
$(c(\cdot,y))_{y\in Y}$ of functions on $\Omega$. We assume that the
functions $c(\cdot,y)$ are of class $\Class^{1,1}$ on $\Omega$:
\begin{equation}
\forall y \in Y,~~ c(\cdot,y) \in \Class^{1,1}(\Omega) \tag{Reg} \label{eq:Reg}.
\end{equation}
Here $\Class^{n,\alpha}(\Omega)$ denotes the class of functions which are $n$-times differentiable and whose $n$-th derivatives are $\alpha$-H\"{o}lder continuous. In particular, $\Class^{0,\alpha}$ is the space of  $\alpha$-H\"{o}lder continuous functions.
% The space of probability densities over $X$ is denoted $\Probac(X)$
% while the finite-dimensional space of probability measures over $Y$ is
% $\Prob(Y)$.
We consider a compact subset $X$ of $\Omega$ and $\rho$ a probability
density on $X$, i.e. such that $\rho \dd \Haus^d$ is a probability
measure. By an abuse of notation, we will often conflate the density $\rho$ with the measure $\rho \dd \Haus^d$ itself.  
 Note that the support of $\rho$ is contained in $X$, but we
do not assume that it is actually equal to $X$. The push-forward of
$\rho$ by a measurable map $T:X\to Y$ is the finitely supported
measure $T_{\#}\rho = \sum_{y\in Y} \rho(T^{-1}(y)) \delta_y$. The map
$T$ is called a \emph{transport map} between $\rho$ and a probability
measure $\mu$ on $Y$ if $T_\#\rho = \mu$. The \emph{semi-discrete
  optimal transport problem} consists in minimizing the transport cost
over all transport maps between $\rho$ and $\mu$, that is
\begin{equation}
  \min \left\{ \int_{X} c(x,T(x)) \rho(x) \dd \Haus_g^{d}(x) \mid T:X\to Y \hbox{ s.t. } T_{\#} \rho = \mu \right\}. \tag{M}
\label{eq:M}
\end{equation}
This problem is an instance of Monge's optimal transport problem,
where the target measure is finitely supported.
%$$\forall y\in Y, \rho(T^{-1}(\{y\})) = \mu_y$$
Kantorovich proposed a relaxed version of the problem \eqref{eq:M} as
an infinite dimensional linear programming problem over the space of
probability measures with marginals $\rho$ and $\mu$.

\subsection{Laguerre tessellation and economic interpretation}
\begin{figure}
    \begin{center}
      \includegraphics[width=\textwidth]{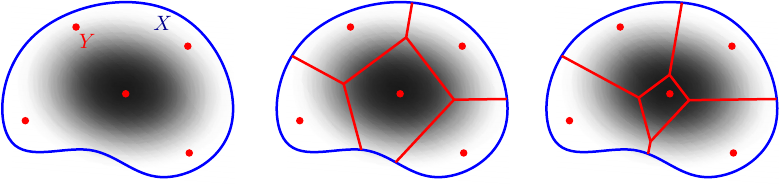}
      \caption{(Left) The domain $X$ (with boundary in blue) is
        endowed with a probability density pictured in grayscale
        representing the density of population in a city. The set $Y$
        (in red) represents the location of bakeries. Here,
        $X,Y\subseteq \Rsp^2$ and $c(x,y) = |x-y|^2$ (Middle) The
        Voronoi tessellation induced by the bakeries (Right) The
        Laguerre tessellation: the price of bread the bakery near the
        center of $X$ is higher than at the other bakeries,
        effectively shrinking its Laguerre cell.\label{fig:laguerre}}
      \end{center}
\end{figure}
In the semi-discrete setting, the dual of Kantorovich's relaxation can
be conveniently phrased using the notion of Laguerre tessellation. We
start with an economic metaphor. Assume that the probability density
$\rho$ describes the population distribution over a large city $X$, and that the
finite set $Y$ describes the location of bakeries in the
city. Customers living at a location $x$ in $X$ try to minimize the
walking cost $c(x,y)$, resulting in a decomposition of the space called a
Voronoi tessellation. The number of customers received by a bakery $y\in Y$
is equal to the integral of $\rho$ over its Voronoi cell, namely
$$\Vor_y := \{ x\in \Omega\mid\forall z \in Y, c(x,y) \leq
c(x,z)  \}. $$ If the price of bread is given by a function
$\psi: Y\to\Rsp$, customers living at location $x$ in $X$ make a
compromise between walking cost and price by minimizing the sum
$c(x,y) + \psi(y)$. This leads to the notion of Laguerre tessellation,
whose cells are given by
\begin{equation}
  \Lag_y(\psi) := \{ x\in \Omega \mid \forall z \in Y, c(x,y) + \psi(y) \leq 
  c(x,z) + \psi(z) \}.
\end{equation}
% Given distinct points $y_1,\hdots,y_k$ in $Y$, we denote
% \begin{equation}
%   \Lag_y(\psi;y_0,\hdots,y_k) = \bigcap_{i=1}^N \Lag^\psi(y_i).
% \end{equation}
When the sets $X$ and $Y$ are contained in $\Rsp^d$ and the cost is the squared
Euclidean distance, the computation of the Laguerre tessellation is a
classical problem of computational geometry, for which there exists
very efficient softwares, such as CGAL~\cite{cgal} or Geogram~\cite{geogram}. For other cost functions, one has to adapt the algorithms, as was done for the reflector cost on the sphere in~\cite{de2014intersection}. The shape of the Voronoi and Laguerre tessellations
is depicted in Figure~\ref{fig:laguerre}.

We want the Laguerre cells to form a partition of $\Omega$ up to a
negligible set. By the implicit function theorem, this will be the
case if the  following \emph{twist condition} holds,
\begin{equation}
\forall x\in X,~~ y\in Y\mapsto D_x c(x,y)\in T^*_x\Omega \hbox{ is injective,} 
\tag{Twist} \label{eq:Tw}
\end{equation}
where $D_x$ denotes differentiation with respect to the first variable. The twist condition implies that for any prices
$\psi$ on $Y$, the transport map induced by the Laguerre tessellation
\begin{equation}
 T_\psi(x) := \arg\min_{y \in Y} (c(x,y) + \psi(y)), \label{eq:Tpsi}
\end{equation}
is uniquely defined almost everywhere. It is easy to see (see
Proposition~\ref{prop:OT}), that for any function $\psi$ on $Y$, the
map $T_\psi$ is an optimal transport map between $\rho$ and the
pushforward measure $T_{\psi \#} \rho = \sum_{y\in Y}
\rho(\Lag_y(\psi)) \delta_y.$

\subsection{Kantorovich's functional}The map $T_\psi$ is an optimal transport map
between $\rho$ and  $T_{\psi \#} \rho$. Conversely, Theorem~\ref{th:Aurenhammer} below ensures that any
semi-discrete optimal transport problem admits such a solution. In
other words, for any probability  density $\rho$ on $X$ and any probability measures $\mu$ on $Y$ there exists a
function (price) $\psi$ on $Y$ such that $T_{\psi \#}\rho =\nu$.  The
proof of this theorem is an easy generalization of the proof given in
\cite{aurenhammer1998minkowski} for the quadratic cost, but it is
nonetheless included in Section~\ref{sec:Aurenhammer} for the sake of
completeness.

Here and after, we denote $(\one_y)_{y\in Y}$ the canonical basis of
$\Rsp^Y$, and $\nr{\cdot}$ the Euclidean norm induced by this basis, while $\nr{\cdot}_{g}$ will denote the norm induced by the Riemannian metric $g$ on either $T_x\Omega$ or $T^*_x\Omega$ (which will be clear from context).  We
will, slightly abusively, consider the space of probability measures 
$\Prob(Y)$ as a subset of $\Rsp^Y$.

\begin{theorem}
  \label{th:Aurenhammer} Assume \eqref{eq:Reg} and \eqref{eq:Tw}, let $\rho$ be a bounded probability
  density on $X$ and $\nu = \sum_{y\in Y} \nu_y\one_y$ in
  $\Prob(Y)$. Then, the functional $\Phi$
\begin{align}
  \Phi(\psi) &:= \int_{X} (\min_{y\in Y} c(x,y) + \psi(y))\rho(x)
 \dd\Haus_g^{d}(x) - \sum_{y\in Y}\psi(y)\nu_y \notag \\
&= \sum_{y\in Y} \int_{\Lag_y(\psi)} (c(x,y) + \psi(y)) \rho(x) \dd\Haus_g^d(x) - \sum_{y\in Y}\psi(y)\nu_y
\label{eq:Phi}
\end{align}
is concave, $\Class^1$-smooth, and its gradient is
\begin{equation}
  \nabla \Phi(\psi) = \sum_{y\in Y} (\rho(\Lag_y(\psi)) - \nu_y) \one_y.
\label{eq:GradPhi}
\end{equation}
\end{theorem}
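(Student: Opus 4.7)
The plan is to establish, in order, the equality of the two expressions for $\Phi$, concavity, the gradient formula, and $\Class^1$-smoothness. The twist condition (Tw) will only be needed to guarantee that the Laguerre cell boundaries are negligible.

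Concavity is almost immediate and does not use (Tw): for each fixed $x\in X$, $\psi\mapsto \min_{y\in Y}(c(x,y)+\psi(y))$ is the pointwise minimum of finitely many functions affine in $\psi$, hence concave. Integration against the nonnegative measure $\rho\dd\Haus_g^d$ and subtraction of $\sca{\psi}{\nu}$ preserve concavity. The equality of the two expressions for $\Phi$ in~\eqref{eq:Phi} will follow once we show that the cells $\Lag_y(\psi)$ tile $\Omega$ up to a $\Haus_g^d$-null set.

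The technical heart of the proof is to show that for $y\neq y'$, the bisector
\[
H_{y,y'}(\psi) := \{x\in\Omega : c_y(x)-c_{y'}(x)=\psi(y')-\psi(y)\}
\]
has vanishing $\Haus_g^d$-measure. Indeed, by (Tw) we have $\D(c_y-c_{y'})(x)\neq 0$ for every $x\in X$, and combining this with the $\Class^{1,1}$-regularity from (Reg), the implicit function theorem presents $H_{y,y'}(\psi)$ locally as a $\Class^1$ hypersurface, whose $d$-dimensional Hausdorff measure vanishes. Three consequences follow: each $\partial\Lag_y(\psi)$ is $\Haus_g^d$-negligible, so the Laguerre cells form a measurable partition of $\Omega$ up to a null set; the minimizer in $\min_z(c(x,z)+\psi(z))$ is $\Haus_g^d$-a.e.\ unique; and the transport map $T_\psi$ is well-defined $\Haus_g^d$-a.e.

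For the gradient, fix $y\in Y$ and consider the difference quotient
\[
\frac{\Phi(\psi+t\one_y)-\Phi(\psi)}{t} = \int_X \frac{m_{\psi+t\one_y}(x)-m_\psi(x)}{t}\rho(x)\dd\Haus_g^d(x) - \nu_y,
\]
where $m_\psi(x):=\min_z(c(x,z)+\psi(z))$. For $\Haus_g^d$-a.e.\ $x$ the unique minimizer $T_\psi(x)$ remains unchanged for small $\abs{t}$, so $m_{\psi+t\one_y}(x)-m_\psi(x)$ equals $t$ when $T_\psi(x)=y$ and $0$ otherwise, and the difference quotient is bounded by $1$ in absolute value. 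Dominated convergence (majorizing by $\rho\in L^1$) then yields $\partial_{\psi(y)}\Phi(\psi)=\rho(\Lag_y(\psi))-\nu_y$, which is~\eqref{eq:GradPhi}. To upgrade this to $\Class^1$-regularity we check that $\psi\mapsto\rho(\Lag_y(\psi))$ is continuous: if $\psi_n\to\psi$, then every $x$ outside $\bigcup_{y'\neq y}H_{y,y'}(\psi)$ lies strictly inside or outside $\Lag_y(\psi)$, so $\one_{\Lag_y(\psi_n)}(x)\to\one_{\Lag_y(\psi)}(x)$, and one last dominated-convergence argument concludes. The main obstacle throughout is the negligibility of the bisectors $H_{y,y'}(\psi)$; once that is in hand, the rest is a routine envelope-theorem computation.
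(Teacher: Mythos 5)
Your proof is correct, but it takes a genuinely different route from the paper's. You establish the gradient by computing the difference quotient $t^{-1}(\Phi(\psi+t\one_y)-\Phi(\psi))$ directly: the pointwise integrand converges a.e.\ (since the minimizer $T_\psi(x)$ is locally constant in $\psi$ for a.e.\ $x$) and is bounded by $1$, so dominated convergence gives $\partial_{\psi(y)}\Phi = \rho(\Lag_y(\psi))-\nu_y$; a second dominated-convergence argument on indicator functions then gives continuity of this partial derivative, whence $\Class^1$. The paper instead proceeds via convex analysis: from $\min_y(c(x,y)+\psi(y))\leq c(x,T(x))+\psi(T(x))$ one derives, by integrating and specializing to $T=T_\phi$, the inequality $\Phi(\psi)\leq\Phi(\phi)+\sca{G(\phi)-\nu}{\psi-\phi}$, exhibiting $G(\phi)-\nu$ as a supergradient at every $\phi$; this gives concavity at once, and since $G$ is continuous (Proposition~\ref{prop:Gcont}, itself proved by a dominated-convergence-type argument akin to yours), the classical fact that a concave function admitting a continuous selection of supergradients is $\Class^1$ with that selection as its gradient finishes the proof. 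Both arguments ultimately rest on the negligibility of the bisectors $\{c_y-c_z=\const\}$; you derive it from the implicit function theorem using the full \eqref{eq:Tw} and \eqref{eq:Reg}, while the paper isolates the weaker hypotheses \eqref{eq:Regprime} and \eqref{eq:Twprime} that suffice and proves the more general Theorem~\ref{th:Aurenhammerbis}. The paper's route buys that extra generality and sidesteps the pointwise analysis of $m_\psi$; your route is more elementary and self-contained, at the cost of appealing to the implicit function theorem and of not making the minimal hypotheses visible.
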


\begin{corollary} The following statements are equivalent:
\begin{itemize}
\item[(i)] $\psi: Y\to \Rsp$ is a global maximizer of $\Phi$ ;
\item[(ii)] $T_\psi$ is an optimal transport map between $\rho$ and
  $\nu$ ;
\item[(iii)] $T_{\psi \#} \rho = \nu$, or equivalently, 
\begin{equation}
\forall y\in Y,~\rho(\Lag_y(\psi)) = \nu_y \tag{MA} \label{eq:MA}
\end{equation}
\end{itemize}
\end{corollary}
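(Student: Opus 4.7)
The plan is to break the three-way equivalence into the two halves (i) $\Leftrightarrow$ (iii) and (ii) $\Leftrightarrow$ (iii), each of which follows quickly from the material already assembled. I anticipate no serious difficulty, since the corollary is essentially a repackaging of the first-order optimality condition for the concave functional $\Phi$ together with the optimality of Laguerre-induced transport maps asserted in the discussion around \eqref{eq:Tpsi}.

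For (i) $\Leftrightarrow$ (iii), I will invoke Theorem~\ref{th:Aurenhammer}, which guarantees that $\Phi$ is concave and of class $\Class^1$ on $\Rsp^Y$. A concave $\Class^1$ function on $\Rsp^Y$ attains its global maximum at $\psi$ if and only if $\nabla\Phi(\psi)=0$, and by the explicit formula \eqref{eq:GradPhi} this vanishing is equivalent to $\rho(\Lag_y(\psi)) = \nu_y$ for every $y\in Y$, i.e.\ exactly to \eqref{eq:MA}. A mild subtlety is that $\Phi$ is invariant under adding a global constant to $\psi$, so maximizers are only unique up to this one-dimensional freedom; this does not affect the equivalence since \eqref{eq:GradPhi} is likewise invariant.

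For (ii) $\Rightarrow$ (iii), I will note that by the very definition of a transport map, $T_\psi$ being a transport map between $\rho$ and $\nu$ means exactly $T_{\psi \#}\rho=\nu$, so this direction is automatic. For the reverse direction (iii) $\Rightarrow$ (ii), I will appeal to Proposition~\ref{prop:OT}, announced in the text just after \eqref{eq:Tpsi}, which states that $T_\psi$ is an optimal transport map between $\rho$ and its own pushforward $T_{\psi \#}\rho$; assuming \eqref{eq:MA} this pushforward coincides with $\nu$, so $T_\psi$ is indeed an optimal transport map between $\rho$ and $\nu$.

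The main point that requires any vigilance is simply the standard fact that a $\Class^1$ concave function on a finite-dimensional vector space is maximized exactly at its critical points, which I would cite as a routine consequence of the subgradient inequality. Apart from that, the proof is just a chaining of Theorem~\ref{th:Aurenhammer} and Proposition~\ref{prop:OT}, and I expect no genuine obstacle.
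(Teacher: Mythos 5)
Your proposal is correct and follows exactly the route the paper intends: the paper states the Corollary without an explicit proof precisely because it is an immediate consequence of Theorem~\ref{th:Aurenhammer} (concavity, $\Class^1$-regularity, and the gradient formula \eqref{eq:GradPhi}) together with Proposition~\ref{prop:OT}. The decomposition into (i)~$\Leftrightarrow$~(iii) via first-order optimality for the concave functional and (ii)~$\Leftrightarrow$~(iii) via Proposition~\ref{prop:OT} is the natural and essentially unique chaining of the available results, and your treatment of the constant-shift invariance is the right observation to make.
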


We call \emph{Kantorovich's functional} the function $\Phi$ introduced
in \eqref{eq:Phi}. Note that both this functional and its gradient are
invariant by addition of a constant.  The non-linear equation
\eqref{eq:MA} can be considered as a discrete version of the
generalized Monge-Ampère equation that characterizes the solutions to
optimal transport problems (see for instance Chapter~12 in
\cite{villani2009optimal}).

\subsection{Damped Newton algorithm}\label{subsection:algo}
We consider a simple damped Newton's algorithm to solve semi-discrete
optimal transport problem.  This algorithm is very close to the one
used by Mirebeau in \cite{mirebeau2015discretization}. To phrase this algorithm in a more general way, we
introduce a notation for the measure of Laguerre cells: for $\psi \in
\Rsp^Y$ we set
 \begin{equation}
   \Gglobal(\psi) := \sum_{y \in Y} G_y(\psi) \one_y \hbox{ where } G_y(\psi) = \rho(\Lag_y(\psi)),
 \end{equation}
so that $\nabla \Phi(\psi) = \Gglobal(\psi) - \nu$.  %% Since Kantorovich's
%% functional $\Phi$ is invariant by addition of a constant, its Hessian
%% $\D^2\Phi = \D \Gglobal$ is not invertible.
In the algorithm (Algorithm \ref{algo:newton}), we denote by $A^+$ the
\emph{pseudo-inverse} of the matrix $A$.

\begin{algorithm}
\begin{description}
  \item[Input] A tolerance $\eta > 0$ and an initial $\psi_0\in
    \Rsp^Y$ such that
\begin{equation}\label{eq:nonzerocells}
  \eps_0 :=  \frac{1}{2} 
  \min\left[\min_{y\in Y} G_y(\psi_0),~ \min_{y\in Y} \mu_y\right] >  0.
\end{equation}
\item[While] $\nr{G_y(\psi_k) - \mu_y}  \geq \eta$
  \begin{description}
\item[Step 1] Compute $d_{k} = - \D \Gglobal(\psi_k)^{+} (\Gglobal(\psi_k) - \mu)$
\item[Step 2] Determine the minimum $\ell \in \Nsp$ such that $\psi_k^\ell :=
  \psi_k + 2^{-\ell} d_k$ satisfies
\begin{equation*}
\left\{
\begin{aligned}
&\min_{y\in Y} G_y(\psi_k^\ell) \geq \eps_0 \\
&\nr{\Gglobal(\psi_k^\ell) - \mu} \leq (1-2^{-(\ell+1)}) \nr{\Gglobal(\psi_k) - \mu}
\end{aligned}
\right.
\end{equation*}
\item[Step 3] Set $\psi_{k+1} = \psi_k + 2^{-\ell}  d_k$ and $k\gets k+1$.
  \end{description}
\end{description}
\label{algo:newton}
\caption{Simple damped Newton's algorithm}
\end{algorithm}

The goal of this article is to prove the global convergence of this
damped Newton algorithm and to establish estimates on the speed of
convergence. As shown in Proposition~\ref{prop:newton}, the
convergence of Algorithm~\ref{algo:newton} depends on the regularity
and strong monotonicity of the map $\Gglobal = \nabla \Phi$. As we will see,
the regularity of $\Gglobal$ will depend mostly on the geometry of the cost
function and the regularity of the density. On the other hand, the
strong monotonicity of $\Gglobal$ will require a strong connectedness
assumption on the support of $\rho$, in the form of a weighted
Poincaré-Wirtinger inequality. Before stating our main theorem we give
some indication about these intermediate regularity and monotonicity
results and their assumptions.

%--------------------------------
\subsection{Regularity of Kantorovich's functional and MTW condition}
In order to establish the convergence of a damped Newton algorithm for
\eqref{eq:MA}, we need to study the $\Class^{2,\alpha}$ regularity of
Kantorovich's functional $\Phi$. However, while $\Class^1$ regularity
of $\Phi$ follows rather easily from the \eqref{eq:Tw} hypothesis (or
even from weaker hypothesis, see Theorem~\ref{th:Aurenhammerbis}),
higher order regularity seems to depend on the geometry of the cost
function in a more subtle manner. We found that a sufficient condition
for the regularity of $\Phi$ is the Ma-Trudinger-Wang condition
\cite{ma2005regularity}, which appeared naturally in the study of the
regularity of optimal transport maps. We use a  discretization
of Loeper's geometric reformulation of the Ma-Trundinger-Wang
condition \cite{loeper2009regularity}.

\begin{definition}[Loeper's condition]\label{def:Loeper}
  The cost $c$ satisfies Loeper's condition if for every $y$ in $Y$
  there exists a convex open subset $\Omega_y$ of $\Rsp^d$ and a $\Class^{1,1}$
  diffeomorphism $\exp^c_y: \Omega_y \to \Omega$ such that the functions
\begin{equation}
  p \in \Omega_y \mapsto c(\exp^c_y p, y) - c(\exp^c_y p,z) 
\tag{QC} \label{eq:MTW}
\end{equation}
are quasi-convex for all $z$ in $Y$. The map $\exp^c_y$ is called the
\emph{$c$-exponential} with respect to $y$, and the domain $\Omega_y$
is an \emph{exponential chart}.
\end{definition}
%Note that this definition does not provide any $c$-exponential
%  map. In practice, when $Y$ is a finite subset of a continuous space,
%  the $c$-exponential map $\exp^c_y$ is the one that is commonly
%  defined in optimal transport (see Remarks~\ref{rem:mtw} and \ref{rem:MTWremark} for more details).
  
  We comment here, that when $Y$ is a finite subset of a continuous space and $c$ satisfies conditions \eqref{eq:Reg}, \eqref{eq:Tw}, and \eqref{A3w} (which can be found on pages \pageref{eq:Reg}, \pageref{eq:Tw}, and \pageref{A3w} respectively), the $c$-exponential map defined in the usual sense in optimal transport theory (see Remarks~\ref{rem:mtw} and \ref{rem:MTWremark}) will satisfy what we call  Loeper's condition above. However, it will become apparent that for our purposes what is essential is the above quasi-convexity property and not the actual definition of $\exp^c_y$. Thus we will elect to use the notation $\exp^c_y$ even in cases when $Y$ is not a finite subset of a continuous space. 

\begin{definition}[$c$-Convexity] Assuming Loeper's condition,
  a subset $X$ of $\Omega$ is  \emph{$c$-convex with respect to
      a point $y$ of $Y$} if its inverse image $(\exp_y^{c})^{-1}(X)$
    is convex. The subset $X$ is said to be \emph{$c$-convex} if it is
    \emph{$c$-convex} with respect to every point $y$ in $Y$.
  \end{definition}
Note that by assumption, the domain $\Omega$ itself is $c$-convex.
The connection between this discrete version of Loeper's condition and
the conditions used in the regularity theory for optimal transport is
detailed in Remark~\ref{rem:mtw}.  The \eqref{eq:MTW} condition
implies the convexity of each Laguerre cell in its own exponential
charts, namely $(\exp_y^{c})^{-1}(\Lag_y(\psi))$ is convex for
  every $y$ in $Y$.  This plays a crucial role in the regularity of
Kantorovich's functional.

\begin{theorem}
  \label{th:Regularity}
  Assume \eqref{eq:Reg}, \eqref{eq:Tw}, and \eqref{eq:MTW}. Let $X$ be a
  compact, $c$-convex subset of $\Omega$ and let $\rho$ in
  $\Probac(X) \cap \Class^{0,\alpha}(X)$ for $\alpha$ in $(0,1]$. Then,
    Kantorovich's functional is of class  $\Class^{2,\alpha}_\loc$ on the
    set
\begin{equation} \mathcal{K}^+ := \{ \psi: Y \to \Rsp \mid \forall y \in Y,~
  \rho(\Lag_y(\psi)) > 0 \},\label{eq:Kplus}
  \end{equation} and its Hessian is given by
\begin{align}
(z\neq y)\qquad&\frac{\partial^2 \Phi}{\partial \one_y \partial \one_z}(\psi) = \int_{\Lag_y(\psi) \cap \Lag_z(\psi)} \frac{\rho(x)}{\nr{D_x c(x,y) - D_x c(x,z)}_{g}} \dd\Haus_g^{d-1}(x),\notag\\
&\frac{\partial^2 \Phi}{\partial \one_y^2}(\psi) = -\sum_{z\in Y\setminus \{y\}}
\frac{\partial^2 \Phi}{\partial \one_y \partial \one_z}. \label{eq:Hess}
\end{align}
\end{theorem}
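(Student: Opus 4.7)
The strategy is to differentiate the gradient formula \eqref{eq:GradPhi} directly, using the quasi-convexity provided by \eqref{eq:MTW} to obtain the geometric control needed for H\"older regularity of the resulting expression. The proof splits naturally into three steps: (i) geometry of Laguerre cells in exponential charts; (ii) pointwise computation of the Hessian entries by a Hadamard-type domain variation argument; (iii) upgrading continuity to $\Class^{0,\alpha}_\loc$.

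\emph{Step 1: geometry in charts.} Fix $y \in Y$ and work in $\Omega_y$ via $\exp^c_y$. Condition \eqref{eq:MTW} says that $p \mapsto c(\exp^c_y p, y) - c(\exp^c_y p, z)$ is quasi-convex on $\Omega_y$, so its sublevel set $\{p \in \Omega_y : c(\exp^c_y p, y) - c(\exp^c_y p, z) \leq \psi(z) - \psi(y)\}$ is convex. Intersecting over $z \neq y$, the preimage $(\exp^c_y)^{-1}(\Lag_y(\psi))$ is convex, and each interface $\Lag_y(\psi) \cap \Lag_z(\psi)$ corresponds in the chart to a subset of a face of this convex body, lying on the $\Class^{1,1}$ hypersurface $\{c(\cdot,y) - c(\cdot,z) = \psi(z) - \psi(y)\}$. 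Moreover \eqref{eq:Tw} together with the compactness of $X$ furnishes a positive lower bound on $\nr{Dc_y - Dc_z}_g$ over $X$ for every $y \neq z$, so the denominator in \eqref{eq:Hess} is nondegenerate.

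\emph{Step 2: computing the Hessian.} For $z \neq y$, perturb $\psi$ to $\psi + \delta \one_z$. This relaxes only the defining inequality $c(x,y) + \psi(y) \leq c(x,z) + \psi(z)$ of $\Lag_y(\psi)$, so $\Lag_y$ absorbs a thin layer of $\Lag_z$ along their common interface. By the implicit function theorem applied to $c(x,y) - c(x,z) = \psi(z) - \psi(y)$, this layer has width $\delta/\nr{Dc_y(x) - Dc_z(x)}_g + o(\delta)$ measured in the direction normal to the interface. Integrating $\rho$ against this infinitesimal layer yields the off-diagonal formula in \eqref{eq:Hess}. The diagonal formula follows from the translation invariance of $G_y$: adding a constant to all values of $\psi$ leaves every Laguerre cell unchanged, so $\sum_{z \in Y} \partial G_y / \partial \psi(z) = 0$.

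\emph{Step 3 and main obstacle: H\"older regularity.} Fix a compact set $K \subset \mK^+$. On $K$, all cells have uniformly positive mass, which, combined with the convexity in charts from Step 1, yields the key quantitative stability: the convex bodies $(\exp^c_y)^{-1}(\Lag_y(\psi))$ and their faces (the interfaces $\Lag_y(\psi) \cap \Lag_z(\psi)$) depend Lipschitz-continuously on $\psi$ in Hausdorff distance, with uniformly bounded $(d-1)$-Hausdorff measure. Under such a Lipschitz deformation of the surface of integration, the $\Class^{0,1}$ factor $1/\nr{Dc_y - Dc_z}_g$ varies Lipschitz in $\psi$, while the $\Class^\alpha$ density $\rho$ varies with modulus $\abs{\psi - \psi'}^\alpha$. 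Combining these yields an $\alpha$-H\"older estimate on each Hessian entry, hence $\Phi \in \Class^{2,\alpha}_\loc(\mK^+)$. This stability step is the main technical obstacle: without \eqref{eq:MTW} the interface components may split, merge, or appear and disappear abruptly as $\psi$ varies, precluding any quantitative control, and convexity in the charts is precisely what rules out these pathologies.
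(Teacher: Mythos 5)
The overall architecture is right: you correctly identify that \eqref{eq:MTW} gives convexity of the cells in the $c$-exponential charts, that the off-diagonal Hessian entries arise from an infinitesimal layer of width $\delta/\nr{D c_y - D c_z}_g$ at the interface, and that the diagonal entry follows from translation invariance of $G_y$. All of this matches the paper's route (Sections~\ref{section:localregularity} and~\ref{sec:Regularity}, passing through Theorem~\ref{th:LocReg}).

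However, Step~3 has a genuine gap where the paper does most of its work. You assert that, on a compact $K\subset\K^+$, the faces $\Lag_y(\psi)\cap\Lag_z(\psi)$ ``depend Lipschitz-continuously on $\psi$ in Hausdorff distance, with uniformly bounded $(d-1)$-Hausdorff measure,'' and deduce an $\alpha$-H\"older bound on each Hessian entry. This inference does not go through: Hausdorff-distance stability of a face plus H\"older regularity of the integrand does not control the variation of $\int_{\text{face}}\sigma\,\d\Haus^{d-1}$. When $\psi$ is perturbed, a face not only shifts (which the modulus of continuity of $\sigma$ controls) but also exchanges strips of $(d-1)$-dimensional mass with neighboring faces near the $(d-2)$-dimensional edge set. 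The $\Haus^{d-1}$-measure of this exchanged region is what must be shown to be $\BigO(\abs{\psi-\psi'})$, and neither Hausdorff stability nor a bound on total face area implies this -- a near-tangential pair of facets could swap a region much larger than the Hausdorff displacement.

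Moreover, ``convexity in the charts'' alone does not rule out the pathology: you need a \emph{quantitative transversality} estimate, which is where the twist condition \eqref{eq:Tw} and the mass lower bound $\rho(\Lag_y(\psi))\geq\eps$ both enter nontrivially. The paper establishes it in two halves: Proposition~\ref{prop:explicitlowerbound} (via Lemma~\ref{lemma:lbvw}) uses the volume lower bound and convexity to show two normals at a boundary point cannot be near-opposite; Proposition~\ref{prop:explicitupperbound1} uses the lower bound $\eps_{nd}$ on $\nr{\nabla f_i-\nabla f_j}$ (coming from \eqref{eq:Tw}) together with a generalized Wielandt inequality (Lemma~\ref{lemma:wiedlandt}) and a normal-cone pushforward lemma (Lemma~\ref{lem:imageofconvexsetnormal}) to show they cannot be near-parallel either. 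These feed into Lemma~\ref{lem:HausSigma} (via Proposition~\ref{prop:hug}) to bound $\Haus^{d-2}$ of the singular edge set, which is what makes the coarea estimate in Proposition~\ref{prop:PDUC} (in particular the bound on the ``bad'' set $B_t$) close. To complete your Step~3 you would need to produce, or cite, this transversality estimate and the resulting $(d-2)$-measure bound; the current argument only names the difficulty rather than resolving it.
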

 The proof of this theorem and a more precise statement are given in
 Section~\ref{sec:Regularity} (Theorem~\ref{th:StrongRegularity}),
 showing that the $\Class^{2,\alpha}$ estimate can be made uniform
 when the mass of the Laguerre cells is bounded from below by a
 positive constant.

 \begin{remark}  \label{rem:mtw}
We remark that under certain assumptions on the cost $c$, our
\eqref{eq:MTW} condition is implied by classical conditions introduced
in a smooth setting by X.-N. Ma, N. Trudinger, and X.-J. Wang
\cite{ma2005regularity}, which include the well known (MTW) or (A3)
condition. See Remark \ref{rem:MTWremark} for more specifics.

There are a wide variety of known examples satisfying these
conditions. Aside from the canonical example of the inner product on
$\Rsp^n\times \Rsp^n$, and other costs on Euclidean spaces mentioned
in \cite{ma2005regularity, trudingerwang2009}, there are the nonflat
examples of Riemannian distance squared and
$-\log\nr{x-y}_{\Rsp^{n+1}}$ on (a subset of) $\mathbb{S}^n\times
\mathbb{S}^n$ (see \cite{loeper2011}). The last cost is associated to
the \emph{far-field reflector antenna problem}. We refer the reader to
\cite[p. 1331]{kimkitagawa14} for a (more) comprehensive list of such
costs.
\end{remark}

%   \begin{lemma} Assume \eqref{eq:MTW}. For any function $\phi$ on $Y$,
%     and any point $y$, the set $\exp_y^{-1}(\Lag_y(\psi))$ is convex.
% \end{lemma}

% \begin{proof} Let $K = \exp^c_y^{-1} \Lag^\psi(y)$. A vector $v \in M_y$
%   belongs to $K$ iff for all $z \neq y$, $c(\exp_y v, y) - c(\exp_y
%   v,z) \leq \psi(z) - \psi(y)$. Thus, the set $K$ is convex as an
%   intersection of sublevel sets of quasi-convex functions.
% \end{proof}

\subsection{Strong concavity of Kantorovich's functional}
As noted earlier, Kantorovich's functional $\Phi$ cannot be strictly
concave, since it is invariant under addition of a constant. This
implies that the Hessian $\D^2\Phi$ has a zero eigenvalue
corresponding to the constants. A more serious obstruction to the
strict concavity of $\Phi$ at a point $\psi$ arises when the discrete
graph induced by the Hessian (where two points are connected iff
$\partial^2\Phi/\partial \one_y \partial \one_z(\psi) \neq 0$) is not
connected. This can happen either because one of the Laguerre cells is
empty (hence not connected to any neighbor) or if the support of the
probability density $\rho$ is itself disconnected. In order to avoid
the latter phenomena, we will require that $(X,\rho)$ satisfies a
weighted $\LL^1$ Poincaré-Wirtinger inequality.

\begin{definition}[weighted Poincaré-Wirtinger]
  A continuous probability density $\rho$ on a compact set $X\subseteq
  \Omega$ satisfies a weighted Poincar\'e-Wirtinger inequality with
  constant $\Cpw>0$ if for every $\Class^1$ function $f$ on $X$,
\begin{equation}\label{eq:L1-poincare}
\nr{f-\Exp_\rho(f)}_{L^1(\rho)} \leq \Cpw \nr{\nabla f}_{L^1(\rho)},\tag{PW}
\end{equation}
where $\nr{h}_{L^1(\rho)}:=\int_X |h(x)|\rho(x)\d\Haus_g^d(x)$ and
$\Exp_\rho(f):=\int_X f(x)\rho(x)\d\Haus_g^d(x)$.
\end{definition}

We denote $E_Y$ the orthogonal complement (in $\Rsp^Y$) of the space
of constant functions on $Y$, that is $E_Y := \{ \psi \in \Rsp^Y \mid
\sum_y \psi(y) = 0 \}.$ As before, $\K^+$ is the set of functions
$\psi$ whose Laguerre cells all have positive mass.

\begin{theorem} Assume \eqref{eq:Reg}, \eqref{eq:Tw}, and \eqref{eq:MTW}. Let $X$ be a
  compact, $c$-convex subset of $\Omega$, and $\rho$ be a continuous
  probability density on $X$ satisfying \eqref{eq:L1-poincare}. Then,
  Kantorovich's functional $\Phi$ is strictly concave on $E_Y \cap
  \K^+$. \label{th:StrictConcavity}
\end{theorem}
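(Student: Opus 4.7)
The plan is to identify $-\DD^2\Phi(\psi)$ with a weighted graph Laplacian on $Y$ and to use \eqref{eq:L1-poincare} to force the corresponding graph to be connected at every $\psi \in \K^+$. Indeed, using the Hessian formula \eqref{eq:Hess}, for every $v \in \Rsp^Y$,
\[
-\sca{v}{\DD^2 \Phi(\psi) v} = \frac{1}{2}\sum_{y\neq z} H_{yz}(\psi)(v_y - v_z)^2,
\]
where $H_{yz}(\psi) := \int_{\Lag_y(\psi)\cap\Lag_z(\psi)} \rho(x)/\nr{\DD c_y(x) - \DD c_z(x)}_g \dd \Haus^{d-1}_g(x) \ge 0$. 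Let $G_\psi$ denote the graph on the vertex set $Y$ whose edges are the pairs $\{y,z\}$ with $H_{yz}(\psi) > 0$. The quadratic form above vanishes exactly on vectors that are constant on each connected component of $G_\psi$, and because the only constant element of $E_Y$ is $0$, strict concavity on $E_Y \cap \K^+$ will follow as soon as $G_\psi$ is connected for every $\psi \in \K^+$.

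Suppose for the sake of contradiction that at some $\psi \in \K^+$ the graph $G_\psi$ splits as $Y = Y_1 \sqcup Y_2$ with both classes nonempty and $H_{y_1 y_2}(\psi) = 0$ whenever $y_i \in Y_i$, and set $A_i := \bigcup_{y \in Y_i} \Lag_y(\psi)$. Since $\psi \in \K^+$ one has $\rho(A_1)\rho(A_2) > 0$, and $X = A_1 \cup A_2$ up to a $\rho$-null set. By \eqref{eq:Reg} the quantity $\nr{\DD c_y - \DD c_z}_g$ is uniformly bounded above on the compact set $X$, so the vanishing $H_{y_1 y_2}(\psi) = 0$ forces $\rho \equiv 0$ $\Haus^{d-1}_g$-a.e.\ on the interface $\Lag_{y_1}(\psi) \cap \Lag_{y_2}(\psi)$. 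Continuity of $\rho$ upgrades this to pointwise vanishing of $\rho$ on $\partial A_1 \cap \inte(X)$, outside the $\Haus^{d-1}_g$-negligible skeleton where three or more Laguerre cells meet.

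To reach a contradiction with \eqref{eq:L1-poincare}, I feed it the Lipschitz cutoff $f_\eps(x) := \min\{1,\, \eps^{-1} \mathrm{dist}_g(x, A_2)\}$, which converges $\rho$-a.e.\ to $\one_{A_1}$. Dominated convergence gives $\nr{f_\eps - \Exp_\rho(f_\eps)}_{L^1(\rho)} \to 2\rho(A_1)\rho(A_2) > 0$, while $\abs{\nabla f_\eps} \le \eps^{-1}$ is supported in the tube $T_\eps := \{x \in A_1 : \mathrm{dist}_g(x, A_2) < \eps\}$. Extending \eqref{eq:L1-poincare} to Lipschitz functions by routine mollification yields
\[
\nr{f_\eps - \Exp_\rho(f_\eps)}_{L^1(\rho)} \le \Cpw \, \eps^{-1} \int_{T_\eps} \rho \dd \Haus^d_g,
\]
so the contradiction reduces to showing that $\eps^{-1}\int_{T_\eps}\rho \to 0$ as $\eps \to 0$.

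This last tube estimate is the principal obstacle. It combines a Weyl-type bound $\Haus^d_g(T_\eps) = O(\eps)$, available because each interface $\Lag_{y_1}(\psi)\cap\Lag_{y_2}(\psi)$ is a $\Class^{1,1}$ hypersurface (convex in the exponential chart $\Omega_{y_1}$ by \eqref{eq:MTW}), with a uniform-in-$\eps$ modulus-of-continuity estimate for $\rho$ near $\partial A_1 \cap \inte(X)$ extracted from its pointwise vanishing there. A small amount of additional bookkeeping handles the $(d-2)$-dimensional triple-junction skeleton and the portion of $\partial A_1$ touching $\partial X$, whose $\eps$-tubes have $\Haus^d_g$-measure $O(\eps^2)$ and thus contribute only $O(\eps)$ to the ratio.
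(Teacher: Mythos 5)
Your proposal takes a genuinely different route from the paper's: the paper obtains Theorem~\ref{th:StrictConcavity} as a corollary of the quantitative Theorem~\ref{th:UniformConcavity}, which identifies $-\D^2\Phi(\psi)$ with a weighted graph Laplacian and then chains the discrete Cheeger inequality (Theorem~\ref{th:cheeger}) with a bound of the discrete Cheeger constant by the continuous weighted Cheeger constant, which is positive by \eqref{eq:L1-poincare} via BV theory (Lemma~\ref{lem:PWtoCheeger}). Your contradiction argument has the right high-level skeleton — a split $Y=Y_1\sqcup Y_2$ forces $\rho$ to vanish on the interface, which is then played against \eqref{eq:L1-poincare} with a cutoff of $\mathrm{dist}_g(\cdot,A_2)$ — but the step you flag as ``the principal obstacle'' is where the argument has to touch exactly the geometry the paper's BV/Cheeger machinery handles, and your sketch of it contains real gaps.

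Concretely: (i) it is not automatic that $x\in A_1$ with $\mathrm{dist}_g(x,A_2)<\eps$ lies within $O(\eps)$ of $\bar{A}_1\cap\bar{A}_2$; you need the $c$-convexity of $X$ (push into an exponential chart, use that the segment from $\hat x$ to the nearest point of $\hat A_2$ stays in the convex set $\hat X$ and must cross $\hat A_1\cap\hat A_2$). You never invoke this, and the ``Weyl bound'' you cite controls the $\eps$-tube of the interface, not $T_\eps$ itself. (ii) Upgrading ``$\rho=0$ $\Haus^{d-1}_g$-a.e.\ on $\Lag_{y_1}\cap\Lag_{y_2}$'' to pointwise vanishing on the whole facet requires that every nonempty relatively open subset of the facet has positive $\Haus^{d-1}_g$-measure, which is not evident at lower-dimensional edges; and you then still need $\sup_{T_\eps}\rho\to 0$, which requires vanishing of $\rho$ on the closure of the entire interface, not just facet by facet. (iii) Your assertion that the $\eps$-tube of $\partial A_1\cap\partial X$ has $\Haus^d_g$-measure $O(\eps^2)$ is false as stated — this set can be $(d-1)$-dimensional with $\Theta(\eps)$ tube; what actually saves you, once (i) is in place, is that only the $(d-2)$-dimensional part where the interface meets $\partial X$ can contribute, but you have not shown this. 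Finally, note that the paper proves the quantitative bound because Proposition~\ref{prop:newton} needs an explicit $\kappa_\eps>0$; your qualitative proof, even if completed, would require a separate compactness argument on $\K^\eps\cap E_Y$ to recover that.
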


As before, a more quantitative statement is proven in
Section~\ref{sec:Concavity} (Theorem~\ref{th:UniformConcavity}),
establishing strong concavity of $\Phi$ assuming that the mass of the Laguerre cells is  bounded from below by a positive
constant.

\subsection{Convergence result} \label{intro:convergence}
Putting Proposition~\ref{prop:newton}, Theorem~\ref{th:Regularity} and
Theorem~\ref{th:StrictConcavity} together, we can prove the global
convergence of the damped Newton algorithm for semi-discrete
optimal transport (Algorithm~1) together with optimal convergence
rates.

\begin{theorem} Assume \eqref{eq:Reg}, \eqref{eq:Tw} and \eqref{eq:MTW} and also that
  \begin{itemize}
  \item[(i)] The support of the probability density $\rho$ is included
    in a compact, $c$-convex subset $X$ of $\Omega$, and $\rho
    \in\Class^{0,\alpha}(X)$ for $\alpha$ in $(0,1]$.
  \item[(ii)] $\rho$ has positive Poincaré-Wirtinger
    \eqref{eq:L1-poincare} constant.
  \end{itemize}
  Then the damped Newton algorithm for semi-discrete optimal transport
  (Algorithm~1) converges globally with linear rate and locally with
  rate $1+\alpha$.
  \label{th:main}
\end{theorem}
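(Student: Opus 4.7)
The plan is to apply the abstract convergence result for damped Newton schemes (Proposition~\ref{prop:newton}) to the map $\Gglobal = \nabla\Phi$, which requires verifying two hypotheses on $\Gglobal$ along the iterates: uniform $\Class^{1,\alpha}$ regularity, and uniform strong monotonicity on the orthogonal of the constants. The whole point of the construction of Algorithm~\ref{algo:newton} is that its backtracking rule forces the iterates to live in the ``safe'' set
\[
\mathcal{K}_{\eps_0}^+ := \{\psi \in \Rsp^Y \mid \min_{y\in Y} G_y(\psi) \geq \eps_0 \text{ and } \nr{\Gglobal(\psi) - \mu} \leq \nr{\Gglobal(\psi_0) - \mu}\},
\]
which is contained in $\mathcal{K}^+$. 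Quotienting by the line of constants (on which both $\Phi$ and $\Gglobal$ are invariant), I would first verify that the image of $\mathcal{K}_{\eps_0}^+$ in $\Rsp^Y/\Rsp\one$ is compact: indeed, if one coordinate $\psi(y)$ becomes much larger than the others, then the Laguerre cell $\Lag_y(\psi)$ empties out, contradicting $G_y(\psi)\geq\eps_0$.

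On this compact set, I would apply the quantitative version of Theorem~\ref{th:Regularity} (namely Theorem~\ref{th:StrongRegularity}, whose use is announced right after Theorem~\ref{th:Regularity}) to get a uniform $\Class^{2,\alpha}$ bound on $\Phi$, hence a uniform $\Class^{1,\alpha}$ bound on $\Gglobal$. In parallel I would apply the quantitative version of Theorem~\ref{th:StrictConcavity} (Theorem~\ref{th:UniformConcavity}), whose hypotheses include the lower mass bound $\eps_0$ and the Poincar\'e-Wirtinger inequality~\eqref{eq:L1-poincare}, to obtain a uniform lower bound on the smallest non-zero eigenvalue of $-\D\Gglobal(\psi)$, uniformly in $\psi\in\mathcal{K}_{\eps_0}^+$. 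This gives uniform strong monotonicity of $\Gglobal$ restricted to $E_Y$, which is exactly what is needed to control the Newton direction $d_k = -\D\Gglobal(\psi_k)^+(\Gglobal(\psi_k)-\mu)$: its norm is bounded by a fixed multiple of the residual $\nr{\Gglobal(\psi_k)-\mu}$.

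With these two uniform bounds in hand, the main remaining task is to check that Step~2 of the algorithm always terminates with an integer $\ell_k$ bounded by some constant $L$ independent of $k$. This is the step I expect to be the main technical obstacle. A Taylor expansion of $\Gglobal$ along the segment $[\psi_k,\psi_k+2^{-\ell}d_k]$, using uniform Lipschitz continuity of $\D\Gglobal$ on $\mathcal{K}_{\eps_0}^+$ and the uniform bound on $\nr{d_k}$, shows that for $\ell$ large enough one has both $\nr{\Gglobal(\psi_k^\ell)-\mu}\leq(1-2^{-(\ell+1)})\nr{\Gglobal(\psi_k)-\mu}$ and (by continuity of $G_y$ and the reserve between $\min_y G_y(\psi_k)$ and $\eps_0$, which is at least $\eps_0$ by definition of $\eps_0$ at the initial step and by an inductive argument thereafter) $\min_y G_y(\psi_k^\ell)\geq\eps_0$. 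The delicate point is that the admissible $\ell$ must be uniform in $k$: this follows because the Taylor remainder is controlled by $\nr{d_k}^2\lesssim \nr{\Gglobal(\psi_k)-\mu}^2$, which is itself bounded by the initial residual.

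Once $\ell_k\leq L$ is established uniformly, the residual decreases by the factor $(1-2^{-(L+1)})<1$ at each iteration, which yields global linear convergence. For the local rate $1+\alpha$, I would argue that once the residual becomes small enough, the damping parameter $\ell_k$ can be taken equal to $0$ (pure Newton step), and the $\Class^\alpha$ regularity of $\D\Gglobal$ gives the standard refined estimate
\[
\nr{\Gglobal(\psi_{k+1})-\mu} \leq C \nr{\Gglobal(\psi_k)-\mu}^{1+\alpha},
\]
which is the claimed local convergence rate.
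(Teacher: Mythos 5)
Your plan matches the paper's proof: apply Proposition~\ref{prop:newton} to $G = \nabla\Phi + \mu$, after verifying its two hypotheses via Theorem~\ref{th:StrongRegularity} (uniform $\Class^{1,\alpha}$ regularity of $G$ on $\K^\eps$) and Theorem~\ref{th:UniformConcavity} (uniform strong monotonicity on $\K^\eps\cap E_Y$). Once those two hypotheses are checked, Proposition~\ref{prop:newton} already \emph{concludes} both the global linear rate and the local rate $1+\alpha$: the uniform lower bound on the acceptable damping exponent $\ell$ in Step~2, the Taylor control of the residual, and the threshold at which the pure Newton step is admissible are all part of its proof. You therefore do not need to re-derive Step~2 termination or the rate bounds, and the compactness observation about $\K^{\eps_0}$ modulo constants is likewise superfluous, since Theorems~\ref{th:StrongRegularity} and \ref{th:UniformConcavity} already state bounds that are uniform over all of $\K^\eps$.

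The one genuine gap is in your re-derivation of Proposition~\ref{prop:newton}, namely in your justification that the first constraint in Step~2, $\min_y G_y(\psi_k^\ell)\geq\eps_0$, is achievable. You invoke ``continuity of $G_y$ and the reserve between $\min_y G_y(\psi_k)$ and $\eps_0$, which is at least $\eps_0$ \dots\ by an inductive argument.'' This does not hold: for $k\geq 1$ the algorithm only enforces $\min_y G_y(\psi_k)\geq \eps_0$, not $\geq 2\eps_0$, so the alleged reserve can shrink to zero; continuity then only gives $G_y(\psi_k^\ell)\to G_y(\psi_k)\geq\eps_0$ as $\ell\to\infty$, with no guarantee that the limit is approached from above. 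The correct mechanism is the one the paper uses: write the first-order expansion along the Newton direction with $\tau:=2^{-\ell}$. Since $\D G(\psi_k)d_k = -(G(\psi_k)-\mu)$ on $E_Y$, one has $G(\psi_k^\ell)=(1-\tau)G(\psi_k)+\tau\mu+R(\tau)$ with $\nr{R(\tau)}=O(\tau^{1+\alpha})$, and because the definition \eqref{eq:nonzerocells} guarantees $\mu_y\geq 2\eps_0$, the linear part of $G_y(\psi_k^\ell)$ is $\geq(1-\tau)\eps_0+2\tau\eps_0=(1+\tau)\eps_0$, which dominates the remainder for small $\tau$. It is this drift of $G_y$ towards $\mu_y\geq 2\eps_0$ under the Newton step, not a persistent margin on the iterates themselves, that keeps the iteration inside $\K^{\eps_0}$; your argument misses this and would collapse exactly when $\min_y G_y(\psi_k)=\eps_0$.
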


\begin{remark}
  This theorem makes no assumption about the convexity (or
  $c$-convexity) of the support of the source density $\rho$. Such
  cases are not handled by other numerical methods for Monge-Ampère
  equations \cite{benamou2014numerical,loeper2011}.  For completeness,
  we provide in Appendix~\ref{app:poincare} an explicit example of a
  radial measure on $\Rsp^d$ whose support is an annulus but whose
  Poincar\'e-Wirtinger constant is nonetheless positive.
\end{remark}
\begin{remark}
  The positive lower bound on the damping parameter
  ($\tau_k=2^{-\ell}$ in Algorithm \ref{algo:newton}) established in
  this theorem degrades as $N$ grows to infinity. It is plausible (but
  far from direct) that one could control this quantity when $N$ is
  large by a comparison to the continuous Monge-Ampère equation. The
  strong concavity estimate (Theorem~\ref{th:StrictConcavity}) would
  then need to be replaced by uniform ellipticity estimates for the
  linearized Monge-Ampère equation, while the regularity estimate
  (Theorem~\ref{th:Regularity}) would be replaced by regularity
  estimates for solutions to the Monge-Ampère equation. We refer to
  Loeper and Rapetti \cite{loeper2005numerical} for an implementation
  of this ideas in a continuous setting. The space-discretization of
  their approach is open.
  %% using the regularity theory for optimal transport
  %% developed by Caffarelli as in the work of Loper and
  %% Rapetti~\cite{loeper2005numerical} (see the Comparison to previous
  %% works below), consists in linearizing the continuous Monge Amp\`ere
  %% equation and then discretizing it. In our approach, we discretize
  %% first the Monge Amp\`ere equation.
\end{remark}

\subsection*{Comparison to previous work.}
There exist a few other numerical methods relying on Newton's
algorithm for the resolution of the standard Monge-Ampère equation or
for the quadratic optimal transport problem. Here, we highlight some
of the differences between Algorithm~\ref{algo:newton} and
Theorem~\ref{th:main} and these existing results.  First, we note that
many authors have reported the good behavior in practice of Newton's or
quasi-Newton's methods for solving discretized Monge-Ampère equations
or optimal transport problems
\cite{merigot2011multiscale,de2012blue,benamou2014numerical}. Note
however that none of these works contain convergence proofs for the
Newton's algorithm.
    %% using a A Newton's method to solve a monotone finite-difference
    %% discretization of the Monge-Ampère equation with Dirichlet
    %% conditions by Froese and Oberman \cite{} and of quadratic
    %% optimal transport by Benamou, Froese and Oberman \cite{}. While
    %% the authors report good numerical behaviour, they are not able
    %% to prove the convergence of the algorithm.

Loeper and Rapetti \cite{loeper2005numerical} (refined by Saumier,
Agueh, and Khouider \cite{saumier2015efficient}) establish the global
convergence of a damped Newton's method for solving quadratic optimal
transport on the torus, relying heavily on Caffarelli's regularity
theory. In particular, the convergence of the algorithm requires a
positive lower bound on the probability densities, while this
condition is not necessary for Theorem~\ref{th:main} (see
Section~\ref{sec:Concavity} and Appendix~\ref{app:poincare} where we
construct explicitly probability densities with non-convex support
that still satisfy the hypothesis of Theorem~\ref{th:main}). A second
drawback on relying on the regularity theory for optimal transport is
that the damping parameter, which is an input parameter of the
algorithm used in \cite{loeper2005numerical}, cannot be determined
explicitly from the data. Third, the convergence proof is for
continuous densities, and it seems difficult to adapt it to the
space-discretized problem. On the positive side, it seems likely that
the convergence proof of
\cite{loeper2005numerical}\cite{saumier2015efficient} can be adapted
to cost functions satisfying the Ma-Trudinger-Wang condition (which is
equivalent to Loeper's condition \eqref{eq:MTW} that we also require).

Oliker and Prussner prove the \emph{local} convergence of Newton's
method for finding Alexandrov's solutions to the Monge-Ampère equation
$\det\D^2 u = \nu$ with Dirichlet boundary conditions, where $\nu$ is
a finitely supported measure \cite{oliker1989numerical}. Global
convergence for a damped Newton's algorithm is established by Mirebeau
\cite{mirebeau2015discretization} for a variant of Oliker and
Prussner's discretization, but without convergence
rates. Theorem~\ref{th:main} can be seen as an extension of the
strategy used by Mirebeau to optimal transport problems, which amounts
to (a) replacing the Dirichlet boundary conditions with the second
boundary value conditions from optimal transport (b) replacing the
Lebesgue measure by more general probability densities and (c)
changing the Monge-Ampère equation itself in order to deal with more
general cost functions.

We also comment here that our result Theorem~\ref{th:UniformConcavity}
answers a conjecture first raised by Gangbo and McCann, in the case
when the cost function satisfies the Ma-Trudinger-Wang condition. In
\cite[Example 1.6]{GangboMcCann1996}, a numerical approach to the
semi-discrete optimal transport problem is suggested by taking what is
equivalent to the negative gradient flow of the Kantorovich function
defined in \eqref{eq:Phi} above. There, Gangbo and McCann conjecture
that this gradient flow should converge, and our result of uniform
concavity of the Kantorovich functional provides a positive answer to
a quantitative strengthening of this conjecture, at least for costs,
measures, and domains satisfying the assumptions of
Theorem~\ref{th:UniformConcavity}.

Finally, we note that the overall strategy for proving the
  convergence of Algorithm~1 (proving regularity then strict concavity
  of $\Phi$) shares features to the one used in
  \cite{carlier2010knothe} to study the relationship between highly
  anisotropic semi-discrete quadratic optimal transport and Knothe
  rearrangement.
%% \quentin{we might want to add a short survey of other Newton's methods
%%   for Monge-Amp\`ere:} i.e. Loeper-Rappetti \cite{loeper2005numerical}
%% (no space-discretization, but we can compare their approach to
%% establish convergence to our two theorems regularity/strong
%% concavity), Benamou-Froese-Oberman \cite{benamou2014numerical} (no
%% study of convergence of the algorithm) or Mirebeau
%% \cite{mirebeau2015discretization} (Monge-Ampère with Dirichlet
%% boundary conditions, global convergence of a damped Newton's
%% algorithm)

\subsection*{Outline}
In Section~\ref{sec:Aurenhammer}, we establish the differentiability
of Kantorovich's functional $\Phi$, adapting arguments from
\cite{aurenhammer1998minkowski}. In
Sections~\ref{section:localregularity} and \ref{sec:Regularity}, we
prove the (uniform) second-differentiability of Kantorovich's
functional when the cost function satisfies Loeper's \eqref{eq:MTW}
condition. Section \ref{sec:Concavity} is devoted to the proof of
uniform concavity of Kantorovich's functional, when the probability
density satisfies a Poincaré-Wirtinger inequality
\eqref{eq:L1-poincare}. In Section \ref{sec:Newton}, we combine these
intermediate results to prove the convergence of the damped Newton's
algorithm (Theorem~\ref{th:main}), and we present a numerical
illustration. Appendix~\ref{app:poincare} presents an explicit
construction of a probability density with non-convex support over
$\Rsp^d$ which satisfies the assumptions of
Theorem~\ref{th:main}. Appendix~\ref{app:proof-locreg} contains the
details of the proof of the main theorem of
Section~\ref{sec:Regularity}. %Finally, Appendix~\ref{sec:altproof}
%contains an alternate proof of a crucial transversality condition,
%under the assumption that the target set $Y$ is sampled from some
%continuous domain.

\subsubsection*{Acknowledgements}
QM and BT would like to acknowledge the support of the French ANR
through the grant ANR-16-CE40-0014 (MAGA). BT is also partially
supported by LabEx PERSYVAL-Lab (ANR-11-LABX-0025-01).

\section{Kantorovich's functional}
\label{sec:Aurenhammer}
The purpose of this section is to present the variational formulation
introduced in \cite{aurenhammer1998minkowski} for the semi-discrete
optimal transport problem, adapting the arguments presented for the
squared Euclidean cost in \cite{aurenhammer1998minkowski} to cost
functions satisfying \eqref{eq:Regprime} and \eqref{eq:Twprime}, which
are weaker than the conditions \eqref{eq:Reg} and \eqref{eq:Tw}
presented in the introduction:

\begin{equation}
\forall y \in Y,~~ c(\cdot,y) \in \Class^{0}(\Omega) \tag{Reg'} \label{eq:Regprime}
\end{equation}
\begin{equation}
\forall y\neq z\in Y,\ \forall t\in \Rsp,~~ \Haus_{g}^d((c(\cdot,y) - c(\cdot,z))^{-1}(t)) = 0 \tag{Twist'} \label{eq:Twprime}
\end{equation}
Note that under \eqref{eq:Twprime}, the map $T_\phi: X \to Y$ defined
by \eqref{eq:Tpsi} is uniquely-defined $\Haus_{g}^d$--almost everywhere. Most of the results presented here are well known in the optimal transport literature, we include proofs for completeness.

\begin{theorem}
  \label{th:Aurenhammerbis}
  Assume \eqref{eq:Regprime} and \eqref{eq:Twprime}, and let $\rho$ be
  a bounded probability density on $X$ and $\nu = \sum_{y\in Y}
  \nu_y\delta_y$ be a probability measure over $Y$. Then, the
  functional $\Phi$ defined by \eqref{eq:Phi} is concave,
  $\Class^1$-smooth, and its gradient is given by \eqref{eq:GradPhi}.
\end{theorem}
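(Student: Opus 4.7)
The functional $\Phi$ is the sum of the linear map $\psi\mapsto -\sum_y \psi(y)\nu_y$ and the integral $\Psi(\psi):=\int_X \min_{y\in Y}(c(x,y)+\psi(y))\rho(x)\,\dd\Haus_g^d(x)$. For each fixed $x$, the integrand is a pointwise minimum of affine functions of $\psi$, hence concave; integration against the nonnegative density $\rho$ preserves concavity, and the linear part does not spoil it. So concavity of $\Phi$ is immediate and requires neither \eqref{eq:Regprime} nor \eqref{eq:Twprime}.

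For $\Class^1$-smoothness and the gradient formula, the plan is to first establish the global supergradient inequality
\[
\Phi(\psi')-\Phi(\psi)\leq\sum_{y\in Y}(\psi'(y)-\psi(y))\bigl(\rho(\Lag_y(\psi))-\nu_y\bigr),\qquad \forall\,\psi,\psi'\in\Rsp^Y.
\]
By \eqref{eq:Twprime}, any two distinct Laguerre cells $\Lag_y(\psi)$ and $\Lag_z(\psi)$ intersect only inside the level set $(c_y-c_z)^{-1}(\psi(z)-\psi(y))$, which is $\Haus_g^d$-negligible, so the family $\{\Lag_y(\psi)\}_{y\in Y}$ partitions $X$ up to a $\rho$-null set, and for $\rho$-a.e.\ $x$ there is a unique minimizer $y(x)$ of $z\mapsto c(x,z)+\psi(z)$. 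Using $\min_z(c(x,z)+\psi'(z))\leq c(x,y(x))+\psi'(y(x))$ I obtain the pointwise bound $\min_z(c(x,z)+\psi'(z))-\min_z(c(x,z)+\psi(z))\leq \psi'(y(x))-\psi(y(x))$ for $\rho$-a.e.\ $x$, and integrating cell by cell yields the claimed inequality.

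Next, I would prove that $G_y(\psi):=\rho(\Lag_y(\psi))$ is continuous in $\psi$. For $\psi'$ close to $\psi$ in $\Rsp^Y$, the symmetric difference satisfies
\[
\Lag_y(\psi')\,\triangle\,\Lag_y(\psi)\subseteq\bigcup_{z\neq y}\bigl\{x\in X:\abs{(c_y-c_z)(x)-(\psi(z)-\psi(y))}\leq 2\nr{\psi'-\psi}_\infty\bigr\}.
\]
Hypothesis \eqref{eq:Twprime} makes each level set $(c_y-c_z)^{-1}(\psi(z)-\psi(y))$ $\Haus_g^d$-null, so by dominated convergence (against the bounded density $\rho$) the $\rho$-mass of each such tube tends to $0$ as $\psi'\to\psi$, hence $G_y(\psi')\to G_y(\psi)$.

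To conclude, I would apply the supergradient inequality at both $\psi$ and $\psi+td$ to sandwich the difference quotient $t^{-1}(\Phi(\psi+td)-\Phi(\psi))$ between $\sca{G(\psi+td)-\mu}{d}$ and $\sca{G(\psi)-\mu}{d}$ (for $t>0$; the inequalities flip for $t<0$). Continuity of $G$ then forces both ends to the common limit $\sca{G(\psi)-\mu}{d}$, proving Gâteaux differentiability with gradient $G(\psi)-\mu$; continuity of $G$ upgrades this to $\Class^1$-smoothness. The one real technical point is the continuity of $G$: under the stronger \eqref{eq:Reg}/\eqref{eq:Tw} from the introduction the cell boundaries are smooth hypersurfaces with controlled normals, whereas here only the measure-theoretic assumption \eqref{eq:Twprime} is available, so the argument must rest entirely on squeezing the symmetric difference into a thin sublevel tube of $c_y-c_z$.
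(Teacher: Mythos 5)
Your proposal follows the same route as the paper: establish the global supergradient inequality with $G(\psi)-\nu$ playing the role of the supergradient, prove continuity of $\psi\mapsto G(\psi)=(\rho(\Lag_y(\psi)))_y$ by squeezing the symmetric difference of Laguerre cells into a thin tube around a level set of $c_y-c_z$, which is $\rho$-negligible by \eqref{eq:Twprime}, and conclude $\Class^1$-smoothness from the fact that a continuous selection of the superdifferential of a concave function is its gradient. The only cosmetic differences are that you derive concavity directly from pointwise concavity of the integrand whereas the paper infers it from exhibiting a supergradient, and that you spell out the sandwiching of the difference quotient which the paper abbreviates as ``standard arguments from convex analysis.''
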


The proof of Theorem~\ref{th:Aurenhammerbis} relies on
Propositions~\ref{prop:OT} and \ref{prop:Gcont}.

\begin{proposition} For any  $\psi:Y\to \Rsp$, the map $T_\psi$ is
  an optimal transport map for the cost $c$ between any probability
  density $\rho$ on $\Omega$ and the pushforward measure $\nu :=
T_{\psi \#} \rho$.
  \label{prop:OT}
\end{proposition}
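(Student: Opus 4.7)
The plan is to argue by a direct duality/comparison argument, bypassing any appeal to Kantorovich's theorem. By definition of $T_\psi$ as a pointwise minimizer, we have for all $x$ where $T_\psi(x)$ is uniquely defined and for all $y \in Y$:
\begin{equation*}
c(x,T_\psi(x)) + \psi(T_\psi(x)) \;\leq\; c(x,y) + \psi(y).
\end{equation*}
Under \eqref{eq:Twprime}, the set of $x$ at which the minimum in \eqref{eq:Tpsi} is attained at two distinct indices $y\neq z$ is contained in the union over $y\neq z$ of level sets of $c_y-c_z$, which has $\Haus_g^d$--measure zero. So $T_\psi$ is defined $\rho$-almost everywhere, and one easily verifies measurability (e.g.\ by breaking ties with a fixed linear order on the finite set $Y$).

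Next I would take any measurable competitor $S:\Omega\to Y$ with $S_\#\rho = \nu = T_{\psi\#}\rho$, apply the pointwise inequality above with $y = S(x)$, and integrate against $\rho\,\d\Haus_g^d$. This gives
\begin{equation*}
\int_\Omega c(x,T_\psi(x))\,\rho(x)\,\d\Haus_g^d(x) + \int_\Omega \psi(T_\psi(x))\,\rho(x)\,\d\Haus_g^d(x)
\;\leq\; \int_\Omega c(x,S(x))\,\rho(x)\,\d\Haus_g^d(x) + \int_\Omega \psi(S(x))\,\rho(x)\,\d\Haus_g^d(x).
\end{equation*}

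The crucial observation is that both of the price integrals reduce to $\sum_{y\in Y}\psi(y)\nu_y$, by the change-of-variables formulas
\begin{equation*}
\int_\Omega \psi(T_\psi(x))\,\rho(x)\,\d\Haus_g^d(x) \;=\; \int_Y \psi\,\d(T_{\psi\#}\rho) \;=\; \int_Y \psi\,\d\nu
\end{equation*}
and similarly for $S$, using $S_\#\rho=\nu=T_{\psi\#}\rho$. These two terms therefore cancel, leaving
\begin{equation*}
\int_\Omega c(x,T_\psi(x))\,\rho(x)\,\d\Haus_g^d(x) \;\leq\; \int_\Omega c(x,S(x))\,\rho(x)\,\d\Haus_g^d(x),
\end{equation*}
which is exactly the optimality of $T_\psi$ for the Monge problem \eqref{eq:M} with marginals $\rho$ and $\nu$.

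There is no real obstacle here: the entire content is the pointwise optimality built into the definition of $T_\psi$, together with the fact that the two transport maps push $\rho$ forward to the same measure $\nu$, so the price differential cancels in integration. The only mild technical point is the almost-everywhere definition of $T_\psi$, which is handled by \eqref{eq:Twprime}; this is why \eqref{eq:Twprime} rather than the stronger \eqref{eq:Tw} suffices at this stage.
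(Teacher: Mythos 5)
Your proposal is correct and follows essentially the same argument as the paper: apply the pointwise inequality defining $T_\psi$ with $y=S(x)$, integrate, and cancel the price terms using the change-of-variables identity $\int_\Omega \psi(S(x))\rho\,\d\Haus_g^d=\int_Y\psi\,\d\nu=\int_\Omega\psi(T_\psi(x))\rho\,\d\Haus_g^d$. The only cosmetic difference is that you spell out the almost-everywhere definition and measurability of $T_\psi$ under \eqref{eq:Twprime}, which the paper states just before Theorem~\ref{th:Aurenhammerbis} rather than inside this proof.
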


\begin{proof} Assume that $\nu = S_{\#}\rho$ where $S$ is a
  measurable map between $X$ and $Y$. Then, by definition of $T_\psi$
  one has
$$ \forall x\in X,~~ c(x,T_\psi(x)) + \psi(T_\psi(x)) \leq c(x,S(x))
  + \psi(S(x)).$$ Multiplying this inequality by $\rho$ and
  integrating it over $X$ gives
$$ \int_{X} (c(x,T_\psi(x)) + \psi(T_\psi(x))) \rho(x) \dd\Haus_{g}^d(x)
  \leq \int_{X} (c(x,S(x)) + \psi(S(x))) \rho(x) \dd\Haus_{g}^d(x)$$ Since
  $\nu = S_{\#}\rho = T_{\psi\#} \rho$, the change of variable formula
  gives
$$ \int_{X}\psi(S(x)) \rho(x) \dd\Haus_{g}^d(x) = \int_Y \phi(y) \dd \nu =
  \int_X \psi(T_\psi(x)) \rho(x) \dd \Haus_{g}^d(x)$$ Substracting this
  equality from the inequality above shows that $T_\psi$ is optimal:
\begin{equation*} \int_X c(x,T_\psi(x)) \rho(x) \dd\Haus_{g}^{d}(x) \leq \int_X
  c(x,S(x)) \rho(x) \dd\Haus_{g}^{d}(x)\qedhere
  \end{equation*}
\end{proof}

\begin{proposition} \label{prop:Gcont}
  Assume \eqref{eq:Twprime} and \eqref{eq:Regprime}. Let $\rho$ be a
  probability density over a compact subset $X$ of $\Omega$. Then, the
  map $\Gglobal: \Rsp^Y \to \Rsp^Y$ is continuous:
\begin{equation}
 \Gglobal(\psi) = \left(\rho(\Lag_y(\psi))\right)_{y\in Y} 
\end{equation}
\end{proposition}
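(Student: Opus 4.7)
The plan is to show continuity of $\Gglobal$ by a measure-theoretic convergence argument: if $\psi_n \to \psi$, then the Laguerre cells $\Lag_y(\psi_n)$ and $\Lag_y(\psi)$ differ only in a small neighborhood of the ``boundary'' set $\{x : c_y(x)+\psi(y) = c_z(x)+\psi(z)\}$ for some $z\neq y$, and by \eqref{eq:Twprime} these boundary sets have $\Haus_g^d$-measure zero, hence $\rho$-measure zero.

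More concretely, for each $\eps>0$ I would introduce the ``strict'' Laguerre cell
\[
\Lag_y^{\eps}(\psi) := \bigl\{ x \in \Omega \mid \forall z \in Y\setminus\{y\},\ c(x,y)+\psi(y) < c(x,z)+\psi(z) - 2\eps\bigr\}.
\]
Using \eqref{eq:Regprime}, the continuity of the $c_y$'s is not actually needed here (the definition involves only pointwise inequalities), but for large enough $n$ one has $|\psi_n(z)-\psi(z)|<\eps$ for every $z\in Y$, and a direct comparison then shows $\Lag_y^{\eps}(\psi) \subseteq \Lag_y(\psi_n)$. Taking liminf gives
\[
\liminf_{n\to\infty} \rho(\Lag_y(\psi_n)) \geq \rho(\Lag_y^{\eps}(\psi)).
\]
As $\eps \searrow 0$, the sets $\Lag_y^{\eps}(\psi)$ increase to the open strict cell $\Lag_y^{0}(\psi) := \{ x : c_y(x)+\psi(y) < c_z(x)+\psi(z) \text{ for all } z\neq y\}$, and the complement $\Lag_y(\psi)\setminus \Lag_y^{0}(\psi)$ is contained in the union over $z\neq y$ of the level sets $\{c_y-c_z = \psi(z)-\psi(y)\}$. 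By \eqref{eq:Twprime}, each of these level sets has $\Haus_g^d$-measure zero, hence $\rho$-measure zero, so monotone convergence yields $\rho(\Lag_y^{\eps}(\psi)) \to \rho(\Lag_y(\psi))$, and therefore $\liminf_n \rho(\Lag_y(\psi_n)) \geq \rho(\Lag_y(\psi))$ for every $y \in Y$.

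To upgrade this one-sided inequality to an actual limit, I would use a mass-conservation argument. By \eqref{eq:Twprime}, for any $\phi \in \Rsp^Y$ the Laguerre cells $(\Lag_y(\phi))_{y\in Y}$ cover $\Omega$, and any two cells intersect in a set of $\Haus_g^d$-measure zero. Consequently $\sum_{y\in Y} \rho(\Lag_y(\phi)) = \rho(\Omega) = 1$ for every $\phi$. Summing the liminf inequality over the finite set $Y$ gives
\[
\sum_{y \in Y} \liminf_{n\to\infty} \rho(\Lag_y(\psi_n)) \geq \sum_{y\in Y} \rho(\Lag_y(\psi)) = 1 = \lim_{n\to\infty}\sum_{y\in Y} \rho(\Lag_y(\psi_n)),
\]
which forces equality in every coordinate, proving $\rho(\Lag_y(\psi_n)) \to \rho(\Lag_y(\psi))$ and hence the continuity of $\Gglobal$.

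The only delicate step is verifying that the boundary sets $\{c_y-c_z = t\}$ have $\rho$-measure zero, and that the pairwise-disjoint-up-to-null-sets property extends to the full cover $\sum_y \rho(\Lag_y(\phi))=1$; both are immediate consequences of \eqref{eq:Twprime} together with the fact that $\rho$ is absolutely continuous with respect to $\Haus_g^d$ and supported in $X\subset\Omega$. Everything else is a routine ``strict approximation plus monotone convergence'' argument.
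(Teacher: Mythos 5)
Your proof is correct, but it takes a genuinely different route from the paper. The paper's argument is a direct, two-sided estimate: it bounds $\abs{G_y(\psi)-G_y(\phi)}$ by $\rho(\Lag_y(\psi)\Delta\Lag_y(\phi))$, then by a sum of symmetric differences of the half-spaces $H_z = \{c_y + \psi(y) \leq c_z + \psi(z)\}$, and observes that $H_z(\psi)\Delta H_z(\phi)$ sits inside a thin sublevel band $f^{-1}([\psi(z)-\psi(y),\phi(z)-\phi(y)])$ of $f = c_y - c_z$; continuity then follows from Lemma~\ref{lemma:leb}, which is a one-variable monotone-convergence argument showing that the distribution function $t\mapsto\rho(f^{-1}((-\infty,t]))$ is continuous under \eqref{eq:Twprime}. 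Your argument instead establishes only a one-sided (lower semicontinuity) bound via the $\eps$-shrunken strict cells, using \eqref{eq:Twprime} to identify $\rho(\Lag_y^0(\psi))$ with $\rho(\Lag_y(\psi))$, and then exploits the mass-conservation identity $\sum_y \rho(\Lag_y(\phi))=1$ to force the $\limsup$ bound as well (writing $\limsup_n \rho(\Lag_{y_0}(\psi_n)) = 1 - \liminf_n \sum_{y\neq y_0}\rho(\Lag_y(\psi_n)) \leq \rho(\Lag_{y_0}(\psi))$, a step you should make explicit since ``forces equality in every coordinate'' by itself only pins down the liminfs). Both arguments rest on the same null-set input from \eqref{eq:Twprime}, but they spend it differently: the paper's version controls each Laguerre cell in isolation and hence gives an explicit modulus of continuity that does not depend on $\rho$ being a probability measure, whereas yours avoids any modulus estimate and Lemma~\ref{lemma:leb} but crucially relies on the finiteness and normalization of $\rho$ and on the cells partitioning $\Omega$ up to a null set.
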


\begin{lemma} Let $\rho$ be a probability density over a compact subset $X$ of $\Omega$,  and let
  $f$ in $\Class^0(X)$ be such that $\rho(f^{-1}(t)) = 0$ for all $t
  \in \Rsp$. Then, the function $g: t \mapsto
  \rho(f^{-1}((-\infty,t]))$ is continuous.
    \label{lemma:leb}
\end{lemma}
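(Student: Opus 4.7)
The plan is to show both left- and right-continuity of $g$ separately, using standard measure-theoretic convergence for monotone sequences of sets.

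First, I would fix $t \in \Rsp$ and consider a sequence $t_n \to t$. Continuity of $f$ makes the preimages $f^{-1}((-\infty, s])$ closed in $X$. The key observation is the set-theoretic identities
\begin{align*}
\bigcap_{n} f^{-1}((-\infty, t_n]) &= f^{-1}((-\infty, t]) \quad \text{if } t_n \searrow t,\\
\bigcup_{n} f^{-1}((-\infty, t_n]) &= f^{-1}((-\infty, t)) \quad \text{if } t_n \nearrow t.
\end{align*}
The first holds because $\bigcap_n (-\infty, t_n] = (-\infty, t]$ whenever $t_n \searrow t$, and the second because $\bigcup_n (-\infty, t_n] = (-\infty, t)$ whenever $t_n \nearrow t$ (here $t_n < t$, so $t$ is not captured in the union).

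For right-continuity, I would apply continuity of $\rho$ along decreasing sequences of measurable sets (all have finite measure since $\rho$ is a probability density) to get $g(t_n) = \rho(f^{-1}((-\infty, t_n])) \to \rho(f^{-1}((-\infty, t])) = g(t)$.

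For left-continuity, continuity of $\rho$ along increasing sequences gives $g(t_n) \to \rho(f^{-1}((-\infty, t)))$. This is where the hypothesis $\rho(f^{-1}(t)) = 0$ enters: it lets me identify
\[\rho(f^{-1}((-\infty, t))) = \rho(f^{-1}((-\infty, t])) - \rho(f^{-1}(\{t\})) = g(t),\]
completing the proof. There is no real obstacle here; the only subtlety is that without the zero-measure assumption on level sets, $g$ could have a jump of size $\rho(f^{-1}(\{t\}))$ at $t$, so the hypothesis is exactly what is needed to kill the potential left-discontinuity.
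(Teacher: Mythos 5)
Your proposal is correct and follows essentially the same route as the paper's proof: both use monotone convergence of measure along increasing/decreasing sequences of preimage sets to get one-sided continuity, and both identify the potential jump at $t$ with $\rho(f^{-1}(t))$, which vanishes by hypothesis. The paper simply packages the left limit as an auxiliary function $h(t)=\rho(f^{-1}((-\infty,t)))$ and observes $g\equiv h$, which is your argument in slightly more compressed form.
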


\begin{proof} We consider the function $h(t) = \rho(f^{-1}((-\infty,t)))$. By hypothesis, $g(t) - h(t) = \rho(f^{-1}(t)) = 0$. Moreover, using Lebesgue's monotone convergence theorem one easily sees that $g$ (resp. $h$) is right-continuous (resp. left-continuous). This concludes the proof
%% Applying Lebesgue's monotone convergence theorem to the characteristic
%% functions of the open sets $f^{-1}((t;+\infty))$, we get
%%   $$ \lim_{s \to t, s > t} \rho(f^{-1}((s;+\infty))) =
%%   \rho(f^{-1}((s;+\infty))),$$ which, by the relation $g(t) = 1 -
%%   \rho(f^{-1}((t;+\infty)),$ establishes the right-continuity of $g$
%%   right-continuous. Since $g$ is non-decreasing, the only possible
%%   discontinuity is that
%%   $$ \lim_{s \to t, s < t} g(s) < g(t), $$
%%   which by Lebesgue's convergence theorem would imply
%%   $$ \rho(f^{-1}(t)) = \rho(f^{-1}((-\infty; t])) -
%%     \rho(f^{-1}((-\infty; t))) > 0.$$ By assumption this is not
%%     possible, thus proving the lemma.
\end{proof}

\begin{proof}[Proof of Proposition~\ref{prop:Gcont}]
  Proving the continuity of $\Gglobal$ amounts to proving the
  continuity of the functions $G_y(\psi) := \rho(\Lag_y(\psi))$ for
  any $y$ in $Y$. Fix $y$ in $Y$ and remark that by definition,
  $\Lag_y(\psi) = \bigcap_{z \neq y \in Y} H_z(\psi)$
  where $$H_z(\psi) := \{ x \in X\mid c(x,y) + \psi(y) \leq c(x,z) +
  \psi(z) \}.$$ Denoting $A\Delta B$ the symmetric difference of two
  sets, we have the following inequalities
%% \begin{align*}
%%   \Lag_y(\psi) \Delta  \Lag_y(\phi) \subseteq \bigcup_{z \neq y \in Y} H_z(\psi)\Delta H_z(\phi), 
%% \end{align*}
%% so that
\begin{equation}
  \abs{G_y(\psi) - G_y(\phi)} \leq
  \rho(\Lag_y(\psi) \Delta  \Lag_y(\phi)) \leq \sum_{z\in Y\setminus\{y\}}  \rho(H_z(\psi)\Delta H_z(\phi)).
  \label{eq:symdiff:bnd}
  \end{equation}
Fix $z \neq y \in Y$, and denote $f = c(\cdot,y) - c(\cdot,z)$. Then,
$$ H_z(\psi)\Delta H_z(\phi) \subseteq f^{-1}([\psi(z) - \psi(y),
  \phi(z) - \phi(y)]). $$ 
Here and after, we use the convention  that $[a,b]=[\min\{a,b\},\max\{a,b\}]$. 
  By \eqref{eq:Twprime} and
Lemma~\ref{lemma:leb} we know that
%% \lim_{\phi \to \psi} \abs{\rho(H_z(\phi)) - \rho(H_z(\psi))}
%% \geq
$\lim_{\phi \to \psi} \rho(H_z(\psi)\Delta H_z(\phi)) = 0,$ which
with \eqref{eq:symdiff:bnd} concludes the proof.
\end{proof}

\subsection{Proof of Theorem~\ref{th:Aurenhammer}} We simultaneously  show that
  the functional is concave and compute its gradient.  For any
  function $\psi$ on $Y$ and any measurable map $T: X\to Y$, one has
  $\min_{y\in Y} (c(x,y) + \psi(y)) \leq c(y,T(y)) + \psi(T(y))$, which
  by integration gives
\begin{equation}
 \Phi(\psi) \leq \int_{X} (c(x,T(x)) + \psi(T(x)))\rho(x)\dd\Haus_{g}^{d}(x) - \sum_{y\in Y}\psi(y)\nu_y. 
\label{eq:AurSD}
\end{equation}
Moreover, equality holds when $T=T_\psi$. Taking another function
$\phi$ on $Y$ and setting $T = T_\phi$ in Equation~\eqref{eq:AurSD}
gives $$\Phi(\psi) \leq \Phi(\phi) +\sca{\Gglobal(\phi) - \nu}{\psi-\phi},$$
where $\Gglobal$ is defined as in the statement of 
Proposition~\ref{prop:Gcont}. This proves that the
superdifferential $\partial^+\Phi(\phi)$  contains $G(\phi)-\nu$, thus establishing the
concavity of $\Phi$ and its differentiability almost everywhere. It is known that the 
supergradient $\partial^+ \Phi (\phi)$ is characterized by~\cite[Theorem 25.6]{rockafellar1970convex}
$$
\partial^+ \Phi (\phi)=\mathrm{conv}\left\{\lim_{n\to\infty} \nabla \Phi(\phi_n)\mid (\phi_n)\in S\right\},
$$
where $\mathrm{conv}$ denotes the convex envelope and $S$ the set of sequences $(\phi_n)$ converging to $\phi$ such that $\Phi$ is differentiable at $\phi_n$.
By Proposition~\ref{prop:Gcont}, the map $\Gglobal$ is continuous,
meaning that we have constructed a continuous selection of a
supergradient in the superdifferential of the concave function
$\Phi$
$$
\partial^+ \Phi (\phi)
= \mathrm{conv}\left\{\lim_{n\to\infty} \nabla \Phi(\phi_n)\right\}
= \mathrm{conv}\left\{\lim_{n\to\infty} G(\phi_n)-\nu\right\}
= \{ G(\phi)-\nu \}.
$$
%By standard arguments from convex analysis, 
This proves that $\Phi$ is $\Class^1$, and that $\nabla \Phi(\phi) = \Gglobal(\phi) - \nu$.
%\end{proof}

\begin{comment}
By definition, $x$ belongs to the Laguerre cell $\Lag_y(\psi)$ if and
only if
\begin{equation}
  % x \in \Lag_y(\phi) \Longleftrightarrow
  \forall z \in Y,~\psi(z) \geq \psi(y) + c(x,y) - c(x, z)  
\end{equation}
with equality at the point $y$.  This explains the choice of name: a
function is $c$-convex if it is equal to the upper envelope of
\emph{$c$-affine functions} such as $c(x,\cdot) + \alpha_x$.

\quentin{I was wrong, I believe we shall call these functions $-c$-convex...}
\boris{I agree with this terminology, also coherent with the fact that a a solution of (MA) is $c$-convex.}
\end{comment}

\section{Local regularity in a  $c$-exponential chart}\label{section:localregularity}
The results presented in this section constitute an intermediate step
in the proof of $\Class^{2,\alpha}$ regularity of Kantorovich's
functional. Let $\Xloc$ be a compact, convex subset of $\Rsp^d$ and
$f_1,\hdots, f_N$ be $\Class^{1,1}$ functions on $\Xloc$ which are
quasi-convex, meaning that for any scalar $\lambda \in \Rsp$ the closed
sublevel sets $K_i(\lambda) := f_i^{-1}([-\infty, \lambda])$ are convex.
Let $\rholoc$ be a continuous probability density over $\Xloc$. The
purpose of this section is to give sufficient conditions to ensure
the regularity of the following function $\Gloc$ near the origin of $\Rsp^N$:
\begin{align}
  &\Gloc: \bm{\lambda} \in \Rsp^N \mapsto \int_{K(\bm{\lambda})} \rholoc(x) \d\Haus^d(x), \label{eq:G}\\
  &\hbox{where }
  K(\bm{\lambda}) := 
  %\Xloc \cap 
  \bigcap_{i=1}^N K_i(\lambda_i) %\\
  = \{ x \in \Xloc \mid \forall i \in \{1,\hdots,N\},~ f_i(x) \leq \lambda_i \}. \notag
\end{align}

%--------
\subsection{Assumptions and statement of the theorem.} We will impose
two conditions on the functions $(f_i)_{1\leq i\leq N}$. As we will
see in Section~\ref{sec:Regularity}, both conditions are satisfied
when these functions $(f_i)$ are constructed from a semi-discrete
optimal transport transport problem whose cost function satisfy
Loeper's condition (see Definition~\ref{def:Loeper}).

\subsubsection*{Non-degeneracy} The functions $(f_i)$
satisfy the \emph{nondegeneracy condition} if the norm of their
gradients is bounded from below: 
\begin{equation} \eps_{nd} := \min_{1\leq i\leq N} \min_{\Xloc} \nr{\nabla f_i} > 0. \tag{ND} 
\label{eq:ND}
\end{equation}
This condition is necessary for the continuity of the map $\Gloc$ even
when $N=1$.% there is a single function ($N=1$).

\subsubsection*{Transversality}
The boundary of the convex set $K(\bm{\lambda})$ can be decomposed
into $N+1$ \emph{facets}, namely $(K(\bm{\lambda}) \cap \partial
K_i(\lambda_i))_{1\leq i\leq N}$ and $K(\bm{\lambda})\cap \partial
\Xloc$. The purpose of the transversality condition we consider is to
ensure that the angle between adjacent facets is bounded from below 
when $\bm{\lambda}$ remains close to some fixed vector $\bm{\lambda}_0$.

\begin{definition}[Normal cone] Let $K$ be a convex compact set of $\Rsp^d$.  The \emph{normal cone}
to $K$ at a point $x$ in $K$ is the set
\begin{equation}
  \Normal_x K=\{v\in\Rsp^d\mid \forall y \in K,~\sca{y-x}{v}\leq 0 \},
  \label{eq:Nor}
\end{equation}
and its elements are said to be \emph{normal to $K$ at $x$}.
\end{definition}

\begin{definition}[Transversality] The family of functions $(f_i)$ satisfy the
\emph{transversality condition near $\bm{\lambda}_0$} if there exists
positive constants $\eps_{tr}$ and $T_{tr}\leq 1$ such that for every
$\bm{\lambda}$ in $\Rsp^N$ satisfying $\nr{\bm{\lambda} -
  \bm{\lambda_0}}_{\infty} \leq T_{tr}$ for the usual $\ell^\infty$
norm on $\Rsp^N$ and every point $x$ in $\partial K(\bm{\lambda})$ one
has,
\begin{equation}
        \tag{T}
\label{eq:T}
\begin{aligned}
  &\hbox{ if } \exists i \neq j \in\{1,\hdots,N\} \hbox{ s.t. } f_i(x) = \lambda_i \hbox{ and } f_j(x) = \lambda_j, \\
  &\hspace{4cm} \hbox{ then, }
    \left(\frac{\sca{\nabla f_i(x)}{\nabla f_j(x)}}{\nr{\nabla f_i(x)}\nr{\nabla f_j(x)}}\right)^2 \leq 1-\eps^2_{tr} \\
    &\hbox{ if } \exists i  \in\{1,\hdots,N\} \hbox{ s.t. } f_i(x) = \lambda_i \hbox{ and } x\in \partial \Xloc,\\
    &\hspace{3.5cm}\hbox{ then, }
   \forall u\in \Normal_x \Xloc,~ \left(\frac{\sca{u}{\nabla f_j(x)}}{\nr{u}\nr{\nabla f_j(x)}}\right)^2 \leq 1-\eps^2_{tr},
    \end{aligned}
\end{equation}
Note that $\partial \Xloc$ is smooth at $x$, $\Normal_x \Xloc$ is the
ray spanned by the exterior normal to $\Xloc$ at $x$.
\end{definition}

%We are now ready to state the main theorem of this section.
%\quentin{I'll try to get more explicit constants in this theorem}
\begin{theorem}
  \label{th:LocReg} Assume that the functions satisfy the non-degeneracy condition
  \eqref{eq:ND} and the transversality condition \eqref{eq:T} near
  $\bm{\lambda_0}$. Let $\rholoc$ be a $\Class^{0,\alpha}$ probability
  density on $\Xloc$.  Then, the map $\Gloc$ defined in \eqref{eq:G}
  is of class  $\Class^{1,\alpha}$ on the cube $Q := \bm{\lambda_0} +
  [-T_{tr},T_{tr}]^N$ and has partial derivatives given by
\begin{equation}
  \frac{\partial \Gloc}{\partial \lambda_i}(\bm{\lambda}) =
  \int_{K({\bm{\lambda}}) \cap \partial K_i({\bm{\lambda}})}
  \frac{\rholoc(x)}{\nr{\nabla f_i(x)}} \dd \Haus^{d-1}(x). \label{eq:PD}
\end{equation}
In addition, the norm $\nr{\Gloc}_{\Class^{1,\alpha}(Q)}$ is bounded
by a constant depending only on $\eps_{tr},\eps_{nd},
\nr{\rholoc}_{\Class^{0,\alpha}(\Xloc)}$, on the diameter of $\Xloc$ and on
$$ C_M := \max_{1\leq i\leq N} \nr{\nabla f_i}_\infty \qquad C_L := \max_{1\leq i\leq N}
\nr{\nabla f_i}_{\Lip(\Xloc)}.$$
\end{theorem}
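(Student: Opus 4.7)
The strategy is first to establish the partial-derivative formula \eqref{eq:PD} by a coarea argument, then to upgrade this pointwise statement to $\Class^{1,\alpha}$ regularity by splitting each increment $\partial_i\Gloc(\bm{\lambda}')-\partial_i\Gloc(\bm{\lambda})$ into a ``surface motion'' term (arising when $\lambda_i$ varies) and a ``clipping motion'' term (arising when $\lambda_j$ varies for $j\neq i$). The main obstacle will be a uniform upper bound on the $(d-2)$-dimensional Hausdorff measure of the edge set $\Sigma(\bm{\lambda})$, which is exactly what the transversality hypothesis~\eqref{eq:T} is designed to provide.

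Write $K^{\hat\imath}(\bm{\lambda}):=\{x\in\Xloc:f_j(x)\leq\lambda_j\text{ for all }j\neq i\}$. Perturbing only the $i$-th coordinate by $h>0$ one has $K(\bm{\lambda}+h\one_i)\setminus K(\bm{\lambda})=K^{\hat\imath}(\bm{\lambda})\cap f_i^{-1}((\lambda_i,\lambda_i+h])$. Since $\nr{\nabla f_i}\geq\eps_{nd}$ by \eqref{eq:ND}, the coarea formula applied to $f_i$ yields
\begin{equation*}
\Gloc(\bm{\lambda}+h\one_i)-\Gloc(\bm{\lambda})=\int_{\lambda_i}^{\lambda_i+h}\int_{K^{\hat\imath}(\bm{\lambda})\cap\{f_i=t\}}\frac{\rholoc}{\nr{\nabla f_i}}\,\d\Haus^{d-1}\,\d t,
\end{equation*}
and dividing by $h$ and sending $h\to 0^\pm$ gives \eqref{eq:PD}, provided the inner integral is continuous in $t$. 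This continuity is itself a special case of the H\"older estimate established next.

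Given $\bm{\lambda},\bm{\lambda}'\in Q$, I would decompose $\partial_i\Gloc(\bm{\lambda}')-\partial_i\Gloc(\bm{\lambda})$ into the two contributions above. For the surface-motion part, the normal flow of $\nabla f_i/\nr{\nabla f_i}^2$ for time $\lambda_i'-\lambda_i$ is a $\Class^{0,1}$ diffeomorphism $\{f_i=\lambda_i\}\to\{f_i=\lambda_i'\}$ whose Jacobian is controlled by $C_L$ and $\eps_{nd}$; combined with the $\alpha$-H\"older continuity of $\rholoc$ and the Lipschitz control on $\nr{\nabla f_i}^{-1}$, this yields a bound of order $|\lambda_i-\lambda_i'|^\alpha$. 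For the clipping-motion part, the symmetric difference of $K^{\hat\imath}(\bm{\lambda})\cap\{f_i=\lambda_i\}$ and $K^{\hat\imath}(\bm{\lambda}')\cap\{f_i=\lambda_i\}$ lies within a tubular neighborhood on the hypersurface $\{f_i=\lambda_i\}$ of $\Sigma(\bm{\lambda})\cap\partial K_i(\lambda_i)$, of thickness at most $\eps_{tr}^{-1}\eps_{nd}^{-1}\nr{\bm{\lambda}-\bm{\lambda}'}_\infty$ by the angle bound in \eqref{eq:T} and \eqref{eq:ND}; its $\Haus^{d-1}$ measure is therefore controlled by the thickness times $\Haus^{d-2}(\Sigma(\bm{\lambda})\cap\partial K_i(\lambda_i))$.

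To close the argument, it remains to bound $\Haus^{d-2}(\Sigma(\bm{\lambda}))$ uniformly for $\bm{\lambda}\in Q$. Each piece $K(\bm{\lambda})\cap\partial K_i(\lambda_i)\cap\partial K_j(\lambda_j)$ is a transversal intersection of two $\Class^{1,1}$ convex hypersurfaces whose normals make an angle bounded below by $\arccos\sqrt{1-\eps_{tr}^2}>0$; the orthogonal projection onto the $(d-2)$-plane perpendicular to both $\nabla f_i$ and $\nabla f_j$ is then bi-Lipschitz with constants depending only on $\eps_{tr}$, and its image has diameter at most $\diam(\Xloc)$, giving a bound of order $C(d,\eps_{tr})\diam(\Xloc)^{d-2}$. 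Edges involving $\partial\Xloc$ are handled identically using the normal cone $\Normal_x\Xloc$; this convex-geometric step is the role of Appendix~\ref{app:hug}. Combining all of the above delivers the announced $\Class^{1,\alpha}$ bound, with constants depending only on $\eps_{tr}$, $\eps_{nd}$, $\nr{\rholoc}_{\Class^\alpha(\Xloc)}$, $\diam(\Xloc)$, $C_M$, and $C_L$.
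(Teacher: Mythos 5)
Your overall architecture — coarea formula for the partial derivative, a flow along $\nabla f_i/\nr{\nabla f_i}^2$ for the $\lambda_i$-variation, and a singular-set measure bound $\Haus^{d-2}(\Sigma(\bm{\lambda}))=\BigO(1/\eps_{tr})$ powering the $\lambda_j$-variation estimate — is the same as the paper's, which runs through Lemma~\ref{lemma:Phi}, Lemma~\ref{lem:HausSigma}, and Propositions~\ref{prop:PDUC}--\ref{prop:PDalpha}. But there is a genuine gap in your ``surface-motion'' step. You assert that the normal flow is a diffeomorphism from $\{f_i=\lambda_i\}$ to $\{f_i=\lambda_i'\}$ and feed the Jacobian bound into a change-of-variables estimate, as if this flow carried $K^{\hat\imath}(\bm{\lambda})\cap\{f_i=\lambda_i\}$ bijectively onto $K^{\hat\imath}(\bm{\lambda})\cap\{f_i=\lambda_i'\}$. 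It does not: trajectories of $\Phi_i$ starting on the clipped facet can exit $L:=K^{\hat\imath}(\bm{\lambda})$ through a lateral facet $\partial K_j(\lambda_j)$ or through $\partial\Xloc$ before time $\lambda_i'-\lambda_i$, and conversely not every point of $K^{\hat\imath}(\bm{\lambda})\cap\{f_i=\lambda_i'\}$ is reached by a trajectory that stayed in $L$. This ``boundary leakage'' contributes a term that is \emph{not} $\BigO(\abs{\lambda_i-\lambda_i'}^\alpha)$ merely from H\"older continuity of $\rholoc$; it must be bounded separately. The paper's Proposition~\ref{prop:PDUC} does exactly this by partitioning the facet $S_t\cap L$ into $A_t$ (trajectories staying in $L$, handled by the bi-Lipschitz change of variables you describe) and $B_t$ (trajectories hitting $\partial L$), and then bounding $\Haus^{d-1}(B_t)$ by pushing it onto the collar $\Lambda=\partial L\cap f_i^{-1}([t,s])$ via a reverse-Lipschitz projection and invoking the coarea formula plus Lemma~\ref{lem:HausSigma}. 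Without the $B_t$-analogue your surface-motion estimate is incomplete, and note that it requires the $\Haus^{d-2}(\Sigma)$ bound a second time, not only for the $\lambda_j$-variations.

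A smaller point: your sketch of the $\Haus^{d-2}(\Sigma(\bm{\lambda}))$ bound via orthogonal projection onto ``the $(d-2)$-plane perpendicular to both $\nabla f_i$ and $\nabla f_j$'' is not well-posed as stated, since those gradients rotate along the edge and there is no single such plane; moreover the edges with $\partial\Xloc$ involve a genuine normal cone rather than a smooth normal field. The paper's Proposition~\ref{prop:hug} avoids both issues by a packing/covering argument in the unit normal bundle, at the price of a non-sharp constant. Since you explicitly defer this step to Appendix~\ref{app:hug}, this is a matter of exposition rather than a gap, but the heuristic as written should not be mistaken for a proof.
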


Note that the $\Class^{1,\alpha}$ constant of $\Gloc$ depends on the
transversality constant~$\eps_{tr}$ but that it does not depend on
$T_{tr}$.

\subsection{Sketch of proof}
The correct expression for the partial derivatives of $\Gloc$, given
by equation \eqref{eq:PD}, can easily be guessed by applying the
co-area formula. The non-degeneracy condition then ensures that the
denominator in this expression does not vanish.  What is more delicate
is to prove that these partial derivatives are $\alpha$-H\"older, with
a uniform estimate on the $\alpha$-H\"older norm. A second application
of the co-area formula on the manifold $f_{i}^{-1}(\lambda_i)$
suggests that for $j\neq i$ one should have,
$$ \abs{\frac{\partial}{\partial \lambda_j} \int_{K({\bm{\lambda}})
    \cap \partial K_i({\bm{\lambda}})} \frac{\rholoc(x)}{\nr{\nabla
      f_i(x)}} \dd \Haus^{d-1}(x) } \leq C
\Haus^{d-2}(K({\bm{\lambda}}) \cap \partial K_i({\bm{\lambda}}) \cap
\partial K_j({\bm{\lambda}})) , $$ under the assumption that the
density $\rholoc$ is $\Class^1$ and the facet $K({\bm{\lambda}}) \cap
\partial K_i({\bm{\lambda}})$ does not intersect $\partial \Xloc$. It will turn out that, thanks to the transversality hypothesis, the
$\Haus^{d-2}$-measure of the union $\Sigma(\bm{\lambda})$ of these facets can be bounded
uniformly:
\begin{equation*}
  \Sigma(\bm{\lambda}) = \bigcup_{1\leq i\leq N} (K(\bm{\lambda}) \cap \partial \Xloc \cap \partial K_i(\lambda_i)) \cup
  \bigcup_{1 \leq i < j \leq N}
 (K(\bm{\lambda}) \cap \partial K_{i}(\lambda_{i}) \cap  \partial K_{j}(\lambda_{j})).
\end{equation*}
Note also that equivalently, a point $x$ belongs to the singular set
$\Sigma(\bm{\lambda})$ if and only if it satisfies one of the
assumptions in \eqref{eq:T}.  In the next subsection, we prove an upper
bound on $\Haus^{d-2}(\Sigma(\bm{\lambda}))$ (see
Proposition~\ref{prop:HausSigma}). The proof of Theorem~\ref{th:LocReg}
follows from this upper bound and from several applications of the
co-area formula. Since it is elementary but quite long, we have
postponed the proof of the theorem itself to Appendix~\ref{app:proof-locreg}.

\subsection{A control on the $(d-2)$--Hausdorff measure of singular points}
In this section, we prove that the transversality condition
\eqref{eq:T} and the quasi-convexity of the functions $(f_i)$ imply a
uniform upper bound on the $(d-2)$--Hausdorff measure of
$\Sigma(\bm{\lambda})$.

\begin{proposition} \label{prop:HausSigma}
  Assuming the transversality condition \eqref{eq:T}, there exists a
  constant depending only on $d$ and $\diam(\Xloc)$ such that for
  every $\nr{\bm{\lambda}}_\infty \leq T_{tr}$,
$$ \Haus^{d-2}(\Sigma(\bm{\lambda})) \leq C(d,\diam(\Xloc)) \cdot
  \frac{1}{\eps_{tr}}.$$
\end{proposition}

We will deduce this proposition from a general upper bound on the
$(d-2)$--Hausdorff measure of the set of $\tau$--singular points of a
compact convex body. A more general and quantitative version of this
bound can be found in \cite{hug1998generalized}. We provide below a
straightforward and easy proof based on the notions of packing and
covering numbers.
%------------------

\begin{proposition}\label{prop:hug}
  Let $K$ be a convex, compact set of $\Rsp^d$ and $\tau > 0$. Then, 
$$
\Haus^{d-2}(\Sing(K,\tau)) \leq C(d,\diam(K))\ \frac{1}{\tau},
$$
where $\Sing(K,\tau):=\{x\in \partial K \mid \exists u,v \in \Normal_{x}(K)\cap \Sph^{d-1},~~ \sca{u}{v}^2 \leq 1- \tau^2 \}.$
\end{proposition}

%%  Let $\tau$ be a
%% positive parameter. A point $x$ in the boundary of $K$ is
%% \emph{$\tau$-singular} if there exist two unit vectors $u,v$ in its normal
%% cone $\Normal_x K$ such that $\sca{u}{v}^2 \leq 1 - \tau^2$. Note in
%% particular that if $x$ is $\tau$-singular, the linear space spanned by
%% its normal cone has dimension two or more. Given a parameter $\tau
%% >0$, we consider the set of $\tau$-singular points 
Recall that the covering number $\Cov(K,\eta)$ of a subset $K\subseteq
\Rsp^d$ is the minimum number of Euclidean balls of radius $\eta$
required to cover $K$. The packing number of a subset $K$ is given by
$$ 
\Pack(K,\eta) := \max \{\mathrm{Card}(X) \mid X\subseteq K \hbox{ and }
\forall x\neq y\in X, \nr{x-y}\geq \eta \}.
$$
We will use the following comparisons between covering and packing numbers:
\begin{equation}\label{eq:covering_packing}
\Cov(K,\eta) \leq  \Pack(K,\eta) \leq \Cov(K,\frac{\eta}{2}).
\end{equation}

%--------
\begin{proof}[Proof of Proposition~\ref{prop:hug}]
The proof consists in comparing a lower bound and an upper bound of
the packing number of the set
$$
U :=\{(x,n)\in \Rsp^d\times \Sph^{d-1} \mid x\in \Sing(K,\tau)\mbox{ and } n\in \Normal_x(K)\}.
$$

\noindent \textbf{Step 1.} We first calculate an upper bound on the
covering number of the unit bundle $\Unit K := \{ (x,n) \in \partial
K\times \Sph^{d-1} \mid n \in \Normal_x K \}$. Given a positive radius
$r$, we denote by $K^r$ the set of points that are within distance $r$ of
$K$. By convexity, the projection map $\p_K:\Rsp^d\to K$, mapping a
point to its orthogonal projection on $K$, is well defined and
$1$-Lipschitz. We consider
\begin{align*}
\pi : \partial K^r & \to  \mathcal{U}(K)  \\
x&\mapsto  \left(\p_K(x), \frac{x-\p_K(x)}{\nr{x-\p_K(x)}}\right)
\end{align*}
The map $\pi$ is surjective and has Lipschitz constant
$L:=\sqrt{1+4/r^2}$. We deduce an upper bound on covering number of
$\Unit K$ from the covering number of the level set $\partial K^r$:
\begin{equation*}
\Cov(\Unit(K),\varepsilon)
\leq  \Cov\left(\partial K^r, \frac{\varepsilon}{L}\right).
\end{equation*}
Now, consider a sphere $S$ with diameter $2 \diam(K)$ that encloses
the tubular neighborhood $K^r$ with $r:= \diam(K)$. The projection map
$\p_{K^r}$ is $1$-Lipschitz, and $\p_{K^r}(S) = \partial K^r$. Using
the same argument as above, we have:
$$ \Cov\left(\partial K^r, \eta\right) \leq \Cov(S, \eta) \leq C(d) \cdot (\diam(K)/\eta)^{d-1}.$$
Combining these  bounds with the inclusion $U \subseteq \Unit(K)$ gives us
\begin{equation}\label{eq:hug1}
  \Cov(U,\varepsilon) \leq \frac{C(d,\diam(K))}{\eps^{d-1}}.
  \end{equation}
%---------------------------
\noindent \textbf{Step 2.} We now establish a lower bound for
$\Pack(U,2\varepsilon)$. Let $x$ be a $\tau$-singular point and $u,v$ be
two unit vectors such that $\sca{u}{v}^2 \leq 1 - \tau^2$. This
implies that $\Normal_x K\cap
\Sph^{d-1}$ contains a spherical geodesic 
segment of length at least $C \cdot \tau$, giving us a lower bound on
the packing number of $\Normal_x K\cap
\Sph^{d-1}$, namely $\Pack(\Normal_x K\cap
\Sph^{d-1}, \eta)
\geq C \cdot \tau/\eta$. Now, let $X$ be a maximal set in the
definition of the packing number $\Pack(\Sing(K,\tau),2\varepsilon)$
and for every $x\in X$, let $Y_x$ be a maximal set in the definition
of the packing number $\Pack(\Normal_x(K)\cap
\Sph^{d-1},2\varepsilon)$, so that $\Card(Y_x) \geq C \cdot
\tau/\eps$. Then, the set $Z := \{ (x, y) \mid x \in X,~ y \in Y_x \}$
is a $2\varepsilon$ packing of $U$, and the cardinality of this set is
bounded from below by $C \cdot \Card(X) \cdot \tau/\eps$. This
gives
\begin{equation}\label{eq:hug2}
\Pack(U,2\varepsilon) \geq  C \cdot \Pack(\Sing(K,\tau), 2\eps) \cdot \tau/\eps.
\end{equation}

\noindent \textbf{Step 3.}
Combining Equations \eqref{eq:hug1}, \eqref{eq:hug2} and the
comparison between packing and covering numbers
\eqref{eq:covering_packing}, we get
\begin{align*} \Pack(\Sing(K,\tau), 2\eps)
  &\leq \frac{C(d,\diam(K))}{\tau \eps^{d-2}}.
\end{align*}
Using the comparison between packing and covering numbers, this means
that we can cover $\Sing(K,\tau)$ with $N_\eps$ balls of radius
$\eps$, such that $N_\eps \leq C(d,\diam(K))/(\tau
\eps^{d-2})$. By definition of the Hausdorff measure, we have
\begin{equation*}
  \Haus^{d-2}(\Sing(K,\tau)) \leq \liminf_{\eps\to 0} N_\eps \eps^{d-2} \leq C(d,\diam(K)) \frac{1}{\tau}. \qedhere
  \end{equation*}
\end{proof}

\begin{proof}[Proof of Proposition~\ref{prop:HausSigma}]
  Given $\nr{\bm{\lambda}}_\infty \leq
  T_{tr}$, the  transversality condition \eqref{eq:T} implies
  $$ \forall x\in \Sigma(\bm{\lambda}),~~\exists u,v \in \Normal_x
  K(\bm{\lambda}),~~ \left(\frac{\sca{u}{v}}{\nr{u}\nr{v}}\right)^2
  \leq 1 -\eps_{tr}^2, $$ where $\Normal_x K(\bm{\lambda})$ is the
  normal cone to the convex set $K(\bm{\lambda})$ at $x$ (see
  \eqref{eq:Nor}). This implies that $\Sigma(\bm{\lambda})$ is
  included in the set $\Sing(K(\bm{\lambda}), \eps_{tr})$ of
  $\tau$-singular points 
  %defined in \eqref{eq:sigmaKTau}, 
  with $\tau =
  \eps_{tr}$. The conclusion then follows from
  Proposition~\ref{prop:hug}.
\end{proof}

%------------------------------------------------------

\section{$\Class^{2,\alpha}$ regularity of Kantorovich's functional}
\label{sec:Regularity}
This section is devoted to the proof of  the following regularity result. Recall that the conditions \eqref{eq:Reg}, \eqref{eq:Tw}, and \eqref{eq:MTW} are defined in the introduction, respectively on pages \pageref{eq:Reg}, \pageref{eq:Tw}, and \pageref{eq:MTW}.

\begin{theorem}
  \label{th:StrongRegularity}
  Assume \eqref{eq:Reg}, \eqref{eq:Tw}, and \eqref{eq:MTW}. Let $X$ be
  a compact, $c$-convex subset of $\Omega$ and $\rho$ in
  $\Probac(X) \cap \Class^{0,\alpha}(X)$ for $\alpha$ in $(0,1]$. Then,
   the Kantorovich's functional $\Phi$ is uniformly $\Class^{2,\alpha}$ on the
    set
\begin{equation} \mathcal{K}^\eps := \{ \psi: Y \to \Rsp \mid \forall y \in Y,~
  \rho(\Lag_y(\psi)) > \eps \} \label{eq:Keps}
  \end{equation}
and its Hessian is given by \eqref{eq:Hess}. In addition,
the $\Class^{2,\alpha}$ norm of the restriction of $\Phi$ to
$\mathcal{K}^\eps$ depends only on $\nr{\rho}_\infty$, $\eps$, $\diam(X)$,
and the constants defined in Remark~\ref{rem:Constants} below.
\end{theorem}

For the remainder of the section, for any point $y$ in $Y$, we will
denote $X_y = (\exp_y^c)^{-1}(X) \subseteq \Rsp^d$ the inverse image of
the domain $X$ in the exponential chart at $y$. The set $X_y$ is
convex by $c$-concavity of $X$. We consider the functions
$$f_{z,y}: p\in X_y \mapsto c(\exp_y^c(p), y) - c(\exp_y^c(p), z),$$
which are quasi-concave by \eqref{eq:MTW}. The main difficulty in deducing Theorem~\ref{th:StrongRegularity} from
Theorem~\ref{th:LocReg} is in establishing the quantitative
transversality condition \eqref{eq:T} introduced on p.\,\pageref{eq:T} for the family of functions
$(f_{z,y})_{z\in Y\setminus\{y\}}$.

\begin{remark}[Constants] \label{rem:Constants} The $\Class^{2,\alpha}$
  norm of the restriction of $\Phi$ to $\mathcal{K}^\eps$ explicitely
  depends on the following constants, whose finiteness (or positivity)
  follows from the compactness of the domain $X$, from the finiteness
  of the set $Y$ and from the conditions \eqref{eq:Reg},
  \eqref{eq:Tw}, and \eqref{eq:MTW}:
  \begin{align}
 \eps_{tw}&:=\min_{x\in X} \min_{y,z\in Y \atop {y\neq z}}
 \nr{D_xc(x,y) - D_c(x,z)}_{g} > 0 \notag\\ 
 C_\nabla &:=
 \max_{(x,y)\in X\times Y} \nr{D_x c(x,y)}_{g} < +\infty \notag\\ 
 \biLip
 &:= \max_{y\in Y} \max\left\{\nr{\exp_y^c}_{\Lip(X_y)},
 \nr{(\exp_y^c)^{-1}}_{\Lip(X)}\right\} < +\infty,\label{eq:constants}
  \end{align}
where we recall that %$c_y(x)=c(x, y)$ and 
$X_y := \exp_{y}^{-1}(X)$.
Our estimates will also rely on the following constants involving the
differential of the exponential maps. As before, the tangent spaces
$\Tang_x \Omega$ are endowed with the Riemannian metric $g$ from
$\Omega$. We set:
\begin{align*}
\Ccond &:= \max_{y \in Y} \max_{p \in X_y}
\cond(\restr{\D \exp_y^c}_{p}),\\
\Cdet &:= \max_{y\in Y} \nr{\det(\D\exp_y^c)}_{\Lip(X_y)},
\end{align*}
where $\cond(A)$ is the condition number of a linear transform
$A$ between finite dimensional normed spaces and $\det(A)$ is the determinant of $A$ with
respect to orthornormal bases. The quantitative transversality
estimates involve all the above constants in an explicit way, see \eqref{eq:EpsTRConstants}.
\end{remark}

\begin{remark}
  Even in the Euclidean case, one needs a lower bound on the volume of
  Laguerre cells in order to establish the second-differentiability of
  the functional $\Phi$. Indeed, let $y_\pm = \pm 1$, let $y_0 = 0$,
  $Y = \{ y_{-}, y_0, y_+ \} \subseteq \Rsp$. Consider the cost
  $c(x,y) = -xy$, and the density $\rho = 1$ on
  $X=[-\frac{1}{2},\frac{1}{2}]$. Let $\phi_\tau \in \Rsp^Y$ be
  defined by $\phi(y_{\pm}) = 1/2$ and $\phi(y_0) = \tau$. A simple
  calculation gives, for $\tau \geq 0$,
  \begin{align*}
    \frac{\partial\Phi}{\partial
      \one_{y_0}}(\psi_\tau) = \max(1-2\tau,0), 
  \end{align*}
  which is not differentiable at $\tau = 1/2$, even though \eqref{eq:Reg},
  \eqref{eq:Tw}, and \eqref{eq:MTW} are all satisfied.
\end{remark}

\noindent\textbf{Outline.}  In Section~\ref{subsec:lower_estimate}, we
establish a part of the transversality condition using elementary
properties of convex sets
(Proposition~\ref{prop:explicitlowerbound}). We  establish 
in Section \ref{subsec:upper-estimate} a second transversality condition 
using additional assumptions and proceed in
Section \ref{subsec:ProofRegularity} to the proof of Theorem
\ref{th:StrongRegularity}. In Section
\ref{subsec:alternative-bound}, we propose an alternative
transversality estimate when $Y$ is a sample subset of a target domain
$\Omega'$ (Proposition \ref{prop:explicitupperbound2}).

%-----------------------------------------
\subsection{Lower transversality estimates}\label{subsec:lower_estimate}
Next, we undertake a series of proofs to obtain explicit constants in
the transversality estimate \eqref{eq:T}, which depend on the choices
of cost, domains, and dimension. Consider the Laguerre cell of a point
$y$ in $Y$ in its own exponential chart, that is
$$L_y(\psi) := (\exp_y^c)^{-1}(\Lag_y(\psi)) = \{ p\in X_y \mid
f_{z,y}(p) \leq \psi(z) - \psi(y) \}.$$ The set $L_y(\psi)$ is the
intersection of sublevel sets of the functions $f_{z,y}$, and is
therefore a convex subset of $X_y$ by condition \eqref{eq:MTW}.
%% The transversality condition \eqref{eq:T} asks that, for any point $p$
%% in $L_y$ satisfying either one of the conditions \eqref{eq:eub1zz} or
%% \eqref{eq:eub1zX}, namely
%% \begin{align}
%%   &\exists z \neq z'\in Y\setminus\{y\},~~ f_{z,y}(p) = \lambda_{z,y}
%%   \hbox{ and } f_{z',y}(p) = \lambda_{z',y} \label{eq:eub1zz}\\ &
%%   \exists z\in Y\setminus\{y\},~~f_{z,y}(p) = \lambda_{z,y}, \hbox{
%%     and } p\in \partial X_y
%%   \label{eq:eub1zX},
%% \end{align}
%% %% \begin{align}
%% %%   &\exists z \neq z'\in Y\setminus\{y\},~~\exp_y^c(p) \in \Lag_z(\psi)\cap
%% %%   \Lag_{z'}(\psi) \label{eq:eub1zz}\\
%% %%   & \exists z\in Y\setminus\{y\},~~\exp_y^c(p) \in \Lag_z(\psi)\cap \partial X
%% %%   \label{eq:eub1zX},
%% %% \end{align}
%% it is possible to bound the scalar product between (some) unit normal
%% vectors to $L_y$ at $p$ away from $1$ and $-1$.
The first proposition establishes that two unit outer normals to $L_y$
with the same basepoint cannot be near-opposite. Recall the definition
of the normal cone from \eqref{eq:Nor}.

\begin{proposition}\label{prop:explicitlowerbound}
  Assume that $\psi$ lies in $\mathcal{K}^{\eps/2}$ (see
  \eqref{eq:Keps}). For any $y$ in $Y$, any point $p$ in $\partial
  L_y(\psi)$ and any unit normal vectors $v,w \in \Normal_p L_y(\psi)$
  one has
  \begin{align}\label{eq:awayfromopposite}
    \sca{v}{w} \geq -1+ \delta_0^2,
  \end{align}
  where $\delta_0 := \eps/(2^{d-1}\nr{\rho}_\infty{\biLip}^{2d}\diam{(X)}^{d}) \leq 1$.
\end{proposition}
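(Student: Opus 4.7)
The plan is to translate the target inequality into a lower bound on $\nr{v+w}$ and to derive this from the mass bound $\rho(\Lag_y(\psi)) > \eps/2$. Writing $\alpha := \nr{v+w}$ and $\beta := \nr{v-w}$, the identities $\alpha^2 + \beta^2 = 4$ and $2\sca{v}{w} = \alpha^2 - 2$ show that the conclusion $\sca{v}{w} \geq -1 + \delta_0^2$ is equivalent to $\alpha^2 \geq 2\delta_0^2$. Since $\delta_0 \leq 1$, the case $\alpha \geq \sqrt{2}$ gives $\sca{v}{w} \geq 0 \geq \delta_0^2 - 1$ for free, so one may assume $\alpha < \sqrt{2}$, which forces $\beta > \sqrt{2}$, and in particular $\beta > 0$.

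In the chart $X_y$, the idea is to choose an orthonormal basis of $\Rsp^d$ so that $v = (\alpha/2, \beta/2, 0, \ldots, 0)$ and $w = (\alpha/2, -\beta/2, 0, \ldots, 0)$. Since $v, w \in \Normal_p L_y(\psi)$, every $q \in L_y(\psi)$ satisfies $\sca{q-p}{v} \leq 0$ and $\sca{q-p}{w} \leq 0$. Writing $x = q - p$, these two half-space conditions become $(\alpha/2)x_1 \pm (\beta/2)x_2 \leq 0$, which together force $x_1 \leq 0$ and $\abs{x_2} \leq (\alpha/\beta)\abs{x_1}$. Using $L_y(\psi) \subseteq X_y$ and the bound $\diam(X_y) \leq \biLip\, \diam(X) =: D$ on each coordinate, a direct integration yields
\[
\Haus^d(L_y(\psi)) \leq \int_{-D}^{0}\int_{-(\alpha/\beta)\abs{x_1}}^{(\alpha/\beta)\abs{x_1}} (2D)^{d-2}\, dx_2\, dx_1 = 2^{d-2} D^d\,(\alpha/\beta).
\]

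The next step is to push this bound to $\Omega$ through the biLipschitz map $\exp_y^c$: since $\exp_y^c$ has Lipschitz constant at most $\biLip$ with respect to the Riemannian metric, one has $\Haus_g^d(\Lag_y(\psi)) \leq \biLip^d \Haus^d(L_y(\psi))$. Combining with $\rho(\Lag_y(\psi)) \leq \nr{\rho}_\infty \Haus_g^d(\Lag_y(\psi))$ and the hypothesis $\rho(\Lag_y(\psi)) > \eps/2$, one gets
\[
\alpha/\beta \,>\, \frac{\eps}{2^{d-1}\, \nr{\rho}_\infty\, \biLip^{2d}\, \diam(X)^d} \,=\, \delta_0.
\]
Since $\beta > \sqrt{2}$ in the relevant case, this gives $\alpha > \sqrt{2}\,\delta_0$ and hence $\sca{v}{w} = \alpha^2/2 - 1 > \delta_0^2 - 1$, as required.

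The main obstacle, such as it is, is choosing coordinates adapted to the pair $(v,w)$ so that the two simultaneous linear inequalities collapse cleanly to a ``thin strip'' of volume proportional to $\alpha/\beta$; after that the argument is bookkeeping. The biLipschitz constant $\biLip$ enters twice, once to bound $\diam(X_y)$ in terms of $\diam(X)$ and once in the volume transfer between the chart and $\Omega$, which accounts for the exponent $\biLip^{2d}$ appearing in the definition of $\delta_0$.
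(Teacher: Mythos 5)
Your proof is correct and follows essentially the same approach as the paper's: both bound $\Haus^d(L_y(\psi))$ by the measure of a wedge-shaped region determined by the two normal vectors, compare against the mass lower bound coming from $\psi\in\K^{\eps/2}$, and transfer volume/diameter estimates across $\exp_y^c$ to produce exactly the constant $\delta_0$. The only difference is cosmetic: the paper packages the wedge estimate as a standalone lemma on convex sets phrased via $\tan(\theta/2)$ with $\theta=\pi-\arccos\sca{v}{w}$, which equals your ratio $\nr{v+w}/\nr{v-w}$, so the two computations coincide.
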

The proof of this proposition follows from a general lemma about
convex sets. By convexity \eqref{eq:MTW}, the set $L_y(\psi)$ is
contained in an intersection of two half-spaces with outward normals
$v$ and $w$ at $p$, giving an upper bound on its volume in term of its
diameter and the angle between $v$ and $w$ (see
Figure~\ref{fig:lbvw}). On the other hand, we know that the volume of
$L_y(\psi)$ is bounded from below by a constant depending on $\eps$. Comparing
these bounds will give us the one-sided estimate
\eqref{eq:awayfromopposite}.

\begin{lemma} \label{lemma:lbvw}
  Let $K$ be a bounded convex set of $\Rsp^d$, let $p$ be a boundary
  point of $K$ and $v,w$ be two unit (outward) normal vectors to $K$
  at $p$. Then,
  $$ -1 + \delta^2_K \leq \sca{v}{w} \hbox{ where } \delta_K =
  \frac{\Haus^{d}(K)}{2^{d-2}\diam(K)^{d}} \leq 1.$$
\end{lemma}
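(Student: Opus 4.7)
The approach is to combine two containments of $K$ coming from the hypothesis: $K$ sits inside the dihedral wedge cut out by the two outward-normal half-spaces at $p$, and $K$ sits inside the ball $B(p, \diam(K))$. Setting $\eta := 1 + \sca{v}{w} \in [0, 2]$, the claim is equivalent to $\Haus^d(K) \leq 2^{d-2}\diam(K)^d\sqrt{\eta}$. The asserted bound $\delta_K \leq 1$ follows from the isodiametric inequality (one checks $\omega_d/2^{2d-2} \leq 1$ for every $d \geq 2$, where $\omega_d$ denotes the volume of the unit ball in $\Rsp^d$), and so the inequality is automatic as soon as $\eta \geq 1$; one may therefore assume $\eta \leq 1$.

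To implement this, I first observe that since $v, w$ are outward unit normals at $p \in K$, convexity gives $K \subseteq W := H_v \cap H_w \cap B(p, D)$, where $D = \diam(K)$ and $H_v = \{x : \sca{x-p}{v} \leq 0\}$, $H_w = \{x : \sca{x-p}{w} \leq 0\}$. In the $2$-plane $P$ through $p$ spanned by $v$ and $w$, the intersection $H_v \cap H_w$ is a planar angular sector at $p$ with opening angle $\theta_0 := \arccos(1 - \eta) \in [0, \pi]$. Globally $W$ is obtained by rotating the planar sector-$\cap$-disk about the complementary $(d-2)$-plane $P^\perp$, so decomposing coordinates as $\Rsp^d \simeq P \oplus P^\perp$ and switching to polar coordinates in $P$ gives the clean formula
\begin{equation*}
  \Haus^d(W) \;=\; \omega_{d-2}\int_0^{\theta_0}\!\!\int_0^D (D^2 - r^2)^{(d-2)/2}\,r\, dr\, d\phi \;=\; \frac{\omega_{d-2}}{d}\,\theta_0\, D^d,
\end{equation*}
where by convention $\omega_0 = 1$.

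To finish, I bound $\theta_0$ in terms of $\sqrt{\eta}$. From $\sin(\theta_0/2) = \sqrt{\eta/2}$ and the chord estimate $\arcsin(s) \leq \sqrt{2}\, s$ for $s \in [0, 1/\sqrt{2}]$ (valid by convexity of $\arcsin$ on $[0, 1]$), one obtains $\theta_0 \leq 2\sqrt{\eta}$ whenever $\eta \leq 1$. Plugging into the volume estimate yields
\begin{equation*}
  \Haus^d(K) \;\leq\; \Haus^d(W) \;\leq\; \frac{2\omega_{d-2}}{d}\, D^d\, \sqrt{\eta},
\end{equation*}
and a routine check using $\omega_{d-2} = \pi^{(d-2)/2}/\Gamma(d/2)$ confirms $2\omega_{d-2}/d \leq 2^{d-2}$ for every $d \geq 2$ (with equality at $d = 2$), closing the argument.

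The main obstacle is the intermediate volume computation: properly identifying $W$ with a generalized spherical wedge in the orthogonal splitting $\Rsp^d = P \oplus P^\perp$ and setting up the right polar integral requires some care, though the computation itself reduces to a one-variable integral once organized correctly. The other ingredients, namely the chord estimate for $\theta_0$ and the dimensional inequality $2\omega_{d-2}/d \leq 2^{d-2}$, are elementary.
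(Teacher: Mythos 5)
Your proof is correct, and while it shares the same skeleton as the paper's (enclose $K$ in the dihedral wedge at $p$, relate the wedge's volume to the angle $\theta$ and to $\diam(K)$, then bound the angle quantity by $\sqrt{1+\sca{v}{w}}$), the execution differs in several ways. The paper truncates the wedge by the rectangular cylinder $D\times[-\diam K,\diam K]^{d-2}$ (with $D$ a planar disc), then bounds the planar sector by an isoceles triangle, which gives $\Haus^d(K)\leq 2^{d-2}\diam(K)^d\tan(\theta/2)$ and uses $\tan(\theta/2)=\sqrt{(1+\sca{v}{w})/(1-\sca{v}{w})}\leq\sqrt{1+\sca{v}{w}}$ for $\sca{v}{w}\leq 0$; the constant $2^{d-2}$ appears automatically. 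You instead truncate by the ball $\B(p,\diam K)$, compute the wedge volume exactly via the Fubini/polar identity $\Haus^d(W)=\frac{\omega_{d-2}}{d}\theta_0\diam(K)^d$, bound $\theta_0$ via the chord estimate $\arcsin(s)\leq\sqrt2\,s$, and then check the dimensional inequality $2\omega_{d-2}/d\leq 2^{d-2}$ by hand. Your intermediate volume bound is sharper (the ball is contained in the paper's cylinder) but you pay with the extra constant verification, whereas the paper's cruder cylinder and triangle bounds are precisely calibrated to make $2^{d-2}$ emerge without any further computation. One small remark: the paper's opening reduction "LHS is non-positive" quietly presupposes $\delta_K\leq 1$; your explicit appeal to the isodiametric inequality to justify $\delta_K\leq 1$ before reducing to $\eta\leq 1$ is actually the more complete treatment of that point.
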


\begin{figure}
  \includegraphics{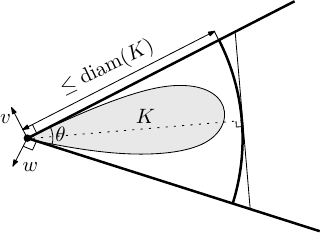}
  \caption{\label{fig:lbvw} Bound on the volume of a convex set $K$ as
    a function of the angle between two normal vectors $v,w$ at the
    same point and its diameter (see Lemma~\ref{lemma:lbvw}).}
\end{figure}

\begin{proof}
  The left-hand side of the inequality is non-positive, so the
  inequality needs only to be proven when $\sca{v}{w}\leq 0$, which we
  assume from now on. Making a rotation of axes and a translation if
  necessary, we assume that $p$ lies at the orign and that the unit
  vectors span the first two coordinates of $\Rsp^d$.  Then, letting
  $H :=\{p\mid \sca{p}{v}\leq 0\}$, $H':= \{p\mid \sca{p}{w}\leq 0\}$
  and $D$ be the two-dimensional disc centered at $0$ of radius
  $\diam(K)$, one has
\begin{equation*}
K \subseteq H\cap H'\cap (D\times [-\diam(K), \diam(K)]^{d-2}).
\end{equation*}
The intersection $H \cap H' \cap D$ is an angular sector of the disc
$D$, whose angle is equal to $\theta := \pi -
\arccos(\sca{v}{w})$ (see Figure~\ref{fig:lbvw}). Therefore, we have
\begin{align}
  \Haus^d(K) &\leq \Haus^d(H\cap H'\cap (D\times [-\diam(K), \diam(K)]^{d-2}))  \notag \\
  &\leq   2^{d-2}\diam(K)^d\tan(\theta/2). \label{ineq:lbvw1}
\end{align}
Using the expression of $\cos(\theta)$ in term of $\tan(\theta/2)$ and
recalling  $\sca{v}{w}\leq 0$, 
\begin{equation}
  \tan(\theta/2) = \sqrt{\frac{1+\sca{v}{w}}{1-\sca{v}{w}}} \leq \sqrt{1+\sca{v}{w}}
  \label{ineq:lbvw2}
\end{equation}
The lemma follows directly from Equations
\eqref{ineq:lbvw1}--\eqref{ineq:lbvw2}.
\end{proof}

\begin{proof}[Proof of Proposition~\ref{prop:explicitlowerbound}] 
  By definition of the bi-Lipschitz constant $\biLip$, 
  \begin{align*}
    \Haus^d(L_y(\psi)) \geq \eps/(2 \biLip^d\nr{\rho}_\infty)\hbox{ and }
    \diam(L_y(\psi)) \leq \biLip \diam(X).
  \end{align*}
  Applying the above lemma to the two outward normals $v,w$ at $p$, we get
  \begin{equation*}
    \sca{v}{w} + 1 \geq  \frac{\Haus^{d}(L_y(\psi))^2}{4^{d-2}\diam(L_y(\psi))^{2d}} \geq
    \frac{\eps^2}{4^{d-1}\biLip^{4d}\nr{\rho}_\infty^2\diam(X)^{2d}}.\qedhere
    \end{equation*}
\end{proof}

We also record the following lemma for later use.
\begin{lemma}
  Let $y$ in $Y$ and let $p$ in be a point of $L_y(\psi)$ such that
  for some $z\neq y$,~ $f_{z,y}(p) = \psi(z) - \psi(y)$. Then, the
  point $p' :=(\exp_z^c)^{-1}(\exp_y^c(p))$ belongs to $L_z(\psi)$ and the vector
  $\nabla f_{y,z}(p)$ lies in the normal cone $\Normal_{p'}
  L_z(\psi)$. \label{lemma:changeoflag}
 \end{lemma}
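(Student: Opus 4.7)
The plan is to unfold the definitions in order to move between the two exponential charts, and then use the quasi-convexity of $f_{y,z}$ (which is the same \eqref{eq:MTW}-type condition with the roles of $y$ and $z$ swapped) together with $\Class^{1,1}$-smoothness to obtain the normal-cone inclusion.

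First I would set $x := \exp_y^c(p) \in X$ and unfold the assumptions. The condition $p\in L_y(\psi)$ becomes $c(x,y)+\psi(y) \leq c(x,w)+\psi(w)$ for every $w\in Y$, while $f_{z,y}(p) = \psi(z)-\psi(y)$ is just $c(x,y)-c(x,z) = \psi(z)-\psi(y)$, i.e. $c(x,y)+\psi(y) = c(x,z)+\psi(z)$. Combining these two gives $c(x,z)+\psi(z) \leq c(x,w)+\psi(w)$ for every $w\in Y$, so $x \in \Lag_z(\psi)$. Since $x\in X$, its preimage $p' = (\exp_z^c)^{-1}(x)$ lies in $X_z$ and satisfies $p'\in L_z(\psi)$. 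A direct computation also yields $f_{y,z}(p') = c(x,z)-c(x,y) = \psi(y)-\psi(z)$, so the constraint defining $L_z(\psi)$ associated with $y$ is active at $p'$.

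For the normal-cone statement I would argue as follows. By \eqref{eq:MTW}, the function $f_{y,z}$ is quasi-convex on the convex set $X_z$, so its sublevel set $S := \{q \in X_z : f_{y,z}(q) \leq \psi(y)-\psi(z)\}$ is convex, contains $L_z(\psi)$, and contains $p'$ on its boundary. Pick any $q \in L_z(\psi)$; by $c$-convexity of $X$ the segment $[p',q]$ lies in $X_z$, and along this segment quasi-convexity gives
\[
f_{y,z}((1-t)p' + tq) \leq \max(f_{y,z}(p'), f_{y,z}(q)) = f_{y,z}(p'),
\]
for every $t\in[0,1]$. Dividing by $t>0$ and letting $t\to 0^+$, the $\Class^1$-regularity of $f_{y,z}$ yields $\langle \nabla f_{y,z}(p'), q-p'\rangle \leq 0$. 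Since this holds for every $q\in L_z(\psi)$, by definition of the normal cone we conclude $\nabla f_{y,z}(p') \in \Normal_{p'} L_z(\psi)$, which is the desired statement.

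There is no genuine obstacle here; the only subtlety is purely notational, namely to keep track of which chart each object lives in when swapping $y\leftrightarrow z$ (in particular, that the gradient one has to consider is $\nabla f_{y,z}(p')$ in the $z$-chart, not $\nabla f_{z,y}(p)$ in the $y$-chart, even though these two quantities are related through the differential of $(\exp_z^c)^{-1}\circ \exp_y^c$).
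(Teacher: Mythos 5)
Your proof is correct, and it is actually more complete than the one in the paper. The paper's own proof of this lemma consists only of your first paragraph: it unfolds the active-constraint condition at $x=\exp_y^c(p)$ to show $x\in\Lag_z(\psi)$, hence $p'\in L_z(\psi)$, and then stops; the normal-cone assertion is stated in the lemma but never argued (presumably viewed as immediate). Your second paragraph supplies the missing first-order argument cleanly: since $L_z(\psi)\subseteq X_z$ with $X_z$ convex, and $f_{y,z}$ is quasi-convex on $X_z$ with $f_{y,z}(p')=\psi(y)-\psi(z)\geq f_{y,z}(q)$ for every $q\in L_z(\psi)$, evaluating along the segment $[p',q]$ and differentiating at $t=0^+$ gives $\sca{\nabla f_{y,z}(p')}{q-p'}\leq 0$, which is exactly the definition \eqref{eq:Nor} of $\Normal_{p'}L_z(\psi)$. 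You are also right to flag (and silently correct) the notational slip in the statement: as written the lemma says $\nabla f_{y,z}(p)$, but $f_{y,z}$ is defined on $X_z$ while $p\in X_y$, so the gradient must be taken at $p'$; this is in fact how the lemma is used later in the proofs of Proposition~\ref{prop:explicitupperbound1}, where the vectors $V_i=\nabla f_{y_0,y_i}(p_i)$ are evaluated at the transferred point $p_i\in X_{y_i}$.
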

 \begin{proof}
   We introduce the point $x = \exp_y^c(p)$.  The hypothesis is
   equivalent to $c(x,y) + \psi(y) = c(x,z) + \psi(z).$ Since $p$
   belongs to $L_y(\psi)$, the point $x$ belongs to $\Lag_y(\psi)$. Then, for any $z'\in Y$,
   \begin{align*}
    % &\forall z' \in Y,\ 
    c(x,z) + \psi(z)&= c(x,y) + \psi(y) \leq c(x,z') + \psi(z')\\
   %  &\qquad\Longrightarrow \forall z' \in Y, c(x,z) + \psi(z) \leq c(x,z') + \psi(z'),
   \end{align*}
   thus establishing that $x$ belongs to $\Lag_z(\psi)$ or equivalently $p'\in L_z(\psi)$.
 \end{proof}

%-----------------------------------------
\subsection{Upper transversality estimates.}\label{subsec:upper-estimate}
We now turn to the proof of the quantitative transversality
estimates. We begin with a bound which involves the condition
number of differential of exponential maps, see
Remark~\ref{rem:Constants}.  The advantage of this first bound is that
we do not have to assume that the points in $Y$ are sampled from a
continuous domain. A second transversality estimate is presented in
\S\ref{subsec:regularitymtw}. 

\subsubsection*{Notation.}
We introduce notation that will be used in throughout this section. We
fix a point $y_0$ in $Y$ and also fix an arbitrary ordering of the
remaining points, so that  $Y = \{y_0, y_1,\hdots,y_{N}\}$. We
define $\Xloc := X_{y_0}$ and for every index $i \in\{1,\hdots,N\}$ we put
\begin{align*}
  f_i:=f_{y_i,y_0}: p \in \Xloc &\mapsto  c(\exp_{y_0}^c(p), y_0) - c(\exp_{y_0}^c(p), y_i).
\end{align*}
By the \eqref{eq:Tw} condition, these functions $f_1,\hdots,f_N$
satisfy the non-degeneracy condition \eqref{eq:ND}, and we have the
following inequalities:
\begin{align}
  &\eps_{nd}:=\min_{i,j \neq 0} \min_{p \in X_{y_0}} \nr{\nabla
    f_i(p) -\nabla f_j(p) } \geq \biLip^{-1} \eps_{tw} > 0,\label{eq:gradlip}\\
\label{eq:gradlength}
&\sup_{i \neq 0} \sup_{p\in X_{y_0}} \nr{\nabla f_{i}(p)} \leq \biLip C_\nabla.%-c(\exp^c_{i}(\cdot), y_j)}_{Lip(\coord{\Omega}{i})}
\end{align}
% for the constant $\eps_{nd} =\eps_{tw}/\biLip$. 
 To any function $\psi:Y\to\Rsp$ we associate the vector
\begin{align}\label{eq:lambda_psi}
\bm{\lambda}_\psi :&= (\psi(y_1) - \psi(y_0), \hdots, \psi(y_N) -
  \psi(y_0)) \in \Rsp^N.
  \end{align}
We also consider the same family of convex set as in
Section~\ref{section:localregularity}:
\begin{align*}
  K(\bm{\lambda}) =
  \{ p \in \Xloc \mid \forall 1\leq i\leq N, f_i(p) \leq \lambda_{i} \},
\end{align*}
so that  $K(\bm{\lambda}_\psi) = (\exp_{y_0}^c)^{-1}(\Lag_{y_0}(\psi))$.
%so that $G_{y_0}(\psi) = \Gloc(\bm{\lambda}_\psi)$ (as defined in \eqref{eq:G}). 

%------
\begin{proposition}\label{prop:explicitupperbound1}
  Assume that $\bm{\lambda} := \bm{\lambda}_\psi$ where $\psi$ belongs
  to $\mathcal{K}^{\eps/2}$ and let $p$ be a point in $K(\bm{\lambda})$. Then,
  \begin{description}
  \item[\textbf{Case I}] If $f_i(p) = \lambda_i$ and $f_j(p) =
    \lambda_j$ for  $i\neq j$ in $\{1,\hdots,N\}$, then
    \begin{equation}\label{eq:awayfromcolinear1a}
      \left(\frac{\sca{\nabla f_i(p)}{\nabla f_j(p)}}
           {\nr{\nabla f_i(p)}\nr{\nabla f_j(p)}}\right)^2\leq 1- \delta_1^2.
    \end{equation}
  \item[\textbf{Case II}] If $p \in \partial \Xloc$ and if
    $f_i(p) = \lambda_i$ for some $i$ in $\{1,\hdots,N\}$, then
  \begin{equation}\label{eq:awayfromcolinear1b}
    \forall w\in \Normal_p \Xloc, ~~\left(\frac{\sca{\nabla f_{i}(p)}{w}}
    {\nr{\nabla f_{i}(p)}\nr{w}}\right)^2\leq 1- \delta_1^2.
  \end{equation}
  \end{description}
  In the above inequalities,
   $$\delta_1 :=  \frac{\eps_{nd}\delta_0}{2 \biLip C_\nabla\Ccond^2}.$$
\end{proposition}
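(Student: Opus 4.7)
The plan is to reduce both inequalities to Proposition~\ref{prop:explicitlowerbound} applied in a different $c$-exponential chart. First, a chain-rule computation combined with Lemma~\ref{lemma:changeoflag} shows that the convex Laguerre cell $L_{y_0}(\psi)\subseteq \Xloc$ corresponds under the diffeomorphism $\phi_i:=(\exp^c_{y_i})^{-1}\circ \exp^c_{y_0}$ to the convex cell $L_{y_i}(\psi)\subseteq X_{y_i}$, with outward normals at $p':=\phi_i(p)$ expressed via the linear map $M_i:=(D\phi_i^{-1}(p'))^T$: the facet-$y_0$ normal equals $-M_i\nabla f_i(p)$, the facet-$y_j$ normal equals $M_i(\nabla f_j-\nabla f_i)(p)$, and $M_i$ sends $\Normal_p\Xloc$ onto $\Normal_{p'}X_{y_i}$. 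Crucially, $\cond(M_i)\leq \Ccond^2$.

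For Case I, set $\cos\theta:=\sca{\nabla f_i(p)}{\nabla f_j(p)}/(\nr{\nabla f_i(p)}\nr{\nabla f_j(p)})$. If $\cos\theta\leq 0$, then applying Proposition~\ref{prop:explicitlowerbound} in chart $y_0$ (where both gradients are outward normals to $L_{y_0}(\psi)$ at $p$) gives $\cos\theta\geq -1+\delta_0^2$, so $\cos^2\theta\leq 1-\delta_0^2\leq 1-\delta_1^2$. If $\cos\theta>0$, WLOG $\nr{\nabla f_j(p)}\geq \nr{\nabla f_i(p)}$ (otherwise swap $i,j$ and use chart $y_j$), and I distinguish two regimes. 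If $\nr{\nabla f_j}-\nr{\nabla f_i}\geq \eps_{nd}/\sqrt{2}$, then in chart $y_i$ the vector $v:=M_i(\nabla f_j-\nabla f_i)$ becomes, under near-parallelism, approximately a positive multiple of $M_i\nabla f_i=-u$, hence nearly antiparallel to $u$ up to distortion by $\cond(M_i)\leq \Ccond^2$, contradicting Proposition~\ref{prop:explicitlowerbound} in chart $y_i$. Otherwise, the identity
\begin{equation*}
\nr{\nabla f_j-\nabla f_i}^2=(\nr{\nabla f_j}-\nr{\nabla f_i})^2+2\nr{\nabla f_i}\nr{\nabla f_j}(1-\cos\theta),
\end{equation*}
together with $\nr{\nabla f_j-\nabla f_i}\geq \eps_{nd}$ from \eqref{eq:gradlip} and $\nr{\nabla f_i},\nr{\nabla f_j}\leq \biLip C_\nabla$ from \eqref{eq:gradlength}, forces $1-\cos\theta$ to be bounded below by a constant multiple of $\eps_{nd}^2/(\biLip C_\nabla)^2$, which directly yields $\cos^2\theta\leq 1-\delta_1^2$.

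Case II is handled analogously. The subcase $\sca{\nabla f_i(p)}{w}\leq 0$ reduces immediately to Proposition~\ref{prop:explicitlowerbound} Case II in chart $y_0$, since $\nabla f_i(p)\in \Normal_p L_{y_0}(\psi)$ and $w\in \Normal_p\Xloc$. For the subcase $\sca{\nabla f_i(p)}{w}>0$ with near-parallelism, switch to chart $y_i$: the facet-$y_0$ normal at $p'$ is $-M_i\nabla f_i(p)$ and $M_iw\in \Normal_{p'}X_{y_i}$, and near-parallelism between $\nabla f_i$ and $w$ converts (via the minus sign and the matrix $M_i$) to near-antiparallelism between $-M_i\nabla f_i$ and $M_iw$ up to the $\Ccond^2$ distortion, contradicting Proposition~\ref{prop:explicitlowerbound} Case II in chart $y_i$. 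No regime split is required here, as no length constraint analogous to Case I arises.

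The main obstacle is the quantitative angle-distortion estimate under $M_i$: concretely, I need that if one vector is approximately a positive scalar multiple of another in chart $y_0$, then applying $-M_i$ and $M_i$ respectively produces vectors in chart $y_i$ that are approximately antiparallel, with error controlled by $\cond(M_i)\leq \Ccond^2$. Isolating the dominant direction from the transverse perturbation and carefully tracking singular values of $M_i$ through the inner-product computation produces the stated constant $\delta_1=\eps_{nd}\delta_0/(2\biLip C_\nabla\Ccond^2)$.
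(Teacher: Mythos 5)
Your plan is essentially the paper's argument: transfer to the exponential chart of $y_i$ via $M_i$, apply Proposition~\ref{prop:explicitlowerbound} in that chart, and control angle distortion through the condition number. The ``main obstacle'' you flag --- translating near-parallelism before $M_i$ into near-antiparallelism after $M_i$ up to a $\Ccond^4$ loss --- is exactly Lemma~\ref{lemma:wiedlandt} (the generalized Wiedlandt inequality), so that piece of machinery is already available and you would not need to ``carefully track singular values'' from scratch.

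The one genuine departure is the splitting criterion in Case~I. The paper splits on the sign of $\sca{W-V}{V}$ (with $V=\nabla f_i(p)$, $W=\nabla f_j(p)$): when $\sca{W-V}{V}\leq 0$ one gets $1-\sca{v}{w}\geq \tfrac{1}{2}\|W-V\|^2/\|W\|^2$ by expanding $\tfrac{1}{2}\|v-w\|^2$, and when $\sca{W-V}{V}\geq 0$ one bounds $\sca{v}{w}^2$ directly and transfers to chart $y_i$. You instead split on whether $\|W\|-\|V\|\geq \eps_{nd}/\sqrt{2}$ and present the chart-transfer half as a proof by contradiction. Both splits work: in your ``small difference'' regime the identity $\|W-V\|^2 = (\|W\|-\|V\|)^2 + 2\|V\|\|W\|(1-\cos\theta)$ does force $1-\cos\theta\geq \eps_{nd}^2/(4\biLip^2 C_\nabla^2)\geq \delta_1^2$, and in the ``large difference'' regime, if $\cos^2\theta>1-\delta_1^2$ then $\cos\psi=(\|W\|\cos\theta-\|V\|)/\|W-V\|$ is close enough to $1$ (using $\|W\|^2\sin^2\theta/\|W-V\|^2\leq \biLip^2C_\nabla^2\delta_1^2/\eps_{nd}^2=\delta_0^2/(4\Ccond^4)$) that Lemma~\ref{lemma:wiedlandt} pushes $1+\sca{V_i}{W_i}/(\|V_i\|\|W_i\|)$ below $\delta_0^2$, contradicting Proposition~\ref{prop:explicitlowerbound}. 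The paper's split is somewhat cleaner because $\sca{W-V}{V}\geq 0$ is exactly the hypothesis needed to bound $\sca{W-V}{v}^2/\|W-V\|^2$ by $\sca{W-V}{v}/\|W-V\|$, which makes the direct (non-contradiction) inequality chain close in one line. Your Case~II is the paper's argument presented in contrapositive form; you should cite Lemma~\ref{lem:imageofconvexsetnormal} for the claim that $M_i$ carries $\Normal_p\Xloc$ onto $\Normal_{p'}X_{y_i}$, and note that one then needs $\Normal_{p'}X_{y_i}\subseteq\Normal_{p'}L_{y_i}(\psi)$ (which holds since $L_{y_i}(\psi)\subseteq X_{y_i}$ and both contain $p'$) to apply Proposition~\ref{prop:explicitlowerbound}.
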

By assumption, all the Laguerre cells associated to $\psi$ contain a
mass of at least $\eps/2$.  This allows us to apply
Proposition~\ref{prop:explicitlowerbound}, ensuring that normal
vectors cannot be near-opposite, to all the Laguerre cells in their
exponential charts. We denote $L_i := L_{y_i}(\psi) =
(\exp_{y_i}^c)^{-1}(\Lag_{y_i}(\psi))$ for brevity.

The proposition also relies on two simple lemmas. The first lemma
shows the effect of a diffeomorphism on the normal cone to a convex
set, when its image is also convex.

\begin{lemma}\label{lem:imageofconvexsetnormal}
Let $K\subset \Rsp^d$ be a compact, convex set, $F$ be a $\Class^1$
diffeomorphism from an open neighborhood of $K$ to an open subset of
$\Rsp^d$, and assume that $F(K)$ is also a convex set. Then, for
any point $x$ in $\partial K$ one has
$$ \Normal_{F(x)}(F(K)) = [\D F^{-1}_{F(x)}]^*(\Normal_x
K),$$ where $A^*$ denotes the adjoint of $A$.
\end{lemma}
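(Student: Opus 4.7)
My plan is to prove the equality by proving the two inclusions separately, both times exploiting convexity (of $K$ for one direction, of $F(K)$ for the other) to parameterize segments that can be differentiated along at their endpoint. Recall that the normal cone to a convex set $C$ at a boundary point $z$ is
\[
 \Normal_z C = \{v \in \Rsp^d \mid \sca{v}{y-z} \leq 0,\ \forall y \in C\},
\]
so the inequality defining membership in $\Normal_z C$ needs only to be checked against boundary points, and via convexity, only against boundary points that can be reached from $z$ along a segment in $C$.

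\smallskip

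\noindent\textbf{Inclusion $[\D F^{-1}_{F(x)}]^*(\Normal_x K)\subseteq \Normal_{F(x)}(F(K))$.}
Let $v\in \Normal_x K$, set $w:=[\D F^{-1}_{F(x)}]^* v$, and pick any $y'=F(y)\in F(K)$ with $y\in K$. By convexity of $F(K)$, the segment $F(x)+t(y'-F(x))$ lies in $F(K)$ for $t\in[0,1]$, so we may write
\[
 z_t := F^{-1}\bigl(F(x)+t(y'-F(x))\bigr) \in K.
\]
Differentiability of $F^{-1}$ at $F(x)$ yields
\[
 z_t = x + t\,\D F^{-1}_{F(x)}(y'-F(x)) + o(t)\quad (t\to 0^+).
\]
Since $z_t\in K$ and $v\in \Normal_x K$, the inequality $\sca{v}{z_t - x}\leq 0$ holds; dividing by $t>0$ and letting $t\to 0^+$ gives $\sca{v}{\D F^{-1}_{F(x)}(y'-F(x))}\leq 0$, i.e.\ $\sca{w}{y'-F(x)}\leq 0$. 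Because $y'\in F(K)$ was arbitrary, $w\in \Normal_{F(x)}(F(K))$.

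\smallskip

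\noindent\textbf{Reverse inclusion.} This follows by symmetry: swap the roles of $F$ and $F^{-1}$, of $K$ and $F(K)$, and of $x$ and $F(x)$. Applying the computation above to the $\Class^1$ diffeomorphism $F^{-1}$ on a neighborhood of $F(K)$, whose image $F^{-1}(F(K))=K$ is also convex, and then inverting the adjoint isomorphism using the chain rule identity $\D F^{-1}_{F(x)}\circ \D F_x = \Id$, gives $\Normal_{F(x)}(F(K))\subseteq [\D F^{-1}_{F(x)}]^*(\Normal_x K)$. Together the two inclusions prove the lemma. The argument has no real obstacle; the only subtlety is that one must not try to take $\D F$ directly of arbitrary chords in $K$ (which are not differentiation targets) but rather parameterize the chord, let the parameter tend to zero, and exploit the convexity of the \emph{other} set to ensure the parameterized curve stays in the set of interest.
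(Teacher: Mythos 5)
Your proof is correct and is essentially the same argument as the paper's: both push a segment of $F(K)$ (obtained via the convexity of $F(K)$) back through $F^{-1}$ to land in $K$, apply the normal-cone inequality for $v$ at $x$, and Taylor-expand in the segment parameter before taking $t\to 0^+$. The paper packages the computation as the first-order expansion of the auxiliary scalar function $\phi(z):=\sca{F^{-1}(z)-x}{v}$ at $F(x)$, whereas you expand the pulled-back curve $z_t$ directly, but $\sca{v}{z_t-x}$ is literally $\phi$ evaluated along the segment, so the two write-ups coincide; both then dispatch the reverse inclusion by symmetry.
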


\begin{proof}
  Consider $x$ in $\partial K$, $v \in \Normal_x K$, and define
  $\phi(z):=\sca{F^{-1}(z)-x}{v}$. Since $v$ is an outer normal to $K$
  at $x$, the restriction of $\phi$ to the set $F(K)$ is
  non-positive. Since $F(K)$ is convex, for any point $y\in K$, $F(K)$
  contains the segment $[F(x),F(y)]$. We therefore have
  \begin{align*}
 0&\geq \phi((1-t)F(x)+tF(y)) \\&\geq \phi(F(x))+t\sca{\nabla
   \phi(F(x))}{F(y)-F(x)}-o(t)\\ &=t\sca{[\D F^{-1}_{F(x)}]^*(v)}{F(y)-F(x)}-o(t).
  \end{align*}
  where we have used $\phi(F(x))=0$ and $\nabla
  \phi(F(x))=[\D F^{-1}_{F(x)}]^*(v)$ to obtain the equality at the end. Dividing by $t$ and
  taking the limit as $t$ goes to zero, we see that
  $$\forall y\in K,~~
  \sca{[\D F^{-1}_{F(x)}]^*(v)}{F(y)-F(x)} \leq 0, $$ thus
  showing that $[\D F^{-1}_{F(x)}]^*(v)$ belongs to the normal
  cone to $F(K)$ at $F(x)$. The converse inclusion follows from
  the symmetry of the problem.
\end{proof}

The second lemma compares the angle between two vectors and the angle
between their image under a linear map, using the generalized
Wiedlandt inequality (see \cite[Section 3.4]{householder1975}).  We
identify $\Rsp^d$ with its tangent and cotangent spaces through the
Euclidean structure. We denote the adjoint of the derivative of the
exponential map $\exp^c_y$ at a point $p$ in $X_y$ by
\begin{align*}
(\D \exp^c_{y_i})^*\vert_{p}:  \Tang^*_{\exp^c_{y_i}(p)}\Omega\to \Tang^*_{p}\Rsp^d\cong\Rsp^d,
\end{align*}

%-----
\begin{lemma}\label{lemma:wiedlandt}
  Let $y_k \neq y_\ell \in Y$, let $x$ be a point in $X$ and set $p_{k}
  := (\exp_{y_k}^c)^{-1}(x)$ and $p_\ell := (\exp_{y_\ell}^c)^{-1}(x)$ and
  $$A =
  (\D \exp^c_{y_k}\vert_{p_k})^*\circ [(\D 
    \exp^c_{y_l}\vert_{p_l})^*]^{-1}: \Tang_{p_\ell}^* \Rsp^d \to \Tang_{p_k}^* \Rsp^d.$$
  Then, the following inequalities hold for all $v$, $w$ in $\Rsp^d$:
   \begin{align*}
%     &\Ccond^{-8} \left(1-\frac{\sca{v}{w}}{\nr{v}\nr{w}}\right)\leq 1 - \frac{\sca{A v}{Aw}}{\nr{Av}\nr{Aw}}
%      \leq \Ccond^8\left(1-\frac{\sca{v}{w}}{\nr{v}\nr{w}}\right) \\
     \Ccond^{-4}\left(1+\frac{\sca{v}{w}}{\nr{v}\nr{w}}\right) \leq 1 + \frac{\sca{A v}{Aw}}{\nr{Av}\nr{Aw}}
     \leq \Ccond^4\left(1+\frac{\sca{v}{w}}{\nr{v}\nr{w}}\right).
   \end{align*}
   \end{lemma}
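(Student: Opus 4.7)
My plan is to reduce the desired double inequality to an elementary matrix-norm estimate on $\nr{A(v+w)}$, bypassing the Wielandt machinery. By homogeneity of both sides in $v$ and $w$ separately, I would first assume $\nr{v}=\nr{w}=1$, so that $\nr{v+w}^2 = 2(1+\sca{v}{w})$. The key is the algebraic identity
\begin{equation*}
\nr{Av+Aw}^2 = (\nr{Av}-\nr{Aw})^2 + 2\nr{Av}\nr{Aw}(1+\cos\theta'),
\end{equation*}
where $\cos\theta' := \sca{Av}{Aw}/(\nr{Av}\nr{Aw})$, which follows from expanding the left-hand side and completing the square. Dropping the nonnegative first term and combining with the bound $\nr{A(v+w)}^2 \leq \sigma_{\max}(A)^2 \nr{v+w}^2$ gives
\begin{equation*}
1+\cos\theta' \leq \frac{\sigma_{\max}(A)^2}{\nr{Av}\nr{Aw}}\,(1+\sca{v}{w}) \leq \cond(A)^2\,(1+\sca{v}{w}),
\end{equation*}
where the last inequality uses $\nr{Av}\nr{Aw}\geq \sigma_{\min}(A)^2$.

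Next I would estimate $\cond(A)\leq \Ccond^2$: since $A$ is the composition $(\D\exp^c_{y_k}\vert_{p_k})^*\circ [(\D\exp^c_{y_\ell}\vert_{p_\ell})^*]^{-1}$ and $\cond(B^*)=\cond(B^{-1})=\cond(B)$, submultiplicativity of the condition number under composition yields $\cond(A)\leq \Ccond\cdot\Ccond = \Ccond^2$, and hence $\cond(A)^2 \leq \Ccond^4$. This establishes the upper bound of the lemma.

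For the lower bound, I would apply the upper bound to $A^{-1}$ (which has the same condition number $\cond(A)$) with inputs $Av/\nr{Av}$ and $Aw/\nr{Aw}$: these are unit vectors whose Euclidean angle equals $\theta'$, and whose images under $A^{-1}$ lie along $v$ and $w$ respectively, so the resulting angle is $\theta$. This yields $1+\sca{v}{w}\leq \cond(A)^2(1+\cos\theta')$, which rearranges to $1+\cos\theta' \geq \Ccond^{-4}(1+\sca{v}{w})$, finishing the proof.

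The only substantive step is spotting the identity relating $\nr{Av+Aw}^2$ to $1+\cos\theta'$; once it is in hand, the bound is immediate from standard matrix-norm estimates and the composition structure of $A$. The generalized Wielandt inequality mentioned in the lemma's statement provides one classical route to the same conclusion (via the $P:=A^*A$ angle formulation and a $\tan$-bound with constant $\sqrt{\cond(P)}=\cond(A)$), but the direct expansion above avoids it entirely.
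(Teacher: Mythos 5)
Your proof is correct, and it takes a genuinely different and more elementary route than the paper. The paper's argument cites the generalized Wielandt inequality from Householder in its tangent form, $(1/\cond(A))\tan(\theta/2) \leq \tan(\theta'/2) \leq \cond(A)\tan(\theta/2)$, and then converts the resulting bound on $\tan(\theta'/2)$ into a bound on $1+\cos\theta'$ via the identity $1+\cos\theta' = 2/(1+\tan^2(\theta'/2))$. By contrast, you bypass Wielandt entirely: the identity $\nr{Av+Aw}^2 = (\nr{Av}-\nr{Aw})^2 + 2\nr{Av}\nr{Aw}(1+\cos\theta')$ (a completed-square rewrite of the polarization identity), together with the crude singular-value bounds $\sigma_{\min}(A)\nr{u}\leq\nr{Au}\leq\sigma_{\max}(A)\nr{u}$, already gives $1+\cos\theta'\leq\cond(A)^2(1+\cos\theta)$, and the symmetric inequality follows by applying the same estimate to $A^{-1}$ with the unit vectors $Av/\nr{Av}$ and $Aw/\nr{Aw}$. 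Both routes land on the same constant $\cond(A)^2\leq\Ccond^4$ (using $\cond(B^*)=\cond(B^{-1})=\cond(B)$ and submultiplicativity under composition), so neither is sharper, but your argument is self-contained and avoids appealing to a nontrivial external theorem. This buys transparency at no cost; the paper's version buys brevity by leaning on a classical reference.
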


\begin{proof}
%%  which implies that for any linear transform
%% $A:\Rsp^d\to\Rsp^d$:
%%  \begin{equation}
%%    \forall v,w \in \Rsp^d,\qquad 1 - \frac{\abs{\sca{Av}{Aw}}}{\nr{Av}{\nr{Aw}}} \leq \cond(A)^2\left(1- \frac{\abs{\sca{v}{w}}}{\nr{v}{\nr{w}}}\right)
%%  \end{equation}
Indeed, let $\theta$ be the angle between $v$ and $w$ and $\theta'$ be
the angle between $Av$ and $Aw$, both in the interval $(0,\pi)$. Let
$t := \tan(\theta/2)$, $t' := \tan(\theta'/2)$. The generalized Wiedlandt
inequality in \cite[Section 3.4]{householder1975} asserts $(1/\cond(A)) t \leq t' \leq \cond(A) t$. % where $\kappa= \cond(A)$.  
Expressing $\cos(\theta)$ in term of $t =
\tan(\theta/2)$,
 \begin{align*}
   1 + \cos(\theta') = 1 + \frac{1-t'^2}{1+t'^2} = \frac{2}{1+t'^2} \leq \cond(A)^2(1+\cos(\theta))
 \end{align*}
 %% \begin{align*}
 %%   1 - \cos(\theta') &= 1 - \frac{1-t'^2}{1+t'^2} 
 %%   %= \frac{(1+t^2)^2 - (1-t^2)^2}{(1+t^2)^2}
 %%   =  \frac{2 t'^2}{1+t'^2} \\
 %%   &\leq \frac{2\kappa^2 t^2}{1+\kappa^{-2}t^2}
 %%   \leq 2\kappa^4 \frac{t^2}{1+t^2} = \kappa^4(1-\cos(\theta)).
 %%   \end{align*}
 We conclude the second inequality above by using $\cond(A_2^* [A_1^*]^{-1}) \leq
 \cond(A_1)\cond(A_2)$ and the definition of the constant
 $\Ccond$. For the first inequality, simply note that $\cond(A^{-1})=\cond(A)$.
 \end{proof}

\begin{proof}[Proof of Proposition~\ref{prop:explicitupperbound1}, case I]
   We let 
   \begin{align*}
   V :&= \nabla f_i(p) = \nabla f_{y_i,y_0}(p),\\
    W :&=\nabla f_j(p) = \nabla f_{y_j,y_0}(p),\\
     v :&= \frac{V}{\nr{V}},\quad w :=\frac{W}{\nr{W}}.
     \end{align*}
      Switching the indices $i$ and $j$ if necessary, we
   assume that $\nr{V} \leq \nr{W}$. The proof depends on the sign of
   $\sca{W-V}{V}$ (see Remark \ref{rmk:upperestremark} below for the significance of the sign of this expression).  Assume first $\sca{W - V}{V} \leq 0$, and
   let $\alpha_v := 1/\nr{V}, \alpha_w := 1/\nr{W}$. Then,
  \begin{align*}
    1-\sca{v}{w} &=  \frac{1}{2}\nr{v - w}^2 
    = \frac{1}{2}\nr{\alpha_w(W - V) -
      (\alpha_v -\alpha_w) V}^2 \\
    &= \frac{1}{2}\alpha_w^2 \nr{W - V}^2 +
    \frac{1}{2}(\alpha_v - \alpha_w)^2 \nr{V}^2-\alpha_w(\alpha_v - \alpha_w) \sca{W - V}{V}
  \end{align*}
  Using $\alpha_w \leq \alpha_v$, and $\nr{W-V} \geq \eps_{nd}$ we
  end up with
  \begin{align*}
    1-\sca{v}{w}^2 &\geq 1-\sca{v}{w} \geq \frac{1}{2}\alpha_w^2 \nr{W
      - V}^2 \\
    &\geq \frac{1}{2} \frac{\eps_{nd}^2}{\biLip^2 C_\nabla^2}
    \geq \frac{\eps_{nd}^2\delta_0^2}{4 \biLip^2 C_\nabla^2\Ccond^4}= \delta_1^2
  \end{align*} where we have used \eqref{eq:gradlip} and
  \eqref{eq:gradlength}, $\delta_0\leq 1$ and $\Ccond\geq 1$. This
  establishes the desired bound when $\sca{v}{w} \in [0,1]$.  In the
  case $\sca{v}{w} \in [-1,0]$, we can apply
  Proposition~\ref{prop:explicitlowerbound} to show that
  $1-\sca{v}{w}^2 \geq 1+\sca{v}{w} \geq \delta_0^2 \geq \delta_1^2,$
  thus establishing the desired bound.

%---
 Now suppose $\sca{W - V}{V} \geq 0$. A slightly tedious computation
 gives
\begin{align}
  \sca{v}{w}^2&=1 - \frac{\nr{W - V}^2}{\nr{W}^2}+\frac{\sca{W - V}{v}^2}{\nr{W }^2} \notag\\
  &= 1 - \frac{\nr{W - V}^2}{\nr{W}^2} \left(1 - \frac{\sca{W - V}{v}^2}{\nr{W-V}^2}\right) \notag \\
  &\leq 1 - \frac{\eps_{nd}^2}{\biLip^2 C_\nabla^2}\left(1 - \sca{\frac{W - V}{\nr{W-V}}}{v}\right), \label{eq:ub1bvw}
\end{align}
  where we have used $\sca{W - V}{V}\geq 0$ with  \eqref{eq:gradlip} and \eqref{eq:gradlength} to get the last  inequality. We will now apply Proposition
  \ref{prop:explicitlowerbound} to the Laguerre cell $L_i$. By
  Lemma~\ref{lemma:changeoflag}, the point $p_i :=
  (\exp_{y_i}^c)^{-1}(\exp_{y_0}^c (p)) \in X_{y_i}$ belong to
  $L_i$ and the vectors $V_i := \nabla f_{y_0,y_i}(p_i)$ and $W_i:=
  \nabla f_{y_j,y_i}(p_i)$ are both normals to $L_i$ at
  $p_i$. Proposition \ref{prop:explicitlowerbound} then shows that the
  vectors $V_i$ and $W_i$ satisfy 
\begin{equation}
  -1 + \delta_0^2 \leq \frac{\sca{V_i}{W_i}} {\nr{V_i}\nr{W_i}}.\label{eq:LBinLagj}
  \end{equation}
We transfer this inequality to the exponential chart of the original
point $y_0$ using the linear map $$A := (\restr{\D 
  \exp_{y_0}^c}_{p})^* \circ [(\restr{\D 
    \exp_{y_i}^c}_{p_i})^*]^{-1}.$$ First, note that $W - V = AW_i$ and
$V = -A V_i.$ Applying the generalized Wiedlandt inequality
(Lemma~\ref{lemma:wiedlandt}) and \eqref{eq:LBinLagj} we have
\begin{align}
  1 - \frac{\sca{W-V}{v}}{\nr{W-V}} =
  1 + \frac{\sca{A W_i}{A V_i}}{\nr{A W_i}{\nr{A V_i}}} 
  &\geq \Ccond^{-4}
  \left(1 + \frac{\sca{V_i}{W_i}}
  {\nr{V_i}\nr{W_i}}\right) \notag \\
  &\geq \Ccond^{-4} \delta_0^2 \geq \delta_1^2.\label{eq:boundangle2}
\end{align}
Combining this inequality with \eqref{eq:ub1bvw} we obtain
\eqref{eq:awayfromcolinear1a} in this case as well. 
\end{proof}

\begin{proof}[Proof of Proposition~\ref{prop:explicitupperbound1}, case II]
  Consider $V := \nabla f_{i}(p)$ and let $W$ be any vector in the
  normal cone $\Normal_p \Xloc$. When $\sca{V}{W} \leq 0$, the
  inequality directly follows from
  Proposition~\ref{prop:explicitlowerbound}, ensuring that normal
  vectors cannot be near-opposite.  We now assume $\sca{V}{W}\geq 0$
  and we will apply Proposition \ref{prop:explicitlowerbound} to the
  Laguerre cell of $y_i$ and transfer the result to the exponential
  chart of the point $y_0$. Let $p_i =
  (\exp_{y_i}^c)^{-1}(\exp_y^c(p))$. Then, by
  Lemma~\ref{lemma:changeoflag}, $p_i$ belongs to $L_i$ and $V_i: =
  \nabla f_{y_0,y_i}(p_i)$ is a normal vector to $L_i$ at $p_i$. We
  define a second normal vector by considering
  $$A := (\restr{\D  \exp_{y_0}^c}_{p_0})^* \circ [(\restr{\D 
      \exp_{y_i}^c}_{p_i})^*]^{-1}$$ and by setting $W_i := A^{-1}W \in \Tang^*_{p_i}\Rsp^d$.  By
  Lemma~\ref{lem:imageofconvexsetnormal}, the vector $W_i$ belongs to
  the normal cone to $X_{y_i}$ at $p_i$. Moreover, since $L_i$ is
  contained in $X_{y_i}$ and both sets contain $p_i$, we have $\Normal_{p_i}
  X_{y_i} \subseteq \Normal_{p_i} L_i$, thus ensuring that $W_i$ also belongs
  to the normal cone to $L_i$ at $p_i$. Then, by
  Proposition~\ref{prop:explicitlowerbound} again,
$$\frac{\sca{V_i}{W_i}}{\nr{V_i}\nr{W_i}} \geq -1 + \delta_0^2. $$ As
  before, we transfer this inequality to the exponential chart of
  the original point $y$ using the linear map $A$.  We have $V =
  \nabla f_i(p) = -A V_i$, and by construction $W = A W_i$. We get the
  desired inequality by applying Lemma~\ref{lemma:wiedlandt}:
\begin{align*}
  1 - \frac{\sca{V}{W}}{\nr{V}\nr{W}} = 1 + \frac{\sca{AV_i}{AW_i}}{\nr{AV_i}\nr{AW_i}} &\geq
  \Ccond^{-4}\left(1+\frac{\sca{V_i}{W_i}}{\nr{V_i}\nr{W_i}}\right) \\
  &\geq \Ccond^{-4}\delta_0^2 \geq \delta_1^2,
\end{align*}
and by recalling that $\sca{V}{W} \geq 0$.
\end{proof}

%------------------------------------------------
\subsection{Proof of Theorem~\ref{th:StrongRegularity}}
\label{subsec:ProofRegularity}
By Theorem~\ref{th:Aurenhammer}, the second-differentiability of
Kantorovich's functional $\Phi$ will follow from the differentiability
of the function
 \begin{align*}
   G_{y_0}(\psi) := \int_{\Lag_{y_0}(\psi)} \rho(x) \dd \Haus^d_g(x)  =
   \int_{L_y(\psi)} \rholoc(p) \dd p 
 \end{align*}
where we have used the change-of-variable formula with $x =
\exp_{y_0}^c(p)$, so that $\rholoc$ is the density of the pushforward
measure $(\exp^c_{y_0})^{-1}_\#(\rho \Haus^d_g)$ with respect to the
Lebesgue measure. We recall that 
\begin{align*}
  K(\bm{\lambda}_\psi) 
  = (\exp_{y_0}^c)^{-1}(\Lag_{y_0}(\psi)),
\end{align*}
so that $G_{y_0}(\psi) = \Gloc(\bm{\lambda}_\psi)$ (as defined in \eqref{eq:G}).   
The differentiability
of $\Gloc$ will be proven using Theorem~\ref{th:LocReg} from the
previous section.
%% We now consider the Laguerre cell of the
%% point $y_0$ in its own exponential chart:
%% \begin{align*}
%%   &(\exp_{y_0}^c)^{-1}(\Lag_{y_0}(\psi))\\
%%   \quad&= \{ p\in \Xloc \mid \forall i\geq 1,~ c(\exp_{y_0}^c(x), y_0) + \psi(y_0) \leq  c(\exp_{y_0}^c(x), y_y) + \psi(y_i)\}\\
%%   \quad&= 
%% \end{align*}
%% where $K(\bm{\lambda})$ has been introduced in
%% Section~\ref{section:localregularity}.
%% The differentiability of $\partial
%% \Phi/\partial 1_{y_0}$ at $\psi$ is therefore equivalent the
%% differentiability of $\Gloc$ at $\bm{\lambda}_\psi$.

Let us fix a function $\psi_0$ in $\mathcal{K}^\eps$ and recall that
$\bm{\lambda}_0 := \bm{\lambda}_{\psi_0}$.  By
Proposition~\ref{prop:Gcont} there exists a positive constant $T_{tr}$
such that every function $\psi$ on $Y$ satisfying $\nr{\psi -
  \psi_0}_{\infty}\leq T_{tr}$ belongs to
$\mathcal{K}^{\eps/2}$. Then, by
Proposition~\ref{prop:explicitupperbound1}, we see that the functions
$f_i$ satisfy the transversality condition \eqref{eq:T} on the cube
$\bm{\lambda_0} + [-T_{tr}, T_{tr}]^N$ with constant
\begin{equation}
  \eps_{tr} = \delta_1 = 
  \frac{\eps_{nd}\delta_0}{2 \biLip C_\nabla\Ccond^2},
  \label{eq:EpsTRConstants}
\end{equation}
where we recall that $\delta_0 =
\eps/(2^d\nr{\rho}_\infty{\biLip}^{2d}\diam{(X)}^{d})$.  Note also
that since $\rho$ is $\alpha$-H\"older and since the exponential map
is $\Class^{1,1}$, the probability density $\rholoc$ is also
$\alpha$-H\"older with constant
\begin{equation}
  \label{eq:Rholoc}
  \nr{\rholoc}_{\Class^{0,\alpha}(\Xloc)} \leq
  C(\nr{\rho}_{\Class^{0,\alpha}},\Cdet).
\end{equation}
We can now apply Theorem \ref{th:LocReg}. This ensures that the
function $\Gloc$ is of class  $\Class^{1, \alpha}$ on the cube
$\bm{\lambda_0} + [-T_{tr}, T_{tr}]^N$, so that
$\partial\Phi/\partial\one_{y_0}$ is $\Class^{1,\alpha}$ on a
neighborhood of $\psi_0$. Since this holds for any point $y_0\in Y$ and
any function $\psi_0$ in $\mathcal{K}^\eps$, we have established the
$\Class^{2,\alpha}$-regularity of $\Phi$ on $\mathcal{K}^\eps$. The
claimed dependency of $\nr{\Phi}_{\Class^{2,
    \alpha}(\mathcal{K}^\eps)}$ follows from equations
\eqref{eq:EpsTRConstants}--\eqref{eq:Rholoc} and from
Theorem~\ref{th:LocReg}.

Our goal is now to deduce the formula for the gradient of $G$ given in
Theorem~\ref{th:StrongRegularity} (Equation~\eqref{eq:Hess}), from the
formula for the gradient of $\Gloc$ given in Theorem~\ref{th:LocReg}
(Equation~\eqref{eq:PD}). This is done by looking more closely at the
change of variable induced by the exponential map $F := \exp^c_{y_0}:
\Omega\to \Rsp^d$. For ease of notation we let $h:= c(\cdot, y_0) - c(\cdot, y_i)\lambda
= f_i\circ F^{-1}$. By definition of the push-forward, we have for any
bounded measurable function $\chi$ on $\Omega$,
$$ \int_{\hat{\Omega}} \chi(F(p)) \rho(p) \dd\Haus^{d}(p) =
\int_{\Omega} \chi(x) \hat{\rho}(x) \dd \Haus^{d}_{g}(x). $$
Multiplying $\chi$ by the characteristic function of $h^{-1}([t,s])$, this gives
$$ \int_{f_i^{-1}([t, s])} \chi(F(p)) \hat{\rho}(p) \dd\Haus^{d}(p) =
\int_{h^{-1}([t,s])} \chi(x) \rho(x) \dd \Haus^{d}_{g}(x).$$
Applying the coarea formula on both sides, we get
\begin{equation} \int_{t}^{s} \int_{f_i^{-1}(r)} \frac{\chi(F(p))
  \hat{\rho}(p)}{\nr{\nabla f_i(p)}} \dd\Haus^{d-1}(p)\dd r =
  \int_{t}^{s} \int_{h^{-1}(r)} \frac{\chi(x) \rho(x)}{\nr{\nabla
      h(x)}_{g}} \dd \Haus^{d-1}_{g}(x) \dd r. \label{eq:coarea}
\end{equation}
Using the
$\Class^{1,1}$ smoothness of the functions $f_i$ and the \eqref{eq:Tw}
condition, we can see that for any $\chi$ in $\Class^0_c(\Omega)$, the
two inner integrals
$$ r \mapsto \int_{f_i^{-1}(r)} \frac{\chi(F(p))
  \hat{\rho}(p)}{\nr{\nabla f_i(p)}} \dd\Haus^{d-1}(p) \hbox{ and }
r\mapsto \int_{h^{-1}(r)} \frac{\chi(x) \rho(x)}{\nr{\nabla
    h(x)}_{g}} \dd \Haus^{d-1}_{g}(x)
$$ depend continuously on $r$. Using the continuity of these functions
in $r$, equation \eqref{eq:coarea} and the Fundamental Theorem of
Calculus, we get that for any function $\chi$ in $\Class^0_c(\Omega)$
and any $r$ in $\Rsp$,
$$\int_{f_i^{-1}(r)} \frac{\chi(F(p)) \hat{\rho}(p)}{\nr{\nabla
    f_i(p)}} \dd\Haus^{d-1}(p) = \int_{h^{-1}(r)} \frac{\chi(x)
  \rho(x)}{\nr{\nabla h(x)}_{g}} \dd \Haus^{d-1}_{g}(x).$$ By
Tietze's extension theorem, \eqref{eq:Tw} and \eqref{eq:Reg}, the level set $S := h^{-1}(r)$ is a  $\Class^{1,1}$ hypersurface of $\Omega$. Thus
every function in $\Class^0_c(S)$ can be extended to a function in
$\Class^0_c(\Omega)$. The previous equality therefore holds for any
$\chi$ in $\Class^0_c(S)$, and by density, it also holds for any
function $\chi$ in $\LL^1(S)$. Applying this with $\chi$ equal to the
indicator function of the interface between the Laguerre cell of $y_0$
and the cell of $y_i$, we get the desired formula for the partial
derivatives:
\begin{align*}
  \frac{\partial G_i}{\partial \psi_i}(\psi) =
  \frac{\partial \Gloc}{\partial \lambda_i}(\bm{\lambda}_\psi) 
  &=
  \int_{L_{y_0}(\psi) \cap f_i^{-1}(\psi(y_i) - \psi(y_0))} 
  \frac{\rholoc(p)}{\nr{\nabla f_i(p)}} \dd \Haus^{d-1}(p) \\
  %% &= \int_{f_i^{-1}(\psi(y_i) - \psi(y_0))}  \chi_{L_{y_0}(\psi)}(p)
  %% \frac{\rholoc(p)}{\nr{\nabla f_i(p)}} \dd \Haus^{d-1}(p) \\
  %% &= \int_{(c_{y_0} - c_{y_i})^{-1}(\psi(y_1) - \psi(y_0))} \chi_{\Lag_{y_0}(\psi)}(x)
  %% \frac{\rho(x)}{\nr{\nabla (x)}} \dd \Haus^{d-1}(x) \\
  &= \int_{\Lag_{y_0}(\psi) \cap \Lag_{y_i}(\psi)} 
  \frac{\rho(x)}{\nr{D c_{y_0}(x) - D c_{y_i}(x)}_{g}} \dd \Haus_g^{d-1}(x).
\end{align*}

\subsection{Alternative upper transversality estimates}\label{subsec:alternative-bound}
\label{subsec:regularitymtw}
Finally, we state an alternate upper transversality estimate, under the assumption that the points in $Y$ are sampled from some \emph{target domain} $\Lambda$, along with some convexity conditions. Specifically, let $\Lambda$ be a bounded, open subset in some Riemannian manifold, with $Y\subset \Lambda$. We then assume that for any $x'\in \Omega^{\cl}$, %$(x_0, y_0)\in \Omega^{\cl}\times (\Lambda)^{\cl}$, 
the mapping
\begin{align*}
% x&\mapsto -D_yc(x, y_0),\\
 y&\mapsto -D_xc(x', y)
\end{align*}
is a diffeomorphism onto its range, and we denote the inverse by $\exp^c_{x'}$. We will also assume that 
%
% existence of a family of sets $\Lambda_x\subset \Rsp^d$ and diffeomorphisms $\exp_x^c: \Lambda_x\to \Lambda$ indexed by $x\in \Omega$, for which 
% 
 $\coord{\Lambda}{x}$ is convex for all $x\in \Omega$, and finally that the following inequality holds: for any $x$, $x'\in \Omega$, $q_0$, $q_1\in \coord{\Lambda}{x'}$, and $t\in [0, 1]$,
\begin{align}
&-c(x, \exp_{x'}^c((1-t)q_0+tq_1))+c(x',  \exp_{x'}^c((1-t)q_0+tq_1)))\notag\\
&\leq \max\{-c(x, \exp_{x'}^c(q_0))+c(x',  \exp_{x'}^c(q_0)), -c(x, \exp_{x'}^c(q_1))+c(x',  \exp_{x'}^c(q_1))\}\label{eq:targetloeper}
\end{align}
Note that this last inequality is nothing but quasi-concavity of $c(x',\cdot) - c(x,\cdot)$ in the global coordinate chart of $\Lambda$ defined by $\exp_{x'}^c$.
For more on these conditions, see Remark \ref{rem:MTWremark} below.

Proposition \ref{prop:explicitupperbound2} can be applied to provide an alternative bound in the transversality condition \eqref{eq:T} when the point $p_0\in \partial K(\bm{\lambda})$ is in the interior of $X$ (so in particular, when dealing with Laguerre cells that do not intersect $\partial X$). The advantage of this bound is that it does not require knowledge of the condition number $\Ccond$.

Recall that we have fixed some point $y_0\in Y=\{y_1,\ldots, y_N\}$ and for any index $i\in \{1,\ldots, N\}$ we use the notation,
\begin{align*}
 f_i(p)=f_{y_i, y_0}(p)=c(\exp^c_{y_0}{p}, y_0)-c(\exp^c_{y_0}{p}, y_i).
\end{align*}
We also re-define the constants $C_\nabla$ and $\biLip$ so that in their definitions, the maximum of $y$ ranges over the domain $\Lambda$ instead of just $Y$.
%\begin{align*}
% L_2:=\sup\nr{-c(x', \exp^c_{x''}([(\D  \exp^c_k\vert_{p''})^*]^{-1}(\cdot))}_{Lip([(\D  \exp^c_{y_k}\vert_{p''})^*]\coord{\Lambda}{x''})},
%\end{align*}
%where the supremum is taken over $1\leq k\leq N$, and $x'$, $x''\in \Omega$, and $p'':=(\exp^c_{y_k})^{-1}(p'')$.
\begin{proposition}\label{prop:explicitupperbound2}
Suppose $\nr{\bm{\lambda}}<T_{tr}<\frac{\eps}{8\biLip^{2d-2}\nr{\rho}_\infty\Haus^{d-1}_g(\partial X)}$, and $p_0\in K(\bm{\lambda})$ with $f_i(p_0) = \lambda_i$ and $f_j(p_0) =
    \lambda_j$ for  $i\neq j$ in $\{1,\hdots,N\}$. Then we have
    \begin{equation}\label{eq:awayfromcolinear2a}
      \left(\frac{\sca{\nabla f_i(p_0)}{\nabla f_j(p_0)}}
           {\nr{\nabla f_i(p_0)}\nr{\nabla f_j(p_0)}}\right)^2\leq 1- \delta_2^2
    \end{equation}
    where
 \begin{align*}
\delta_2:=\frac{\eps\eps_{nd}}{4\sqrt{2}C_\nabla^2\biLip^{2d+4}\nr{\rho}_\infty\left(\Haus^{d-1}_g(\partial X)\right)}.
\end{align*}

\end{proposition}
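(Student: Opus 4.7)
The strategy is to argue by contradiction, exploiting the \emph{target} $c$-exponential chart $\exp^c_{x_0}$ at $x_0 := \exp^c_{y_0}(p_0) \in \inte X$. Proposition~\ref{prop:explicitupperbound1} went through a \emph{source} chart $X_{y_i}$ and required Lemma~\ref{lemma:wiedlandt} to transfer normal-cone information between charts, which is what made the constant $\Ccond$ appear. The alternative proof should sidestep this step by working entirely in the target chart and invoking the target quasi-convexity condition~\eqref{eq:targetloeper} in place of (the source-side) Proposition~\ref{prop:explicitlowerbound} whenever possible. The fact that $p_0$ lies in the interior of $\Xloc$ is essential here, since the exponential $\exp^c_{x_0}$ is only defined on $X\subseteq \inte \Omega$.

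My plan is as follows. Suppose by contradiction that $(\sca{\nabla f_i(p_0)}{\nabla f_j(p_0)}/(\nr{\nabla f_i(p_0)}\nr{\nabla f_j(p_0)}))^2 > 1-\delta_2^2$. By Proposition~\ref{prop:explicitlowerbound} applied to $L_{y_0}(\psi)$ at $p_0$, the two gradients cannot be near-opposite, so they must be nearly parallel with positive inner product. Next, set $q_k := \coord{y_k}{x_0}$ for $k\in\{0,i,j\}$; the convexity of $\coord{\Lambda}{x_0}$ yields a segment $\gamma(t) := (1-t)q_i + tq_j$ contained in $\coord{\Lambda}{x_0}$, and the curve $y(t) := \exp^c_{x_0}(\gamma(t))\in \Lambda$ interpolates between $y_i$ and $y_j$. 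By~\eqref{eq:targetloeper}, for every $x \in X$ the function $t \mapsto c(x_0, y(t)) - c(x, y(t))$ is quasi-convex on $[0,1]$. Combined with the equalities $c(x_0,y_k) + \psi(y_k) = c(x_0,y_0) + \psi(y_0)$ for $k\in \{i,j\}$, which hold because $f_i(p_0) = \lambda_i$ and $f_j(p_0) = \lambda_j$, this target-side quasi-convexity will let us compare the ``virtual'' Laguerre cell corresponding to $y(t)$ with those of $y_i$ and $y_j$, and ultimately show that one of $\Lag_{y_i}(\psi)$ or $\Lag_{y_j}(\psi)$ is contained (up to a set of mass less than $\eps/4$) in a tubular neighborhood of width $\lesssim T_{tr}$ of the interface $\partial \Lag_{y_0}(\psi) \cap \partial \Lag_{y_i}(\psi)$. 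The volume of that neighborhood is bounded by $T_{tr}\cdot \biLip^{2d-2}\cdot \nr{\rho}_\infty \cdot \Haus^{d-1}_g(\partial X)$ thanks to the monotonicity of the $(d-1)$-Hausdorff measure of a convex boundary under inclusion (see~\cite[p.211]{schneider}), which by the chosen bound on $T_{tr}$ is strictly less than $\eps/2$, contradicting $\psi \in \mathcal{K}^{\eps/2}$.

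The main obstacle is the geometric step just outlined: converting the infinitesimal near-parallelism of $\nabla f_i(p_0)$ and $\nabla f_j(p_0)$ at the single point $p_0$ into the global statement that the two neighboring Laguerre cells sit inside a common thin tube. The natural identity $\nabla f_i(p_0) = (\D \exp^c_{y_0}|_{p_0})^*(q_i - q_0)$ must be avoided in a quantitative way, because pushing angles through that adjoint would reintroduce $\Ccond$; instead, one should work intrinsically on $\Omega$ with the cost differences $c(\cdot,y_0) - c(\cdot,y_k)$, using~\eqref{eq:targetloeper} to propagate the near-tangency along the target segment $\gamma(t)$ and then convert this into an $\Haus^d$ estimate via a coarea-type decomposition analogous to the one used in the proof of Proposition~\ref{prop:PDUC}. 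The constants $\eps_{nd}$ and $C_\nabla$ in $\delta_2$ enter to normalize the cost gradients, while $\biLip^{2d+4}$ arises from the bi-Lipschitz conversions between Riemannian and Euclidean surface and volume measures needed to match the $\Haus^{d-1}_g(\partial X)$ factor coming from the tube estimate.
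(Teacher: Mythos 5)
Your high‑level plan correctly identifies the two driving inputs — the target‑side Loeper inequality \eqref{eq:targetloeper} and a lower mass bound on the Laguerre cells — and the arithmetic of your final contradiction step does close when the numbers are chosen as in the statement. However, there is a genuine gap at precisely the step you flag as "the main obstacle," and your description of how it should be resolved differs from what actually works and, as sketched, would lead to an argument that proves too much.

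The crucial missing link is the quantitative connection between the angle $\sca{v}{w}$ and the thinness of the relevant region. You claim one of $\Lag_{y_i}(\psi)$, $\Lag_{y_j}(\psi)$ lies (up to mass $\eps/4$) in a tube of width $\lesssim T_{tr}$ around the interface, with volume $\leq T_{tr}\cdot\biLip^{2d-2}\nr{\rho}_\infty\Haus^{d-1}_g(\partial X)$, and derive a contradiction from the bound on $T_{tr}$. But this bound does not involve $\sca{v}{w}$, so the contradiction you reach would be independent of the near‑parallelism assumption and would absurdly forbid triple points whenever $T_{tr}$ is small enough. The tube width must contain a term proportional to $\sqrt{1-\sca{v}{w}}$; it is precisely this angle‑dependent term, balanced against the $T_{tr}$‑independent "depth" lower bound coming from the $\eps/2$ mass estimate, that delivers a bound on the angle rather than a uniform impossibility. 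The paper obtains this angle‑dependent width by producing a "rotated" target point $y_i'$: write $q_0 := -D_x c(x_0,y_0)$, $q_1 := -D_xc(x_0,y_j)$, and take $q'$ on the segment $[q_0,q_1]$ at parameter $\nr{V}/\nr{W}$, so that after transporting back to the $y_0$‑chart via $\D\exp^c_{y_0}\vert_{p_0}^*$, the gradient of the surrogate function $f_{i'}$ equals $\nr{V}\,w$. Then \eqref{eq:targetloeper} applied along $[q_0,q_1]$ gives $f_{i'}-\lambda_i\leq\max\{0,f_j-\lambda_j\}$ globally, while $\abs{f_{i'}-f_i}\leq \biLip^2C_\nabla\cdot\biLip C_\nabla\cdot\nr{V-\nr{V}w}$ and $\nr{V-\nr{V}w}=\nr{V}\sqrt{2(1-\sca{v}{w})}$; this is where the angle enters. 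Your segment $\gamma(t)=(1-t)q_i+tq_j$ between $q_i$ and $q_j$ does not have this normalization built in, and I do not see how it yields the necessary inequality directly.

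Two smaller points. First, your stated motivation — that the identity $\nabla f_i(p_0)=(\D\exp^c_{y_0}\vert_{p_0})^*(q_i-q_0)$ must be avoided because pushing angles through the adjoint reintroduces $\Ccond$ — is based on a misconception: the paper does use that adjoint, but it only pushes a \emph{length} through it (namely $\nr{V-\nr{V}w}$), for which the bi‑Lipschitz constant $\biLip$ suffices; $\Ccond$ only appears when transporting \emph{angles} (via Wiedlandt's inequality, Lemma~\ref{lemma:wiedlandt}), and that step simply does not occur here. Second, your plan of showing a cell sits inside a thin tube is the contrapositive of what the paper actually establishes: the paper produces a single point $p_c$ deep inside $\Lag_{y_i}$ (with $f_i(p_c)-\max\{0,f_j(p_c)\}$ bounded below in terms of $\eps,\eps_{nd}$, $\biLip$, $\Haus^{d-1}_g(\partial X)$, via an explicit coarea slab estimate in the $y_i$‑chart), then reads off the angle bound by combining this with the global inequality above. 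This is cleaner than the tube formulation and avoids the imprecision around which set of mass $\eps/4$ can be discarded.
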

%The idea behind this upper bound is the following. First suppose $\nabla f_i(p_0)$ is shorter than $\nabla f_j(p_0)$ and the two vectors are collinear, and that $\bm{\lambda}=\bm{0}$. Inspired by calculations from \cite[Remark 2.5, Proof of Lemma 4.7]{guillenkitagawa2015}, this would allow us to use \eqref{eq:targetloeper} to obtain the inequality  $f_{i}\leq \max\{0, f_j\}$ everywhere, which in turn would mean that $-c(\cdot, y_i)+\psi(y_i)$ is less than one of $-c(\cdot, y_0)+\psi(y_0)$ or $-c(\cdot, y_j)+\psi(y_j)$ everywhere. However, combined with \eqref{eq:Tw}, this would imply that the Laguerre cell $\Lag_{y_i}(\psi)$ has measure zero, which is a contradiction. In the general case, we rotate $\nabla f_i(p_0)$ so that it is collinear with $\nabla f_j(p_0)$, and use the lower bound on $\Lag_{y_i}(\psi)$ to obtain a quantitative version of the above argument.

%\section{Alternate proof of upper transversality}\label{sec:altproof}
\begin{remark} \label{rmk:upperestremark}
Before embarking on the proof of this ``continuous'' upper transversality estimate, we compare some key features of its proof with that of the ``discrete'' upper transversality estimate Proposition \ref{prop:explicitupperbound1}. By considering the case when the two vectors $\nabla f_i(p_0)$ and $\nabla f_j(p_0)$ are collinear, we can see that both proofs rely on the same core idea. In this case, $\nabla f_i(p_0)$ and $\nabla f_j(p_0)$ are outward normal vectors (in coordinates induced by $\exp^{c}_{y_0}$, see Remark \ref{rem:MTWremark}) to the sublevel sets $\{-c(\cdot, y_0)+\psi(y_0)\leq -c(\cdot, y_i)+\psi(y_i)\}$ and $\{-c(\cdot, y_0)+\psi(y_0)\leq -c(\cdot, y_j)+\psi(y_j)\}$ respectively, at $p_0$ which lies on the intersection of their boundaries. Since these sets are convex in the associated coordinates, this will cause the Laguerre cell associated to $y_i$ to be trapped in a lower dimensional set, giving it zero mass which is a contradiction. The difference between the two proofs lies in quantifying this estimate. In the discrete version of the estimate we do one of two things depending on the sign of the inner product $\sca{W - V}{V}$ (see proof of Proposition \ref{prop:explicitupperbound1}). When this inner product is negative, using that $\nr{W}\geq \nr{V}$ and the lower bound on $\nr{W-V}$ from the non-degeneracy condition \eqref{eq:ND} yields that $W$ must lie outside of a cone of a certain size opening with axis along $V$. In the other case we note $W-V$ and $-V$ are, respectively, outward normal vectors to the sublevel sets $\{-c(\cdot, y_i)+\psi(y_i)\leq -c(\cdot, y_j)+\psi(y_j)\}$ and $\{-c(\cdot, y_i)+\psi(y_i)\leq -c(\cdot, y_0)+\psi(y_0)\}$, viewed in coordinates given by $\exp_{y_i}^{c}$. Thus the lower transversality estimate Proposition \ref{prop:explicitlowerbound} can be applied to obtain a quantitative bound, but at the price of involving the condition number since we have made a change of coordinates. In the continuous version, there is no change of coordinates, instead we make a rotation to align $\nabla f_j(p_0)$ and $\nabla f_i(p_0)$, then estimate the error induced by this rotation using \eqref{eq:targetloeper}, in a vein similar to calculations from \cite[Remark 2.5, Proof of Lemma 4.7]{guillenkitagawa2015}.
\end{remark}
\begin{proof}[Proof of Proposition \ref{prop:explicitupperbound2}]
Let us again write 
 \begin{align*}
   V :&= \nabla f_i(p_0),\quad% = \nabla f_{y_i,y_0}(p),\\
    W :=\nabla f_j(p_0),\\% = \nabla f_{y_j,y_0}(p),\\
     v :&= \frac{V}{\nr{V}},\quad w :=\frac{W}{\nr{W}},
     \end{align*}
and assume $\eps_{nd}<\nr{V}\leq \nr{W}$ and $\sca{v}{w}>0$. Let us also define
\begin{align*}
x_0:&=\exp^{c}_{y_0}(p_0),\quad
q_0:=-D_xc(x_0, y_0),\quad q_1:=-D_xc(x_0, y_j).
\end{align*}
A quick calculation yields
\begin{align*}
 q_0&=[(\D  \exp^{c}_{y_0}\vert_{p_0})^*]^{-1}(-\nabla_p c(\exp^{c}_{y_0}(p), y_0)\vert_{p=p_0}),\\
 q_1&=[(\D  \exp^{c}_{y_0}\vert_{p_0})^*]^{-1}(W)+q_0,
\end{align*}
Now we define the point
\begin{align*}
 q':=[(\D  \exp^{c}_{y_0}\vert_{p_0})^*]^{-1}(\nr{V}w)+q_0,%\frac{\nr{\nabla f_i(p_0)}}{\nr{\nabla f_j(p_0)}}\nabla f_j(p_0))+q_0,
\end{align*}
since $\nr{V}\leq \nr{W}$, the above calculation yields that $q'$ lies on the line segment between $q_0$ and $q_1$; since $\coord{\Lambda}{x_0}$ is convex we have that $q'\in \coord{\Lambda}{x_0}$ as well.

%
%
%By using the underlying Riemannian metric, we may identify the tangent and cotangent spaces
%\begin{align*}
% T^{\ast}_{p_0}T^{\ast}_{y_0}\Lambda\cong T^{\ast}_{y_0}\Lambda\cong T_{y_0}\Lambda
%\end{align*}
%and calculate that (see discussion in \cite[Remark 2.5, Proof of Lemma 4.7]{guillenkitagawa2015}) 
%\begin{align*}
%%[(\D  \exp^c_{y_0}\vert_{p_0})^*]^{-1}
%%[-D_yD_xc(x_0, y_0)]
% y_j&=\exp^c_{x_0}(-D_xc(x_0, y_j))\\
% &=\exp^c_{x_0}([(\D  \exp^c_{y_0}\vert_{p_0})^*]^{-1}(\nabla f_j(p_0)-\nabla_p\tilde{c}(p_0, y_0))),\\
% % y_i&=\exp^c_{x_0}([-D_yD_xc(x_0, y_0)](\nabla f_i(p_0)-\nabla_p\tilde{c}(p_0, y_0))),\\
%  y_0&=\exp^c_{x_0}(-D_xc(x_0, y_0))\\
%  &=\exp^c_{x_0}([(\D  \exp^c_{y_0}\vert_{p_0})^*]^{-1}(-\nabla_p\tilde{c}(p_0, y_0))).
%\end{align*}
Thus we can define
\begin{align*}
 y'_i:&=\exp^c_{x_0}(q'),\\
  \tilde{f}_i(p):&=-c(\exp^{c}_{y_0}(p), y'_i)+c(\exp^{c}_{y_0}(p), y_0)+c(x_0, y'_i)-c(x_0, y_0)+\lambda_i,
\end{align*}
%we note that $y'_i$ is well-defined: since $\nr{\nabla f_i(p_0)}\leq \nr{\nabla f_j(p_0)}$, we see that $[(\D  \exp^c_{y_0}\vert_{p_0})^*]^{-1}(\frac{\nr{\nabla f_i(p_0)}}{\nr{\nabla f_j(p_0)}}\nabla f_j(p_0)-\nabla_p\tilde{c}(p_0, y_0))$ lies on the line segment connecting the points 
%\begin{align*}
%-D_xc(x_0, y_j)=[(\D  \exp^c_{y_0}\vert_{p_0})^*]^{-1}(\nabla f_j(p_0)-\nabla_p\tilde{c}(p_0, y_0))
%\end{align*}
% and 
% \begin{align*}
% -D_xc(x_0, y_0)=[(\D  \exp^c_{y_0}\vert_{p_0})^*]^{-1}(-\nabla_p\tilde{c}(p_0, y_0))
% \end{align*}
%  which are both contained in $\coord{\Lambda}{x_0}$, which in turn is assumed to be convex.
and by \eqref{eq:targetloeper} applied with the choices $x:=\exp^{c}_{y_0}(p)$, $x':=x_0$, $t:=\frac{\nr{V}}{ \nr{W}}$, and $q_0$, $q_1$ as defined above.
%\begin{align*}
% -\tilde{c}(p, y'_i)+\tilde{c}(p_0, y'_i)\leq \max\{-\tilde{c}(p, y_0)+\tilde{c}(p_0, y_0),\ -\tilde{c}(p, y_j)+\tilde{c}(p_0, y_j)\},
% \end{align*}
% thus 
 %(since $\tilde{f}_i(p_0)-\lambda_i=f_j(p_0)-\lambda_j=0$ by construction), 
 we will obtain for all $p\in \coord{\Omega}{y_0}$,
 \begin{align}\label{eq:DASM}
 \tilde{f}_i(p)-\lambda_i\leq \max\{0, f_j(p)-\lambda_j\},
\end{align}
while another quick calculation yields
\begin{align*}
 y_i&=\exp^c_{x_0}([(\restr{\D  \exp^{c}_{y_0}}_{p_0})^*]^{-1}(V)+q_0).
\end{align*}
Now note that 
\begin{align*}
& \lvert -c(\exp^{c}_{y_0}(p), y'_i)+c(\exp^c_{y_0}(p), y_i)\rvert\\
&\leq \sup_{(x, q)\in \Omega\times \coord{\Lambda}{x_0}}\nr{(\restr{\D \exp^c_{x_0}}_{q})^* (-D_yc(x, \exp^c_{x_0}(q)))}\nr{[(\restr{\D  \exp^{c}_{y_0}}_{p_0})^*]^{-1}(\nr{V}w-V)}\\
&\leq C_\nabla \biLip^2\nr{\nr{V}w-V},
\end{align*}
where we have used that if $y=\exp^c_{x_0}(q)$, then $(\restr{\D \exp^c_{x_0}}_{q})^*=\restr{\D \exp^{c}_{y}}_{-D_x c(x_0, y)}$. As a result we obtain
\begin{align*}
 &\lvert \tilde{f}_i(p)-f_i(p)\rvert^2=\lvert -c(\exp^{c}_{y_0}(p), y'_i)+c(\exp^{c}_{y_0}(p), y_i)\rvert^2\\
 &\leq (C_\nabla \biLip^2)^2\nr{V-\nr{V}w}^2
 %&\leq L_2^2\nr{\nabla f_i(p_0)-\frac{\nr{\nabla f_i(p_0)}}{\nr{\nabla f_j(p_0)}}\nabla f_j(p_0)}^2\\
 %&=2L_2^2\nr{\nabla f_i(p_0)}^2(1-\sca{v_i}{v_j})\\
 =2(C_\nabla \biLip^2)^2\nr{V}^2(1-\sca{v}{w})\\
 &\leq 2(C_\nabla \biLip^2)^2(\biLip C_\nabla)^2(1-\sca{v}{w}).
\end{align*}
Combining with \eqref{eq:DASM} we then have for any $p\in \coord{\Omega}{y_0}$,
\begin{align*}
%\max\{0, f_j(p_c)\}+\eps_{nd}r_0&\leq \tilde{f}_i(p_c)+\sqrt{2}L_1L_2\sqrt{1-\sca{v_i}{v_j}}\\
%&\leq 
 f_{i}(p)-\lambda_i\leq \max\{0,\ f_j(p)-\lambda_j\}+\sqrt{2} \biLip^3 C_\nabla^2 \sqrt{1-\sca{v}{w}}
%\\
%&\leq T_{tr}+\max\{0,\ f_j(p_c)\}+\sqrt{2}L_1L_2\sqrt{1-\sca{v_i}{v_j}},
\end{align*}
or re-arranging and using that $\nr{\bm{\lambda}}<T_{tr}$,
\begin{align}
 1-\sca{v}{w} &\geq \sup_{p\in \coord{\Omega}{y_0}}\frac{(f_i(p)-\max\{0, f_j(p)\}-2T_{tr})^2}{2(\biLip^3 C_\nabla^2)^2}.\label{eq:Loeperanglebound1}
\end{align}

We now make the following observation. Let us write $X_i:=\coord{X}{y_i}$. Then for any $t$, $s>0$, we can estimate the volume of $X_i\cap\{f_{y_0, y_i}\leq -t\}\cap \{f_{y_j, y_i}\leq -s\}$ by
\begin{align*}
& \Haus^d\left(X_i\cap\{f_{y_0, y_i}\leq -t\}\cap \{f_{y_j, y_i}\leq -s\}\right)\\
 &\geq \Haus^d\left(X_i\cap\{f_{y_0, y_i}\leq 0\}\cap \{f_{y_j, y_i}\leq 0\}\right)-\Haus^d\left(X_i\cap\{-t< f_{y_0, y_i}\leq 0\}\right)\\
 &\quad -\Haus^d\left(X_i\cap \{-s< f_{y_j, y_i}\leq 0\}\right).
\end{align*}
Using that $L_i\subset \{f_{y_0, y_i}\leq 0\}\cap \{f_{y_j, y_i}\leq 0\}$, we can bound the first term from below as
\begin{align*}
 \Haus^d\left(X_i\cap\{f_{y_0, y_i}\leq 0\}\cap \{f_{y_j, y_i}\leq 0\}\right)\geq \frac{\eps}{\biLip^d\nr{\rho}_\infty}.
\end{align*}
For the second term, by the coarea formula, we can write
\begin{align*}
 \Haus^d\left(X_i\cap\{-t< f_{y_0, y_i}\leq 0\}\right)&\leq \int_{-t}^0 \int_{X_i\cap \{f_{y_0, y_i}=z\}}\frac{1}{\nr{\nabla f_{y_0, y_i}(p)}}d\Haus^{d-1}(p)dz\\
 &\leq \frac{t \Haus^{d-1}(\partial X_i)}{\eps_{nd}}\leq \frac{t\biLip^{d-1}\Haus^{d-1}_g(\partial X)}{\eps_{nd}}, 
\end{align*}
where to obtain the second line we have again used the fact that for every $z\in \Rsp$, the set $X_i\cap \{f_{y_0, y_i}=z\}$ is contained in the boundary of a convex subset of $X_i$ in conjunction with \cite[Remark 5.2]{kitagawa2014iterative}. By a similar bound on the third term, we see that as long as
\begin{align*}
 \max\{t,\ s\}<\frac{\eps\eps_{nd}}{2\biLip^{2d-2}\nr{\rho}_\infty\Haus^{d-1}_g(\partial X)}
 \end{align*}
 we have
 \begin{align*}
 \Haus^d\left(X_i\cap\{f_{y_0, y_i}\leq -t\}\cap \{f_{y_j, y_i}\leq -s\}\right)>0,
\end{align*}
thus in particular, (by continuity of $f_{y_0, y_i}$ and $f_{y_j, y_i}$) there must exist a point $p'_c\in X_i$ for which $\max\{f_{y_0, y_i}(p'_c),\ f_{y_j, y_i}(p'_c)\}\leq -\frac{\eps\eps_{nd}}{2\biLip^{2d-2}\nr{\rho}_\infty\Haus^{d-1}_g(\partial X)}$. Translating this back into coordinates in $\coord{X}{0}$ and in terms of $f_i$, $f_j$, we see there exists a point $p_c\in \coord{X}{0}$ for which 
\begin{align*}
f_i(p_c)- \max\{0,\ f_j(p_c)\}\geq \frac{\eps\eps_{nd}}{2\biLip^{2d-2}\nr{\rho}_\infty\Haus^{d-1}_g(\partial X)}.
\end{align*}
 Thus if we have $T_{tr}\leq \frac{\eps\eps_{nd}}{8\biLip^{2d-2}\nr{\rho}_\infty\Haus^{d-1}_g(\partial X)}$, combining with \eqref{eq:Loeperanglebound1} we will obtain the bound \eqref{eq:awayfromcolinear2a} as desired.
\end{proof}

\begin{remark}\label{rem:MTWremark}
Under a set of standard conditions, we can obtain both \eqref{eq:MTW} and \eqref{eq:targetloeper}.

Let $\Omega$ and $\Lambda$ be bounded and smooth domains in $d$ dimensional Riemannian manifolds and take a cost $c\in C^4(\overline{\Omega} \times \overline{\Lambda})$. Also assume
\begin{itemize}
\item $c$ satisfies the \eqref{eq:Tw} condition: for every $x\in \Omega$, the map $y\in \Lambda\mapsto -D_x c(x,y)$ is a diffeomorphism onto its image $\Lambda_x:=-D_x c (x, \Lambda)\subset T^*_x\Omega$ and we define the $c$-exponential map $\exp^c_x: \Lambda_x \to \Lambda$  by $\exp^c_x = (-D_x c(x, \cdot))^{-1}$. 
\item the cost $c^*(x,y):=c(y,x)$ satisfies the \eqref{eq:Tw} condition: for every $y\in \Lambda$, we can define the $c^*$-exponential map $\exp^{c}_y: \Omega_y \to \Omega$ on the set $\Omega_y:=-D_y c (\Omega, y)\subset T^*_y\Lambda$ by $\exp^{c}_y = (-D_y c(\cdot, y))^{-1}$. 
\item $(\exp^c_x)^{-1}(\Lambda)$ is convex for each $x\in \Omega$.
\item $(\exp^{c^*}_y)^{-1}(\Omega)$ is convex for each $y\in \Lambda$.
\item $\det D^2_{xy} c(x, y) \neq 0$ for all $(x,y)\in \overline{\Omega} \times \overline{\Lambda}$.
\item For any $(x, y)\in \overline{\Omega}\times \overline{\Lambda}$ and $\eta\in T^*_x\Omega$, $V\in T_x\Omega$ with $\eta(V)=0$, 
\begin{align}\label{A3w}\tag{A3w}
-(c_{ij, pq}-c_{ij, r}c^{r, s}c_{s, pq})c^{p, k}c^{q, l}V^iV^j\eta_k\eta_l\geq 0,
\end{align}
\end{itemize}
here indices before a comma are derivatives on $\Omega$ and after a comma on $\Lambda$, for fixed coordinate systems, and a pair of raised indices denotes the inverse of a matrix. This last condition \eqref{A3w} originates (in a stronger version) in \cite{ma2005regularity} related to regularity of optimal transport. \cite[Theorem 3.2]{loeper2009regularity} in the Euclidean case and \cite[Theorem 4.10]{kimmccann10} in the general manifold case show the above conditions imply \eqref{eq:MTW} and \eqref{eq:targetloeper}. In fact, they are equivalent as seen in \cite{loeper2009regularity}. This geometric interpretation is a key ingredient in showing regularity in the optimal transport problem in the vein of Caffarelli's classical work \cite{caffarelli92}, see \cite{figallikimmccann13, guillenkitagawa2015}.
%
%Under these assumptions, (the dual of) Theorem 3.2 of \cite{loeper2009regularity} states that the cost $c:\Omega \times \Lambda \to \Rsp$ satisfies the classical \eqref{eq:MTW} of \cite{ma2005regularity} if and only if for every $y\in \Lambda$ the function  
%$$
% v \in X_y \mapsto c(\exp^c_y v, y) - c(\exp^c_y v,z) 
%$$
%are quasi-convex for all $z$ in $\Lambda$. Now, if we take $X:=\Omega$, $Y\subset \Lambda$ a finite subset and assume that the $C^2$-norm of $c$ is bounded, then one recovers the \eqref{eq:MTW} condition of Definition \ref{def:Loeper}.
%
%The above conditions are known to hold, for example, in Riemannian manifolds under conditions (A0) through (A3w) in \cite{kimmccann10} (see Theorem 4.10 there). In this setting, the sets $\Lambda_x$ are subsets of $T^*_x\Omega$, which are then identified with $\Rsp^d$. The result in Euclidean spaces is originally due to Loeper (\cite[Theorem 3.2]{loeper2009regularity}), see also \cite[Section 2]{guillenkitagawa2015}.
\end{remark}

\section{Strong concavity of Kantorovich's functional}
\label{sec:Concavity}
We establish in this section the strong concavity of Kantorovich's
functional $\Phi$ over some suitable domain of $\Rsp^Y$. As explained
in the introduction, $\Phi$ is invariant under addition of a constant,
so that we must restrict ourselves to the orthogonal complement $E_Y$
of the space of constant functions. Moreover, we will consider the set
$\K^\eps$ defined by \eqref{eq:Keps}, which can be thought of as the
space of strictly $c$-concave functions. Recall that the conditions \eqref{eq:Reg}, \eqref{eq:Tw}, \eqref{eq:MTW}, and \eqref{eq:L1-poincare} are defined in the introduction, on pages \pageref{eq:Reg}, \pageref{eq:Tw}, \pageref{eq:MTW}, and \pageref{eq:L1-poincare} respectively.

%----
\begin{theorem}\label{th:UniformConcavity}
  Assume \eqref{eq:Reg}, \eqref{eq:Tw}, \eqref{eq:MTW}. Let $X$ be a
  compact, $c$-convex subset of $\Omega$, and $\rho$ be a continuous
  probability density on $X$ satisfying \eqref{eq:L1-poincare}. Then,
  there exists a positive constant $C$, such that
$$
\forall \psi \in \K^\eps,~\forall v\in E_Y,~~ \sca{\D^2\Phi(\psi) v}{v} \leq -C \cdot\varepsilon^3 \nr{v}^2,
$$ 
where $C$ is defined in \eqref{eq:StrongConcavityConstant}, and depends on $\nr{\rho}_{\infty}$, $\Haus^{d-1}_g(\partial X)$, and $\biLip$, $C_\nabla$, and $\eps_{tw}$ from Remark~\ref{rem:Constants}.
\end{theorem}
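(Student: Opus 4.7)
The plan is to derive uniform concavity of $\Phi$ from the $\LL^1$ Poincaré--Wirtinger inequality \eqref{eq:L1-poincare} applied to a step function encoding the test vector $v\in E_Y$, and then to bridge the resulting $\mathrm{BV}$-type bound to the quadratic form of the Hessian via Cauchy--Schwarz. Write $N:=\abs{Y}$.

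First I would rewrite the Hessian quadratic form. Combining formula \eqref{eq:Hess} with the diagonal identity and symmetrizing in $(y,z)$ yields, for any $v\in\Rsp^Y$,
\begin{equation*}
Q(v):=-\sca{\D^2\Phi(\psi)v}{v} \;=\; \sum_{y<z} (v_y-v_z)^2 \int_{\Lag_y(\psi)\cap\Lag_z(\psi)} \frac{\rho(x)}{\nr{\D c_y(x)-\D c_z(x)}_g}\,\dd\Haus_g^{d-1}(x).
\end{equation*}
Using $\nr{\D c_y-\D c_z}_g\leq 2C_\nabla$ I would lower bound $Q(v)\geq \frac{1}{2C_\nabla}\sum_{y<z}(v_y-v_z)^2\mu_{yz}$, where $\mu_{yz}:=\int_{\Lag_y\cap\Lag_z}\rho\,\dd\Haus_g^{d-1}$.

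Next I associate to $v\in E_Y$ the step function $f_\psi:=\sum_{y\in Y} v_y \mathbf{1}_{\Lag_y(\psi)}$, which is $\mathrm{BV}$ on $X$, and apply a $\mathrm{BV}$ version of \eqref{eq:L1-poincare} obtained from the $\Class^1$ version by standard mollification. Writing $\bar f:=\Exp_\rho(f_\psi)$, Poincaré--Wirtinger reads
\begin{equation*}
\sum_y |v_y-\bar f|\,\rho(\Lag_y(\psi))\;\leq\;\Cpw\sum_{y<z} |v_y-v_z|\,\mu_{yz}.
\end{equation*}
The lower bound on the left-hand side is where $\psi\in\K^\eps$ and $v\in E_Y$ both enter: using $\rho(\Lag_y)\geq\eps$, the elementary inequality $\sum_y|a_y|\geq \nr{a}$, and the identity $\nr{v-\bar f\mathbf{1}}^2=\nr{v}^2+N\bar f^2\geq\nr{v}^2$ (which exploits $\sum_y v_y=0$), I obtain $\eps\nr{v}\leq \Cpw\sum_{y<z}|v_y-v_z|\mu_{yz}$. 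Squaring and applying Cauchy--Schwarz on the right-hand side yields $\eps^2\nr{v}^2 \leq 2C_\nabla\Cpw^2\,Q(v)\cdot M$ with $M:=\sum_{y<z}\mu_{yz}$.

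The hard part will be controlling $M$ by a quantity of size $\mathcal{O}(1/\eps)$, which is precisely what upgrades the quadratic estimate into the cubic power of $\eps$ appearing in the statement. The key observation is that in the exponential chart at $y$, the set $\Lag_y(\psi)\cap X$ corresponds to $L_y(\psi)\cap X_y$, the intersection of two convex sets (by \eqref{eq:MTW} for $L_y(\psi)$ and $c$-convexity of $X$ for $X_y$), hence itself convex. Monotonicity of the perimeter under inclusion for nested convex bodies, combined with the bi-Lipschitz control $\biLip$ on $\exp_y^c$, gives
\begin{equation*}
\Haus_g^{d-1}(\partial(\Lag_y(\psi)\cap X))\;\leq\;\biLip^{2(d-1)}\,\Haus_g^{d-1}(\partial X).
\end{equation*}
Summing over $y\in Y$ and observing that each interior face $\Lag_y\cap\Lag_z$ is shared by exactly two cells yields $2\sum_{y<z}\Haus_g^{d-1}(\Lag_y\cap\Lag_z)\leq N\,\biLip^{2(d-1)}\Haus_g^{d-1}(\partial X)$. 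Finally, since each cell has mass at least $\eps$ and the total mass equals one, $N\leq 1/\eps$, which converts the $N$-dependence into an $\eps^{-1}$ factor and produces $M\leq \tfrac{\nr{\rho}_\infty\biLip^{2(d-1)}\Haus_g^{d-1}(\partial X)}{2\eps}$. Substituting back gives $Q(v)\geq C\eps^3\nr{v}^2$ with a constant $C$ depending only on $\nr{\rho}_\infty$, $\Haus_g^{d-1}(\partial X)$, $\biLip$, $C_\nabla$, and $\Cpw$.
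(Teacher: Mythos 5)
Your proof is correct and takes a genuinely different route from the paper's. The paper identifies $-\D^2\Phi(\psi)$ with the Laplacian of the weighted graph $(w_{yz})$ on $Y$, invokes the \emph{discrete} Cheeger inequality (Theorem~\ref{th:cheeger}) to bound its second eigenvalue from below by $\frac{1}{2}\h(w)^2\min_y d_y$, and then controls the graph Cheeger constant $\h(w)$ and the minimum degree in terms of the continuous Cheeger constant $\h(\rho)$, which is in turn bounded below via \eqref{eq:L1-poincare} (Lemma~\ref{lem:PWtoCheeger}). You instead apply \eqref{eq:L1-poincare} directly to the piecewise-constant test function $\sum_y v_y\mathbf{1}_{\Lag_y(\psi)}$ and convert the resulting $\ell^1$-type bound into the $\ell^2$-type Hessian quadratic form by Cauchy--Schwarz, closing the loop with a bound of order $\varepsilon^{-1}$ on $M=\sum_{y<z}\mu_{yz}$. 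This bypasses both the discrete Cheeger inequality and the intermediate continuous Cheeger constant, and is shorter. Two points worth making precise: (a) the $\mathrm{BV}$ extension of \eqref{eq:L1-poincare} that you invoke is exactly the content of the density argument inside the proof of Lemma~\ref{lem:PWtoCheeger}, and it applies to your step function because each $\Lag_y(\psi)\cap X$ is a Lipschitz domain by Lemma~\ref{lem:LipDomain}; (b) your resulting constant is $C=\bigl(C_\nabla\Cpw^2\nr{\rho}_\infty\biLip^{2(d-1)}\Haus_g^{d-1}(\partial X)\bigr)^{-1}$, which --- unlike the paper's constant \eqref{eq:StrongConcavityConstant} --- does not involve $\eps_{tw}$. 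The $\eps_{tw}$ in the paper enters through the degree bound $d_y\le\eps_{tw}^{-1}\abs{\partial\Lag_y(\psi)}_\rho$ in Step~1 of their proof; your argument never needs a lower bound on $\abs{\Lag_{y,z}(\psi)}_\rho$ in terms of $w_{yz}$, only an upper one, and since $\eps_{tw}\le 2C_\nabla$ this direction of the estimate is where the paper gives up sharpness. Both arguments produce the same cubic power of $\eps$, but via different bookkeeping: the paper gets $\eps^2$ from $\h(w)^2$ and $\eps$ from $\min_y d_y$, whereas you get $\eps^2$ from squaring the Poincar\'e lower bound and $\eps$ from $M\lesssim N\lesssim\eps^{-1}$.
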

\begin{remark}\label{rem:nonsharpnessofisoperimetric} Note that the upper bound on the largest non-zero
    eigenvalue of $\D^2\Phi(\psi)$ degrades as $N$ grows to infinity,
    since $\eps$ is of the order of $1/N$. A possible place for
    improvement is the reverse isoperimetric inequality stated in
    equation~\eqref{eq:Hausbd}. Currently, we are vastly overestimating the size of the boundary of a Laguerre cell by bounding it with the area of the boundary of the whole domain, additionally we are bounding the density $\rho$ by its supremum, and paying in terms of the constant $\biLip$. Note that \eqref{eq:Hausbd} in its current form can \emph{never} achieve equality, even for constant density $\rho$ and the quadratic cost function where $\biLip=1$, as equality would only happen for a Laguerre cell that occupied the whole domain $X$, which can not happen as \emph{all} Laguerre cells have nonzero mass. 
     To improve the inequality, one
    could try to control the anisotropy of Laguerre cells and bound the area of the boundary of a cell by some fraction of the area of $\partial X$, however this would require 
    assumptions on the distribution of the points $Y$ and on
    $\nu\in\Prob(Y)$. We believe that such an upper bound on the
    anisotropy of Laguerre cells would be interesting in itself, and
    heuristically seems feasible to view as a discrete analogue of the regularity results
    for optimal transport (interpreting the Laguerre cells associated
    to $\psi:Y\to\Rsp$ as the $c^*$-subdifferentials of $\psi$).
\end{remark}

\begin{remark}
  Note that unlike the domain $X$, the support of the density $\rho$
  does not need be $c$-convex. We provide in
  Appendix~\ref{app:poincare} an example of a radial measure on
  $\Rsp^d$ whose support is an annulus (hence is not simply connected)
  but whose Poincaré-Wirtinger constant \eqref{eq:L1-poincare} is
  nonetheless positive.
\end{remark}

The end of the section is devoted to the proof of Theorem
\ref{th:UniformConcavity}. It relies on the fact that
$-\D^2\Phi(\psi)$ can be regarded as the Laplacian matrix of a
weighted graph on $Y$, whose first nonzero eigenvalue can be
controlled from below using the Cheeger constant of the weighted
graph. In turn, this weighted Cheeger constant can be controlled using
the Poincar\'e-Wirtinger inequality.

%--------
\subsection{Poincar\'e inequality and continuous Cheeger constant}
We start by proving that the finiteness of the Poincaré-Wirtinger
constant of the weighted domain $(X,\rho)$ implies the positivity of
the weighted Cheeger constant, defined in \eqref{eq:ContCheeger}. In the following, a \emph{Lipschitz domain} denotes  the closure of an open set with Lipschitz
boundary.

\begin{lemma} Assume \eqref{eq:MTW} and that $X$ is compact and $c$-convex. Then 
  \begin{itemize}
  \item[(i)] $X$ is a Lipschitz domain,
  \item[(ii)] for any $\psi \in \K^+$ and $y$ in $Y$, $\Lag_y(\psi) \cap X$
    is a Lipschitz domain.
  \end{itemize}
  \label{lem:LipDomain}
\end{lemma}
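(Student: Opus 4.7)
The strategy for both parts is to work in a $c$-exponential chart, where the sets become convex, and then transfer the Lipschitz boundary regularity back to $\Omega$ via the $\Class^{1,1}$ diffeomorphism $\exp_y^c$. The two key ingredients I will invoke are: (a) a compact convex subset of $\Rsp^d$ with nonempty interior is a Lipschitz domain (a classical fact, since near each boundary point the convex set can be written, after rotation, as the epigraph of a convex function, which is locally Lipschitz on the interior of its domain); and (b) if $D \subseteq \Rsp^d$ is a Lipschitz domain and $F$ is a bi-Lipschitz homeomorphism from a neighborhood of $D$ onto its image, then $F(D)$ is again a Lipschitz domain.

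For (i), fix any $y \in Y$. By $c$-convexity of $X$ the set $X_y := (\exp_y^c)^{-1}(X)$ is convex, and it is compact because $X$ is compact and $\exp_y^c$ is a homeomorphism onto $\Omega$. Since $X$ carries a probability density it has positive $\Haus^d_g$-measure, hence $X_y$ has positive Lebesgue measure and therefore nonempty interior. Thus $X_y$ is a compact convex set in $\Rsp^d$ with nonempty interior, so by (a) it is a Lipschitz domain. Applying (b) to $F = \exp_y^c$, which is bi-Lipschitz on $X_y$ because it is $\Class^{1,1}$ on the compact set $X_y$, we conclude that $X = \exp_y^c(X_y)$ is a Lipschitz domain.

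For (ii), fix $y \in Y$ and $\psi \in \K^+$, and work in the exponential chart at $y$. By \eqref{eq:MTW}, each function $p \mapsto c(\exp_y^c p, y) - c(\exp_y^c p, z)$ is quasi-convex, so the Laguerre cell $L_y(\psi) = (\exp_y^c)^{-1}(\Lag_y(\psi))$ is convex as an intersection of convex sublevel sets. Intersecting with the convex set $X_y$ from part (i), the set $L_y(\psi) \cap X_y = (\exp_y^c)^{-1}(\Lag_y(\psi) \cap X)$ is convex and compact. The assumption $\psi \in \K^+$ gives $\rho(\Lag_y(\psi)) > 0$, and since $\spt(\rho) \subseteq X$, this forces $\Lag_y(\psi) \cap X$ to have positive $\Haus_g^d$-measure, hence $L_y(\psi) \cap X_y$ has positive Lebesgue measure and therefore nonempty interior. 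By (a), $L_y(\psi) \cap X_y$ is a Lipschitz domain, and applying (b) to $\exp_y^c$ as above yields that $\Lag_y(\psi) \cap X$ is a Lipschitz domain.

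The only nontrivial point is the bi-Lipschitz transfer (b): this follows because $\partial F(D) = F(\partial D)$ and, after passing to local coordinates straightening the boundary graph of $D$, the bi-Lipschitz map $F$ transforms the graph of a Lipschitz function into a set which, after a further bi-Lipschitz change of coordinates adapted to a normal direction of $\partial F(D)$, is again the graph of a Lipschitz function; this is standard and I do not expect any obstacle. Everything else is bookkeeping about convex sets and the $\Class^{1,1}$ regularity of $\exp_y^c$ guaranteed by Definition~\ref{def:Loeper}.
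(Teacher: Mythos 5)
Your proof is correct and follows essentially the same approach as the paper's: pass to the $c$-exponential chart where $c$-convexity makes the sets convex, observe that positive $\rho$-measure forces nonempty interior, use that a compact convex set with nonempty interior is a Lipschitz domain, and push forward by the bi-Lipschitz diffeomorphism $\exp_y^c$. The paper's version is terser (it leaves facts (a) and (b) implicit), but the content is the same.
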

\begin{proof}
  By assumption, for any $y \in Y$ one can write $X = \exp_y^c(X_y)$
  where $X_y$ is a bounded convex subset of $\Rsp^d$ which must have
  nonempty interior since it supports an absolutely continuous
  probability measure. Moreover, the map $\exp_y^c$ is a
  diffeomorphism, hence bi-Lipschitz. This implies (i), while (ii)
  follows from the exact same arguments, remembering that
  $\rho(\Lag_y(\psi)) > 0$.
\end{proof}
Given a Lipschitz domain $A$ of $X$ we write, slightly abusing notations, 
$$\abs{\partial A}_\rho := \int_{\partial A \cap \interior{X}} \rho(x)
\d\Haus^{d-1}_g(x) ~ \hbox{ and }~ \abs{A}_\rho := \int_{A} \rho(x) \d\Haus^{d}_g(x).$$ 
\begin{lemma} \label{lem:PWtoCheeger}
Let $X$ be a compact domain of $\Omega$ and $\rho$ in $\Class^0(X)$ be
a probability density with finite Poincaré-Wirtinger constant
\eqref{eq:L1-poincare}. Then the weighted Cheeger constant of
$(X,\rho)$ is positive, that is
\begin{equation}
\h(\rho) := \inf_{A \subseteq X}
\frac{\abs{\partial A}_\rho}{\min(\abs{A}_\rho, \abs{X\setminus A}_\rho)}
 \geq \frac{2}{\Cpw}, \label{eq:ContCheeger}
\end{equation} where the infimum is taken over Lipschitz domains $A\subseteq
 \interior{X}$ whose boundary has finite $\Haus^{d-1}_g$--measure.
\end{lemma}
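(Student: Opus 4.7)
The approach is to apply the Poincaré-Wirtinger inequality \eqref{eq:L1-poincare} to a $\Class^1$ approximation of the normalized step function
\begin{equation*}
f := \frac{\one_A}{\abs{A}_\rho} - \frac{\one_{X\setminus A}}{\abs{X\setminus A}_\rho},
\end{equation*}
which satisfies $\Exp_\rho(f)=0$ and $\nr{f}_{\LL^1(\rho)}=2$, while its distributional gradient concentrates on $\partial A$ with total $\rho$-mass $\left(\tfrac{1}{\abs{A}_\rho}+\tfrac{1}{\abs{X\setminus A}_\rho}\right)\abs{\partial A}_\rho = \abs{\partial A}_\rho/(\abs{A}_\rho\abs{X\setminus A}_\rho)$, using $\abs{A}_\rho+\abs{X\setminus A}_\rho=1$. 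The normalizations are chosen precisely so that the inequality produced by \eqref{eq:L1-poincare} carries the factor $2$ required on the left-hand side of the target estimate.

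To make this rigorous, fix a Lipschitz domain $A\subseteq \interior{X}$ with $\Haus^{d-1}_g(\partial A)<+\infty$. Since $A$ is closed and contained in $\interior{X}$, one has $\partial A\subseteq \interior{X}$, so that $\abs{\partial A}_\rho=\int_{\partial A}\rho\,\d\Haus^{d-1}_g$. Let $d_{\partial A}$ denote the Riemannian distance to $\partial A$ and set
\begin{equation*}
\phi_\eta(x) := \min\bigl(1,\,d_{\partial A}(x)/\eta\bigr)\one_A(x),\qquad
f_\eta := \Bigl(\tfrac{1}{\abs{A}_\rho}+\tfrac{1}{\abs{X\setminus A}_\rho}\Bigr)\phi_\eta-\tfrac{1}{\abs{X\setminus A}_\rho}.
\end{equation*}
Then $\phi_\eta$ is Lipschitz, equals $1$ on $\{x\in A:d_{\partial A}(x)\geq\eta\}$ and vanishes on $X\setminus A$; it is made $\Class^1$ by mollification in local charts (or alternatively \eqref{eq:L1-poincare} extends to Lipschitz functions by density in $W^{1,1}$). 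Dominated convergence gives $f_\eta\to f$ in $\LL^1(\rho)$ and $\Exp_\rho(f_\eta)\to 0$, while the coarea formula applied to $\phi_\eta$, combined with the continuity of $\rho$ and the Lipschitz regularity of $\partial A$, yields
\begin{equation*}
\nr{\nabla \phi_\eta}_{\LL^1(\rho)}
=\int_0^1\!\!\int_{\{d_{\partial A}=s\eta\}\cap A}\!\!\rho\,\d\Haus^{d-1}_g\,\d s
\;\xrightarrow[\eta\to 0]{}\; \abs{\partial A}_\rho.
\end{equation*}

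Applying \eqref{eq:L1-poincare} to $f_\eta$ and passing to the limit gives
\begin{equation*}
2 \;=\; \nr{f-\Exp_\rho(f)}_{\LL^1(\rho)}
\;\leq\; \Cpw\cdot\frac{\abs{\partial A}_\rho}{\abs{A}_\rho\,\abs{X\setminus A}_\rho}.
\end{equation*}
Since $\abs{A}_\rho+\abs{X\setminus A}_\rho=1$, the larger factor is at least $\tfrac12$, so $\abs{A}_\rho\,\abs{X\setminus A}_\rho\leq \min(\abs{A}_\rho,\abs{X\setminus A}_\rho)$. Taking the infimum over $A$ concludes that $\h(\rho)\geq 2/\Cpw$.

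The only technical point is the convergence $\nr{\nabla \phi_\eta}_{\LL^1(\rho)}\to\abs{\partial A}_\rho$, i.e.\ the identification of the Minkowski $\rho$-content of $\partial A$ with its $\rho$-surface measure. This is classical for Lipschitz domains and continuous weights, and uses only that $d_{\partial A}$ is $1$-Lipschitz with $\abs{\nabla d_{\partial A}}=1$ almost everywhere; all remaining steps are purely a matter of normalization.
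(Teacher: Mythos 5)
Your choice of test function is just a rescaling of the paper's: you use $f=\one_A/\abs{A}_\rho-\one_{X\setminus A}/\abs{X\setminus A}_\rho$ while the paper uses $\chi_A$, and both give exactly the inequality $2\abs{A}_\rho\abs{X\setminus A}_\rho\leq\Cpw\abs{\partial A}_\rho$ once the Poincaré–Wirtinger inequality is applied. Where you genuinely diverge is in the approximation step: the paper invokes the abstract BV density theorem of Attouch--Buttazzo--Michaille, together with narrow convergence of the total variation measures, to pass from $\Class^1$ test functions to the indicator $\chi_A$; you instead build explicit Lipschitz approximations $\phi_\eta$ out of the distance function to $\partial A$ and use the coarea formula to identify the limit of $\nr{\nabla\phi_\eta}_{\LL^1(\rho)}$ with $\abs{\partial A}_\rho$. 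Your route is more elementary and self-contained, at the price of needing the ``Minkowski $\rho$-content $=$ $\rho$-surface measure'' identification for Lipschitz boundaries, which you correctly flag as the one nontrivial analytic ingredient. Both are valid ways to the same core estimate.

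There is, however, a gap in your last step. You write that $\abs{A}_\rho\abs{X\setminus A}_\rho\leq\min(\abs{A}_\rho,\abs{X\setminus A}_\rho)$ (which is indeed true, since the \emph{larger} factor is at most $1$ --- your stated justification ``the larger factor is at least $\tfrac12$'' actually proves the reverse bound $\abs{A}_\rho\abs{X\setminus A}_\rho\geq\tfrac12\min$). But the direction $\text{product}\leq\min$ is the useless one here: from
\begin{equation*}
\frac{2}{\Cpw}\leq\frac{\abs{\partial A}_\rho}{\abs{A}_\rho\abs{X\setminus A}_\rho}
\end{equation*}
and $\abs{A}_\rho\abs{X\setminus A}_\rho\leq\min$, one gets $\frac{\abs{\partial A}_\rho}{\abs{A}_\rho\abs{X\setminus A}_\rho}\geq\frac{\abs{\partial A}_\rho}{\min}$, so the chain does not close. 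The usable inequality is $\abs{A}_\rho\abs{X\setminus A}_\rho\geq\tfrac12\min$, which only yields $\h(\rho)\geq 1/\Cpw$. In fact this is all the paper's own argument proves as well: the claimed constant $2/\Cpw$ in \eqref{eq:ContCheeger} does not actually follow from the shared key inequality, and the honest constant is $1/\Cpw$. Since the lemma is used only to establish positivity of the Cheeger constant (and the factor of $2$ is absorbed into the unspecified constant in Theorem~\ref{th:UniformConcavity}), this does not affect the rest of the paper; but you should fix the direction of your inequality and, to be safe, state the conclusion with the constant $1/\Cpw$.
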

%-----
The proof is based on properties of functions with bounded
variation. For more details on this topic, we refer the reader to
\cite{attouch2014variational}. Although the discussion in the reference is on Euclidean spaces, the relevant results easily extend to the Riemannian case, as $\exp^c_y$ serves as a global coordinate system on all of $\Omega$.
\begin{proof} Let $A$ be a Lipschitz domain $A$ of $\interior{X}$. 
  Since $A$ has a Lipschitz boundary with finite
  area, its indicator function $\chi_A$ has bounded variation in
  $\interior{X}$. By the density theorem \cite[Theorem
    10.1.2]{attouch2014variational}, there exists a sequence of
  $\Class^1$-functions $f_n$ on $\interior{X}$ that converges to
  $\chi_A$ in the sense of intermediate convergence (whose definition is not important here). By
  \eqref{eq:L1-poincare},
$$
\nr{f_n-\Exp_\rho(f_n)}_{L^1(\rho)} \leq \Cpw\ \nr{\nabla f_n}_{L^1(\rho)}.
$$ Since intermediate convergence is stronger than $L^1$
convergence, the continuity of $\rho$ implies
$$
\lim_{n \to \infty} \nr{f_n-\Exp_\rho(f_n)}_{L^1(\rho)} = \nr{\chi_A-\Exp_\rho(\chi_A)}_{L^1(\rho)}
%=  2 \frac{\abs{A}_\rho\ \abs{X\setminus A}_\rho}{\abs{X}_\rho}.
=  2 \abs{A}_\rho\ \abs{X\setminus A}_\rho.
$$ Note that we used the fact that $\rho$ is a probability measure,
i.e.  $\rho(X) = 1$.  Proposition 10.1.2 of
\cite{attouch2014variational} implies that the total variation measure
$|Df_n|$ narrowly converges to $|D\chi_A|$, which with the continuity of
$\rho$ implies that $\int_{\Omega}  |Df_n|\rho\d \Haus^{d}_g$ converges to
$\int_{\Omega}  |D\chi_A| \rho\d \Haus^{d}_g= \abs{\partial A}_\rho$. The relation
$|Df_n|=\nr{\nabla f_n}_g \d \Haus^{d}_g$ then gives
$$
\lim_{n\to \infty} \nr{\nabla f_n}_{L^1(\rho)} \leq \abs{\partial A}_\rho.
$$ Combining the previous equations together leads to the desired
inequality.
\end{proof}

%--------
\subsection{Cheeger constant of a graph}
The goal of this section is to give a lower bound of the second
eigenvalue of $-\D^2\Phi(\psi)$ in terms of the Cheeger constant of
the weighted graph induced by this matrix. An unoriented weighted
graph can always be represented by its adjacency matrix
$(w_{yz})_{(y,z) \in Y^2}$, a symmetric matrix with zero diagonal
entries.  We introduce a few definitions from graph theory, following
the conventions of \cite{friedland2002cheeger}.

\begin{definition} Let $(w_{yz})_{(y,z) \in Y^2}$ be a weighted graph over $Y$.
  The (weighted) \emph{degree} of a vertex $y$ is $d_y := \sum_{z\neq
    y} w_{yz}$. The (weighted) \emph{Laplacian} is the matrix
  $(L_{yz})_{(y,z)\in Y^2}$ whose entries are $L_{yz} = -w_{yz}$ for
  $y\neq z$ and $L_{yy} = d_y$.
\end{definition}
\begin{definition} The \emph{Cheeger constant}
  of a weighted graph $(w_{yz})_{(y,z) \in Y^2}$ over a point set $Y$
  is given by
       $$ \h(w) := \min_{S\subseteq Y} \frac{\abs{\partial
           S}_{w}}{\min(\abs{S}_{w}, \abs{Y\setminus S}_{w})},
       $$
 $$ \hbox{where }\abs{\partial S}_{w} := \sum_{y\in S, z\not\in S} w_{yz} \quad\hbox{
         and } \quad \abs{S}_{w} := \sum_{y\in S} d_y. $$
 \end{definition}

The (weighted) Cheeger inequality bounds from below the first nonzero
eigenvalue of the Laplacian of a weighted graph, denoted $\lambda(w)$,
from its Cheeger constant and its minimal degree. The formulation we
use can be deduced from Corollary 2.2 of \cite{friedland2002cheeger}
and from the inequality $1-\sqrt{1-x^2} \geq x^2/2.$
%----
\begin{theorem}[Cheeger inequality]\label{th:cheeger}
 % Let $(w_{yz})_{(y,z) \in Y^2}$ be a weighted graph over $Y$.
  $
  \lambda(w) \geq \frac{1}{2}\ \h^2(w) \cdot \min_{y\in Y} d_y.
$
\end{theorem}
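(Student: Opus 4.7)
The plan is to deduce this Cheeger-type bound from a known estimate in the graph-theoretic literature together with an elementary real-variable inequality, rather than building the whole argument from scratch. Specifically, Corollary~2.2 of \cite{friedland2002cheeger} yields, for the Laplacian $L$ of a weighted graph with the very normalization used in the excerpt, a lower bound of the form
$$ \lambda(w) \;\geq\; \min_{y\in Y} d_y \cdot \left(1 - \sqrt{1-\h^2(w)}\right). $$
From this, the theorem follows at once by applying the pointwise estimate $1 - \sqrt{1-x^2} \geq x^2/2$ for $x \in [0,1]$. This last estimate is easy: the function $g(x) := 1 - \sqrt{1-x^2} - x^2/2$ vanishes at $x=0$ and satisfies $g'(x) = x\,(1/\sqrt{1-x^2} - 1) \geq 0$ on $[0,1]$.

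For completeness, if one wished to reprove the cited bound, the natural route is the classical Cheeger argument for graphs. One starts from the Rayleigh quotient characterization
$$ \lambda(w) = \min_{\substack{f \in \Rsp^Y\\ f \perp \one}} \frac{\tfrac{1}{2}\sum_{y,z\in Y} w_{yz}(f(y)-f(z))^2}{\sum_{y\in Y} f(y)^2}, $$
takes an eigenvector $f$ realizing the minimum, and (after replacing $f$ by $f-c$ for a well-chosen constant and then considering its positive part) reduces to a nonnegative function $g$ whose support has weighted volume at most $\tfrac{1}{2}\sum_y d_y$. One then applies a co-area identity over the superlevel sets $S_t := \{y : g(y)^2 > t\}$: the numerator of the Rayleigh quotient controls $\int_0^\infty \abs{\partial S_t}_w \,\dd t$ up to a Cauchy--Schwarz factor, and the denominator equals $\int_0^\infty \abs{S_t}_w\,\dd t$ (weighted by degrees). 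Invoking $\abs{\partial S_t}_w \geq \h(w)\,\abs{S_t}_w$ from the definition of the graph Cheeger constant, and comparing the two integrals, yields a bound on the Rayleigh quotient. The factor $\min_y d_y$ arises when converting between the weighted quadratic form $\sum_y d_y f(y)^2$ natural for the co-area step and the unweighted form $\sum_y f(y)^2$ appearing in the Rayleigh quotient for $L$.

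The main difficulty, and essentially the only one, is bookkeeping: one must verify that the definitions of degree $d_y$, Laplacian $L$, Cheeger constant $\h(w)$, and eigenvalue $\lambda(w)$ used in \cite{friedland2002cheeger} coincide with those fixed in the excerpt (in particular, that $\lambda(w)$ denotes the smallest nonzero eigenvalue of the unnormalized $L$ with $L_{yy} = d_y$, and that $\h(w)$ uses the weighted volume $\abs{S}_w = \sum_{y\in S} d_y$ rather than cardinality). Once this correspondence is checked, the two-line derivation above is complete.
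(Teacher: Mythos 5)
Your proposal matches the paper's own proof exactly: the paper likewise deduces the statement by citing Corollary~2.2 of \cite{friedland2002cheeger} and then applying the elementary inequality $1 - \sqrt{1-x^2} \geq x^2/2$. The additional co-area sketch you include for completeness is not part of the paper's argument but does not introduce any discrepancy.
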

We now proceed to the proof of the main theorem of this section.

%--------------------------
%--------------------------
\subsection{Proof of Theorem~\ref{th:UniformConcavity}}
Let $\psi$ be a function in $\K^\eps$ and consider the
weighted graph $(w_{yz})_{(y,z)\in Y^2}$ given by
\begin{equation*}
 w_{yz} := -\frac{\partial^2\Phi}{\partial \one_y\partial \one_z}
 (\psi) = \int_{\Lag_{y,z}(\psi)} \frac{\rho(x)}{\nr{D_x c(x,y) -
     D_x c(x,z)}_{g}}\d\Haus^{d-1}_g(x)
\end{equation*}
for $y\neq z$ in $Y$, and with zero diagonal entries ($w_{yy} =
0$). In the formula above, we used the notation
$\Lag_{y,z}(\psi)=\Lag_{y}(\psi) \cap \Lag_{z}(\psi)$ for the facet
between two Laguerre cells. By construction, the Laplacian matrix of
this weighted graph is the Hessian matrix $-\D^2\Phi(\psi)$, so that
Theorem~\ref{th:cheeger} directly gives us a lower bound on the first
nonzero eigenvalue of $-\D^2\Phi(\psi)$. To complete the proof, we
need to bound the Cheeger constant and the minimum degree of the graph
$w$ from below.

%--------
\subsubsection{Step 1} 
The goal here is to bound from below the discrete Cheeger constant
$\h(w)$ in terms of the continuous weighted Cheeger constant
$\h(\rho)$ and the constants introduced in \eqref{eq:constants}. By
definition of the constants $\eps_{tw}$ and $C_\nabla$, for every
$y\neq z$ in $Y$, one has
\begin{equation}
\eps_{tw}\ w_{yz} 
\leq |\Lag_{y,z}(\psi)|_{\rho}
\leq 2 C_\nabla w_{yz}.
\label{eq:cmpWrho}
\end{equation}
Consider a subset $S$ of $Y$, and let $A = \cup_{y\in S}
\Lag_y(\psi)$.  Then, the intersection of the boundary of $A$ with $X$
is contained an union of facets of Laguerre cells, namely
\begin{equation}
  \partial A \cap \interior{X} \subseteq \bigcup_{y \in S,z\not\in S} \Lag_{y,z}(\psi).\label{eq:bdA}
\end{equation}
The two inequalities \eqref{eq:cmpWrho} and \eqref{eq:bdA} imply a
lower bound on the numerator of the Cheeger constant:
\begin{equation}\label{eq:cheeger-numerator}
\abs{\partial A}_\rho \leq \sum_{y\in S,z\not\in S}
\abs{\Lag_{y,z}(\psi)}_\rho 
\leq 2 C_\nabla\,\abs{\partial S}_{w}.
\end{equation}
%----
We now need to bound the denominator of the Cheeger constant from
above, which requires us to control the weighted degrees
$\d_y$. Note that
\begin{equation}\label{eq:dyy}
\d_y 
=\sum_{z\neq y} w_{yz}
\leq \frac{1}{\eps_{tw}}\sum_{z\neq y} | \Lag_{y,z}(\psi)|_{\rho} 
\leq \frac{1}{\eps_{tw}}|\partial \Lag_y(\psi)|_{\rho},
\end{equation}
where the second inequality comes from the fact that the facets
$\Lag_{y,z}(\psi)$ form a partition of the boundary $\partial
\Lag_y(\psi) \cap \interior{X}$ up to a $\Haus^{d-1}_g$--negligible
set. To see that this is the case, it suffices to remark that in the
exponential chart of $y$, the intersection of two distinct facets
adjacent to $y$ has a finite $\Haus^{d-2}_g$--measure, as implied by
Proposition~\ref{prop:HausSigma}.

%% Let us now pick a point $y$ in $Y$. By \eqref{eq:MTW}, the Jacobians
%% of the diffeomorphisms $\exp_y^c:\Omega_y \to \Omega$ are uniformly
%% bounded by a positive constant $J$ on the compact sets $X_y :=
%% (\exp_{y}{c})^{-1}(X)$. The set $(\exp_y^c)^{-1}(\Lag_{y,z}(\psi) \cap
%% \Lag_{y,z'}(\psi))$ is included in a singular set
%% $\Sigma(\bm{\lambda})$ defined in Section
%% \ref{section:localregularity} for a certain choice of
%% $\bm{\lambda}$. By Proposition \ref{prop:HausSigma}, the union
%% $\partial \Lag_y(\psi)=\cup_{z\neq y}\Lag_{y,z}(\psi)$ is disjoint up
%% to a $\Haus^{d-1}$--negligible set, thus implying
In order to apply the (continuous) Cheeger inequality, we need to
replace the weighted area of the boundaries of Laguerre cells in \eqref{eq:dyy} by the
weighted volume of the cells. We have
\begin{equation*}
\begin{aligned}
  \Haus^{d-1}_g(\partial \Lag_y(\psi)) &\leq \biLip^{d-1} \Haus^{d-1}((\exp_y^c)^{-1} \partial \Lag_y(\psi)) \\
  &\leq \biLip^{d-1} \Haus^{d-1}(\partial X_y) \\
  &\leq \biLip^{2(d-1)} \Haus^{d-1}_g(\partial X).
\end{aligned}
\end{equation*}
The first and third inequalities use the definition of the
bi-Lipschitz constant $\biLip$ of the exponential map, while the second
inequality uses the monotonicity of the $\Haus^{d-1}$--measure of the
boundary of a convex set with respect to inclusion (see \cite[p.211]{schneider}). Using the
assumption $\abs{\Lag_y(\psi)}_\rho \geq \eps$, this gives us a (rather
crude) reverse isoperimetric inequality
\begin{equation}
  \begin{aligned}
  \abs{\partial \Lag_y(\psi)}_\rho &\leq \nr{\rho}_{\infty} \Haus^{d-1}_g(\partial \Lag_y(\psi))\\
  &\leq   \frac{\nr{\rho}_{\infty}}{\eps}\biLip^{2(d-1)} \Haus^{d-1}_g(\partial X) \abs{\Lag_y(\psi)}_\rho
  \end{aligned}
  \label{eq:Hausbd}
\end{equation}
We remark here that the above inequality is never sharp, see also Remark \ref{rem:nonsharpnessofisoperimetric}.
Combining
\eqref{eq:dyy}, \eqref{eq:Hausbd} and $\abs{A}_\rho = \sum_{y\in S}
\abs{\Lag_y(\psi)}_\rho$ we obtain
$$ \abs{S}_{w} = \sum_{y\in S} \d_y \leq \frac{1}{\eps}
\frac{\nr{\rho}_{\infty} \biLip^{2(d-1)} \Haus^{d-1}_g(\partial
  X)}{\eps_{tw}} \abs{A}_\rho.
$$ The same inequality holds for the complement $\abs{X\setminus
  S}$. We combine the previous inequality with Equation
\eqref{eq:cheeger-numerator} and with Lemma~\ref{lem:PWtoCheeger} to
get a lower bound on the Cheeger constant
\begin{equation}\label{eq:minoration_cheeger_constant}
\h(w) \geq 
\frac{\eps_{tw}\eps}{\biLip^{2(d-1)} C_\nabla \Haus^{d-1}_g(\partial X) \nr{\rho}_{\infty} \Cpw}.
\end{equation}
Note that, in order to apply Lemma~\ref{lem:PWtoCheeger} we
implicitly used the fact that $A$ is a Lipschitz domain (as a finite
union of Lipschitz domains, see Lemma~\ref{lem:LipDomain}) whose
boundary has finite $\Haus^{d-1}_g$--measure (by
Equation~\eqref{eq:Hausbd}).

%--------
\subsubsection{Step 2} In order to apply the Cheeger inequality, we still need to  bound from below the weighted degree  $d_y$. By \eqref{eq:cmpWrho} one has, using the crucial fact that $\abs{\partial \Lag_{y}(\psi)}_\rho$ is the measure of $\partial \Lag_y(\psi)\cap \interior{X}$,
$$ d_y = \sum_{z\neq y} w_{yz} \geq
\frac{1}{2C_\nabla} \sum_{z\neq y} \abs{\Lag_{y,z}(\psi)}_\rho \geq
\frac{1}{2C_\nabla} \abs{\partial \Lag_{y}(\psi)}_\rho.
$$
Taking $A=\Lag_{y}(\psi)$ in the definition of the weighted Cheeger constant $\h(\rho)$ defined in Lemma \ref{lem:PWtoCheeger}, one gets 
$$
\abs{\partial \Lag_{y}(\psi)}_\rho \geq \h(\rho)\ \min( \abs{\Lag_{y}(\psi)}_\rho , \abs{ X \setminus \Lag_{y}(\psi)}_\rho )
\geq \h(\rho) \varepsilon.
$$ The last inequality comes from the assumption that each Laguerre
cell has a mass greater than $\varepsilon$ and that $X \setminus
\Lag_{y}(\psi)$ also contains a Laguerre cell (except for the trivial
case where $Y$ is a singleton). We deduce
\begin{equation}
d_y \geq \frac{\eps}{C_\nabla\Cpw}. \label{eq:dyylb}
\end{equation}

\subsubsection{Step 3}  Combining the Cheeger inequality
%\noindent\textbf{End of proof of Theorem \ref{th:UniformConcavity}}. 
 with Equation \eqref{eq:minoration_cheeger_constant} and
 \eqref{eq:dyylb} we have $\lambda(w)\geq C\eps^3$ where
\begin{equation}
C := 
 \frac{\eps_{tw}^2}{2\biLip^{4(d-1)} C_\nabla^3 \left(\Haus^{d-1}_g(\partial X)\right)^2
   \nr{\rho}^2_{\infty} \Cpw^3}.\label{eq:StrongConcavityConstant}
\end{equation} Since the graph induced by the Hessian is connected, the kernel of 
$- \D^2\Phi(\psi)$ is equal to the space of constant functions over
$Y$, implying that $\Ker(- \D^2\Phi(\psi))^\perp = E_Y$.  Then, using
the variational characterization of the first nonzero eigenvalue of
the Laplacian matrix we get:
 \begin{equation}
   C\eps^3 \leq \lambda(w) = \min_{v \in E_Y} \frac{\sca{-D^2\Phi(\psi)}{v}}{\nr{v}^2}. \tag*{\qed}
 \end{equation}

\section{Convergence of the damped Newton algorithm}
\label{sec:Newton}
The goal of this section is to show the convergence of the Damped
Newton algorithm for semi-discrete optimal transport. This follows in
fact from a more general result. We establish in Section
\ref{section:damped-newton-general} the convergence of the damped
Newton algorithm (Algorithm~\ref{algo:newton}) under general
assumptions on the functional. We finally apply this algorithm to the
semi-discrete optimal transport problem, using the intermediate
results (regularity and strict concavity of the Kantorovich
functional) proven in
Section~\ref{sec:Regularity} and \ref{sec:Concavity}.

%-----------------------------------------------
\subsection{General damped Newton algorithm}\label{section:damped-newton-general}
Let $Y$ be a finite set and denote $\Rsp^Y$ the space of functions on
$Y$. We consider $\Prob(Y)$, the space of probability measures on $Y$, 
as a subset of $\Rsp^Y$. Finally, we denote by $E_Y$ the space of
functions on $Y$ who sum to zero. In this section, we show that
Algorithm~\ref{algo:newton} can be used to solve non-linear equations
$G(\psi) = \mu$ where $\mu \in \Prob(Y)$ and the map $G: \Rsp^Y\to
\Prob(Y)$ satisfies some regularity and monotonicity assumptions.

%% Given a functional $G:Y^\Rsp\to Y^\Rsp$, a parameter $\eps>0$
%% and a real function $\mu$ on $Y$ satisfying $\min_y \mu_y \geq 2\eps$,
%% we introduce the set of functions
%% $$ \K^\eps := \{ \psi: Y\to\Rsp \mid G(\psi)_y \geq \eps \}.$$
%\begin{definition}Let $\kappa >0$ and $E_Y\subset Y^\Rsp$  be a sub-vectorial space of the space of functions on $Y$. We say that the restriction of  $G$ to $\mK^\eps$ is $(E_Y-\kappa)$-strictly concave if for any $\psi\in \mK^\eps$ 
%\begin{itemize}
%\item the map $DG(\psi):E_Y\to E_Y$ is an isomorphism.
%\item for any  $v$ in $E_Y$, $\sca{v}{\D G(\psi)v}\geq \kappa \nr{v}^2$.
%\end{itemize}
%\end{definition}

%% The following theorem shows that under regularity and monotony conditions on the map $G:Y^\Rsp\to Y^\Rsp$, the damped Newton algorithm converges, provided that the initial function $\psi:Y\to \Rsp$ belongs to $\mK^\eps$ and that $\nr{G(\psi)-\mu}$ is small enough. Note also that if 
%% $\nr{G(\psi)-\mu}$ is small enough, then the parameter step $\tau=2^{-\ell}$ equals $1$, and one recovers the classical Newton algorithm with a quadratic convergence.

%-----------------
\begin{proposition}\label{prop:newton}
  Let $G$ be a functional from $\Rsp^Y$ to $\Prob(Y)$ which is
  invariant under addition of a constant. Let $G(\psi) = \sum_{y\in
    Y} G_y(\psi) \one_y$ and
  $$\K^\eps = \{ \psi \in \Rsp^Y \mid \forall y\in Y,~ G_y(\psi) \geq
  \eps \},$$ and assume
  that $G$ has the following properties:
  \begin{itemize}
  \item[(i)] (Regularity) For every positive $\eps$, $G$ is
    $\Class^{1,\alpha}$ on $\K^\eps$. We let $L_\eps$ be the smallest
    constant such that
$$
    \forall \phi\neq\psi\in \K^\eps,~~ \frac{\nr{G(\phi)-G(\psi)}}{\nr{\phi - \psi}} +
    \frac{\nr{\D G(\phi)-\D G(\psi)}}{\nr{\phi - \psi}^\alpha}\leq L_\eps.
    $$
    \item[(ii)] (Uniform monotonicity) For every $\eps > 0$, there exists a positive constant $\kappa_\eps$ such that $G$ is $\kappa_\eps$--uniformly monotone on
      $\K^\eps \cap E_Y$:
      $$\forall \psi \in \K^\eps, \forall v\in E_Y,~~  \sca{v}{\D G(\psi)v}\geq \kappa_\eps \nr{v}^2.$$
  \end{itemize}
  Now, let $\mu \in \Prob(Y)$ and let $\psi_0$ be a function on $Y$
  such that the constant $\eps_0$ defined in \eqref{eq:nonzerocells}
  is positive. Set $\kappa := \min(\kappa_{\frac{1}{2}\eps_0},1)$ and $L :=
  \max(L_{\frac{1}{2}\eps_0},1)$. Then, the iterates $(\psi_k)$ of
  Algorithm~\ref{algo:newton} satisfy
  \begin{align}
    &\nr{G(\psi_{k+1}) - \mu} \leq (1 - \bar{\tau}_k/2)
    \nr{G(\psi_{k}) - \mu} \notag \\
    & \hbox{ where } \bar{\tau}_k := \min\left(
\frac{\kappa^{1+\frac{1}{\alpha}}\eps}{d^{\frac{1}{\alpha}}L^{\frac{1}{\alpha}}
  \nr{G(\psi_k) - \mu}}, 1\right). \label{eq:taubark}
  \end{align}
  In addition, as soon as $\bar{\tau}_k = 1$ one has
  $$ \nr{G(\psi_{k+1}) - \mu} \leq \frac{L\nr{G(\psi_{k}) -
      \mu}^{1+\alpha}}{\kappa^{1+\alpha}}.  $$ In particular, the
  damped Newton's algorithm converges globally with linear speed and
  locally with superlinear speed (quadratic speed if $\alpha=1$).
  \end{proposition}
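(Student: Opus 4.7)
The plan is to turn both hypotheses into a single one-step estimate on the residual $r_k := G(\psi_k)-\mu$, from which both convergence rates follow after controlling the line search.

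First I would exploit the invariance of $G$ under addition of constants: since $G(\psi_k)$ and $\mu$ both lie in $\Prob(Y)$, the residual $r_k$ belongs to $E_Y$, on which $\D G(\psi_k)$ is, by hypothesis (ii), a positive-definite operator with smallest eigenvalue at least $\kappa$. The pseudoinverse then produces a unique $d_k\in E_Y$ solving $\D G(\psi_k)d_k = -r_k$, and the monotonicity yields the a priori bound $\nr{d_k}\leq \nr{r_k}/\kappa$. Then I would use the $\Class^{1,\alpha}$ regularity from (i) to expand, for any $\tau$ such that the segment $[\psi_k,\psi_k+\tau d_k]$ remains in $\K^{\eps_0/2}$,
\begin{equation*}
\nr{G(\psi_k+\tau d_k)-\mu} \leq (1-\tau)\nr{r_k} + \tfrac{L \tau^{1+\alpha}}{1+\alpha}\nr{d_k}^{1+\alpha}.
\end{equation*}
Combined with the bound on $\nr{d_k}$, this shows that the residual-reduction backtracking condition holds as soon as $\tau \leq c\,\kappa^{1+1/\alpha}/(L^{1/\alpha}\nr{r_k})$ for an explicit universal constant $c$.

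Next I would translate the mass-positivity backtracking condition via the Lipschitz bound $\abs{G_y(\psi_k+\tau d_k)-G_y(\psi_k)}\leq L\tau\nr{d_k}$: because $G_y(\psi_k)\geq \mu_y-\nr{r_k}\geq 2\eps_0-\nr{r_k}$, a constraint of the form $\tau\leq c'\eps_0\kappa/(\Card(Y)^{1/\alpha} L\nr{r_k})$ (the dimensional factor coming from passing between $\ell^2$ and $\ell^\infty$ norms on $\Rsp^Y$) already forces $G_y(\psi_k+\tau d_k)\geq \eps_0$ for every $y$. Taking the minimum of the two upper bounds on $\tau$ reproduces $\bar{\tau}_k$ from \eqref{eq:taubark} up to universal constants. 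Since Step~2 of the algorithm returns the smallest $\ell$ making both conditions hold, one has $2^{-\ell}\geq \bar{\tau}_k/2$, which gives the claimed residual-contraction inequality and, by induction on $k$, keeps the iterates in $\K^{\eps_0}$ forever (so the uniform constants $L,\kappa$ remain valid throughout).

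The local superlinear rate is then an immediate consequence: whenever $\bar{\tau}_k=1$, the line search accepts $\ell=0$, and the Taylor remainder at $\tau=1$ collapses to $\nr{r_{k+1}}\leq L\nr{r_k}^{1+\alpha}/((1+\alpha)\kappa^{1+\alpha})$, which is the stated rate of order $1+\alpha$ (quadratic when $\alpha=1$). The main obstacle I anticipate is the coupling between the two backtracking conditions in the preceding step: one must verify that the mass-positivity requirement does not preempt the residual-reduction one, which is precisely what forces the factor of $\eps_0$ (and the dimension-dependent factor) into $\bar{\tau}_k$. It is also where one uses that the \emph{entire} segment $[\psi_k,\psi_k+2^{-\ell}d_k]$ stays in $\K^{\eps_0/2}$, so that the $\Class^{1,\alpha}$ Taylor estimate is applicable at every intermediate point — this is what motivates working with $\K^{\eps_0/2}$ rather than $\K^{\eps_0}$ when defining the uniform constants $L$ and $\kappa$.
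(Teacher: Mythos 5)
There is a genuine gap in how you handle the mass-positivity backtracking condition, and the gap appears precisely at the step you flag as the ``main obstacle.''

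You correctly set up the Newton direction bound $\nr{d_k}\leq\nr{r_k}/\kappa$ and the Taylor expansion $G(\psi_\tau)=(1-\tau)G(\psi_k)+\tau\mu+R(\tau)$, and you use the latter to control the residual-reduction condition. But for the condition $\min_y G_y(\psi_k+\tau d_k)\geq\eps_0$ you revert to the Lipschitz bound $\abs{G_y(\psi_\tau)-G_y(\psi_k)}\leq L\tau\nr{d_k}$ together with $G_y(\psi_k)\geq\mu_y-\nr{r_k}\geq 2\eps_0-\nr{r_k}$. This yields only
\begin{equation*}
G_y(\psi_\tau) \geq 2\eps_0-\nr{r_k}\left(1+\frac{L\tau}{\kappa}\right),
\end{equation*}
which exceeds $\eps_0$ only when $\nr{r_k}<\eps_0$, i.e.\ only when the residual is already very small. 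At the start of the algorithm $\nr{r_k}$ can be of order $1$ while $\eps_0$ is tiny, so the argument breaks down entirely. Nor can you fix it by using the inductive bound $G_y(\psi_k)\geq\eps_0$ instead: the Lipschitz estimate then gives $G_y(\psi_\tau)\geq\eps_0-L\tau\nr{d_k}/\kappa$, which drops \emph{below} $\eps_0$ for any positive step and cannot certify the backtracking condition.

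The way out is exactly the Taylor expansion you already wrote down, applied componentwise. Because the Newton direction satisfies $\D G(\psi_k)d_k=-(G(\psi_k)-\mu)$, the first-order term is a \emph{convex combination} $(1-\tau)G_y(\psi_k)+\tau\mu_y$, which moves $G_y$ \emph{towards} $\mu_y\geq 2\eps_0$; using the inductive hypothesis $G_y(\psi_k)\geq\eps_0$ one gets
\begin{equation*}
G_y(\psi_\tau)\geq(1-\tau)\eps_0+\tau\cdot 2\eps_0-\nr{R(\tau)}=(1+\tau)\eps_0-\nr{R(\tau)},
\end{equation*}
which exceeds $\eps_0$ as soon as $\nr{R(\tau)}\leq\tau\eps_0$, a constraint of the same form as the residual-reduction one. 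This is the paper's route (and also where the Lipschitz bound is used, but only to establish a \emph{preliminary} stopping time $\tau_1$ up to which the curve stays in $\K^{\eps_0/2}$, so that the $\Class^{1,\alpha}$ Taylor estimate is applicable on the whole segment). The rest of your proposal --- the residual-reduction calculation, the comparison of constants yielding $\bar{\tau}_k$, the local rate at $\bar{\tau}_k=1$ --- is sound and matches the paper once this step is repaired.
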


%----
\begin{proof}
  We set $\eps := \eps_0$, $L := \max(L_{\eps/2},1)$ and $\kappa :=
  \min(\kappa_{\eps/2},1)$.  First, we remark that for every $\psi \in
  \K^{\frac{\eps}{2}}$, the pseudo-inverse $\D G^+(\psi)$ maps the
  subspace $E_Y$ to itself. The uniform monotonicity of $G$ therefore
  implies that $\nr{\D G^+(\psi)} \leq 1/\kappa$, where $\nr{\cdot}$
  is the operator norm on $\Rsp^Y$.

We start by the analysis of a single iteration of the algorithm. We
let $\psi := \psi_k \in \K^{\eps}$, define $v := \D G(\psi)^+ (G(\psi)
- \mu)$ and $\psi_\tau:=\psi - \tau v$. Since the pseudo-inverse $\D
G^+(\psi)$ is $1/\kappa$-Lipschitz, one has $\nr{v} \leq \nr{G(\psi) -
  \mu}/\kappa$. Now let $\tau_1$ be the largest time before the curve
$\psi_\sigma$ leaves $\K^{\eps/2}$.  In particular, $\psi_{\tau_1}$
lies at the boundary of $\K^{\eps/2}$, meaning that there must exist a
point $y$ in $Y$ such that $G_y(\psi_{\tau_1}) = \eps/2$. This implies
that $\nr{G(\psi_{\tau_1}) - G(\psi)} \geq \eps/2$, and using the
Lipschitz bound on $G$ we obtain a lower bound on $\tau_1$
\begin{align*}
\frac{\eps}{2} \leq \nr{G(\psi_{\tau_1}) -  G(\psi)} \leq L \tau_1  \nr{v} \leq 
 \frac{L\tau_1}{\kappa} \nr{G(\psi) - \mu}.
\end{align*}
This implies that $\tau_1$ is necessarily larger than $\kappa
\eps/(2 L\nr{G(\psi) - \mu})$.  We now established that the curve $\tau
\mapsto \psi_\tau$ remains in $\K^{\eps/2}$ before time $\tau_1$,
implying that the function $\tau \in [0,\tau_1] \mapsto G(\psi_\tau)$
is uniformly $\Class^{1,\alpha}$. Applying Taylor's formula we get
%% $$ G(\psi_\tau) - G(\psi) = - \int_{0}^\tau \D G(\psi_\sigma) \D
%%     G(\psi)^+ (G(\psi) - \mu) \dd \sigma$$
%%     $$ G(\psi_\tau) - (G(\psi) - \tau \D G(\psi) \D G(\psi)^+ (G(\psi) - \mu)) = -\int_{0}^\tau (\D G(\psi_\sigma) - \D G(\psi)) \D
%%     G(\psi)^+ (G(\psi) - \mu) \dd \sigma$$
%%     $$ G(\psi_\tau) - (1-\tau) G(\psi) - \tau \mu = $$
\begin{equation}
  \label{eq:DLreste}
  G(\psi_\tau) = G(\psi - \tau \D G(\psi)^+ (G(\psi)
  - \mu)) = (1-\tau) G(\psi) + \tau \mu + R(\tau)
\end{equation}
where, using $v = \D G(\psi)^+ (G(\psi) - \mu)$, and the $\alpha$-H\"older property for $\D G$
\begin{align}
  \nr{R(\tau)} &= \nr{\int_{0}^\tau (\D G(\psi_\sigma) - \D G(\psi)) v   \dd \sigma} \notag \\
  %&\leq \left[L (\sigma \nr{v})^{\alpha} \nr{v}\right]_{\sigma=0}^{\sigma=\tau} \\
  &\leq \frac{L}{\alpha+1} \tau^{\alpha+1} \nr{v}^{\alpha+1} \leq
  \frac{L \nr{G(\psi) - \mu}^{(1+\alpha)}}{\kappa^{(1+\alpha)}} \tau^{(1+\alpha)} \label{eq:Rtau}
\end{align}
%%$$\nr{R(\tau)} \leq L \sqrt{d} \nr{\tau \D G(\psi)^+ (G(\psi) -
%%\mu)) }^{(1+\alpha)} \leq \frac{L \sqrt{d} \nr{G(\psi) -
%%\mu}^{(1+\alpha)}}{\kappa^{(1+\alpha)}} \tau^{(1+\alpha)}.$$
For every $y\in Y$, using that $\mu_y \geq
2\eps$ (by  \eqref{eq:nonzerocells}) and $G_y(\psi) \geq \eps$, one gets
$$G_y(\psi_\tau) \geq (1-\tau) G_y(\psi) + \tau \mu_y + R_y(\tau) \geq
(1+\tau) \eps - \nr{R(\tau)}.$$ If $\tau$ is chosen such that such
that $\nr{R(\tau)} \leq \tau\eps$ we will have $G_y(\psi_\tau) \geq
\eps$ for all points $y$ in $Y$ and therefore $\psi_\tau$ will belong
to $\K^\eps$. Thanks to our estimate on $R(\tau)$ this will be true
provided that
\begin{equation*}
  \tau \leq \tau_2:=\min\left(\tau_1, \frac{\kappa^{1+\frac{1}{\alpha}}\varepsilon^{\frac{1}{\alpha}}}
       {L^{\frac{1}{\alpha}}\|G(\psi)-\mu\|^{1+\frac{1}{\alpha}}}\right).
\end{equation*}
Finally we establish the second inequality required by Step 2 of the
Algorithm. To do that, we subtract $\mu$ from both sides in 
\eqref{eq:DLreste} to obtain
\begin{equation}
  G(\psi_\tau) - \mu = (1-\tau) (G(\psi) - \mu) + R(\tau). \label{eq:Gmureste}
\end{equation}
In order to get $\nr{G(\psi_\tau) - \mu} \leq
(1-\frac{\tau}{2})\nr{G(\psi) - \mu}$, it is sufficient to establish
that $\nr{R(\tau)} \leq \frac{\tau}{2}\nr{G(\psi) - \mu}$. Using the
estimation on $\nr{R(\tau)}$ again, we see that it suffices to take
$$ \tau \leq
\tau_3:=\min\left(\tau_2,\frac{\kappa^{1+\frac{1}{\alpha}}}{L^{\frac{1}{\alpha}}\|G(\psi)-\mu\|2^{\frac{1}{\alpha}}},1\right).$$
Finally, using $L\geq 1$, $\kappa \leq 1$ and $\nr{G(\psi) - \mu} \leq
d$ (since $G(\psi)$ and $\mu$ are probability measures), we can
establish that $\tau_3 \geq \bar{\tau}_k$ where the value of
$\bar{\tau}_k$ is defined in \eqref{eq:taubark}. This ensures the
first estimate on the improvement of the error between two successive
steps.
%% following estimate on the improvement of the error between steps $k$
%% and $k+1$:
%%  Note that we have an extra factor $1/2$ because the time
%% steps chosen by the algorithm must be a power of two.
%% $$ \eps_{k+1} \leq (1 - C/\eps_k) \eps_k$$
%% $$ \eps_{k} \leq (1 - \bar{\tau}_0/4)^k \eps_0 $$
%% $$ \eps_k \leq \xi \Longleftrightarrow (1 - \bar{\tau}_0/4)^k \eps_0 \leq \xi $$
%% with $\xi = \kappa^{1+1/\alpha}\eps/L^{1/\alpha}$
%% $$ k \log(1-\bar{\tau}_0/4) \leq \log(\xi/\eps_0) $$
%% using $\log(1+x) \leq x$
%% $$ \Longleftarrow - k \bar{\tau}_0/4 \leq \log(\xi/\eps_0) $$

By this first estimate, we see that there exists $k_0$ such that
$\bar{\tau}_k = 1$ for $k\geq k_0$. When this happens, one can use
\eqref{eq:Gmureste} to get $\nr{G(\psi_{k+1}) - \mu} \leq
\nr{R(\tau)}$. We obtain the second estimate of the theorem by
plugging in \eqref{eq:Rtau}.% the upper bound on $\nr{R(\tau)}$ which we have proven
%above.
\end{proof}

%-----------------------------------------------
\subsection{Proof of Theorem \ref{th:main}}
Proposition \ref{prop:newton} can be directly applied to the gradient
of the Kantorovich functional, or more precisely to
$$G(\psi) := \sum_{y\in Y} \rho(\Lag_y(\psi)) \one_y = \nabla
\Phi(\psi) + \mu$$ In that case, the set $K^\eps$ is given by
$$ K^\eps = \{ \psi \in \Rsp^Y \mid \forall y\in Y,~
\rho(\Lag_y(\psi)) \geq \eps \}. $$ We have assumed that the
probability density $\rho$ is $\Class^{0,\alpha}(X)$ where $X$ is a
$c$-convex, compact subset of $\Omega$. Then, by
Theorem~\ref{th:StrongRegularity}, for any $\eps > 0$, the map $G$ is
uniformly $\Class^{1,\alpha}$ over $\K^\eps$. This ensures that the
(Regularity) condition of Proposition \ref{prop:newton} is
satisfied. Furthermore, since we also assumed that $\rho$ satisfies a
weighted Poincaré-Wirtinger inequality, we can apply Theorem
\ref{th:UniformConcavity} to see that the (Uniform monotonicity)
hypothesis of Proposition~\ref{prop:newton} is also
satisfied. Applying Proposition~\ref{prop:newton}, we deduce the
desired convergence rates for Algorithm~\ref{algo:newton}.

%--------------------------------------------------
%--------------------------------------------------
\subsection{Numerical results}
We conclude the article with a numerical illustration of this
algorithm, for the cost $c(x,y) = \nr{x - y}^2$ and for a
piecewise-linear density. The source density is piecewise-linear over
a triangulation of $[0,3]$ with $18$ triangles (displayed in
Figure~\ref{fig:numerical}). It takes value $1$ on the boundary
$\partial [0,3]^2$ and vanishes on the square $[1,2]^2$. In
particular, the support of this density is not simply connected and
not convex. The target measure is uniform over a uniform grid
$\frac{1}{n-1} \{0,\hdots,n-1\}^2$. Figure~\ref{fig:numerical}
displays the iterates of the Newton algorithm, which in this case
takes 25 iterations to solve the optimal transport problem with an
error equal to the numerical precision of the machine. The source code
of this algorithm is publicly
available\footnote{\url{https://github.com/mrgt/PyMongeAmpere}}.

We finally note that recent progress in computational geometry would
allow one to implement Algorithm~\ref{algo:newton} for the quadratic
cost on $\Rsp^3$, refining \cite{levy2014numerical} or
\cite{de2015power}. It should also be possible to deal with optimal
transport problems arising from geometric optics, such as the
far-field reflector problem \cite{de2014intersection}, whose
associated cost satisfies the Ma-Trudinger-Wang condition
\cite{loeper2011}.

\begin{figure}
  \begin{center} \label{fig:numerical}
    \includegraphics[width=.4\textwidth]{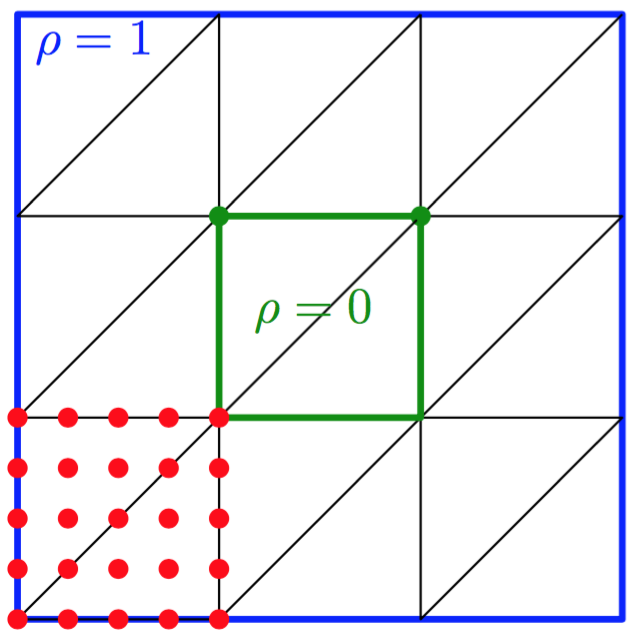}
    \\
  \begin{tabular}{ccc}
     \includegraphics[width=.3\textwidth]{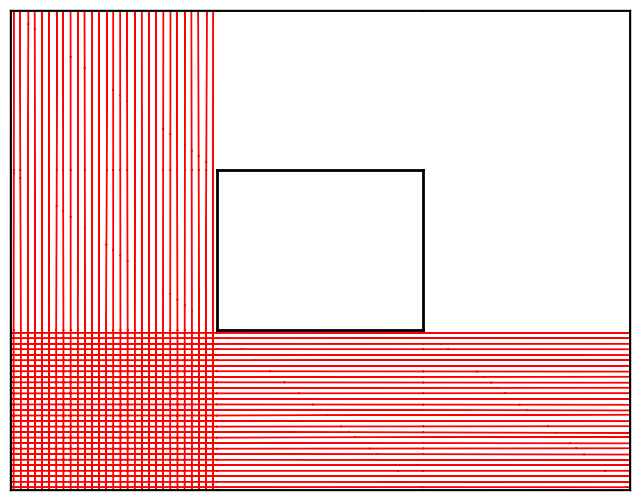} &
     \includegraphics[width=.3\textwidth]{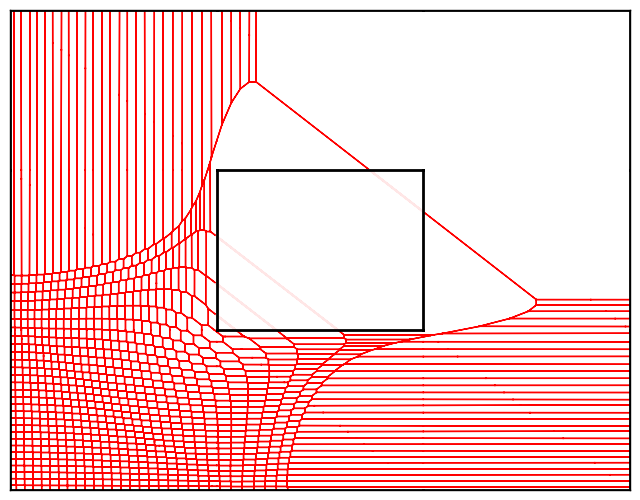} &
     \includegraphics[width=.3\textwidth]{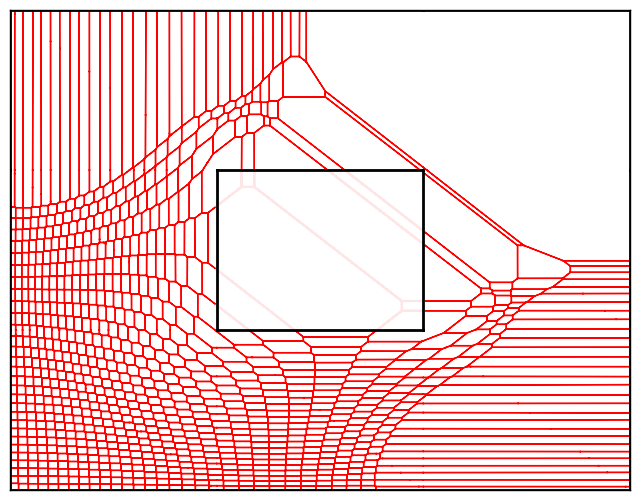} \\
     \includegraphics[width=.3\textwidth]{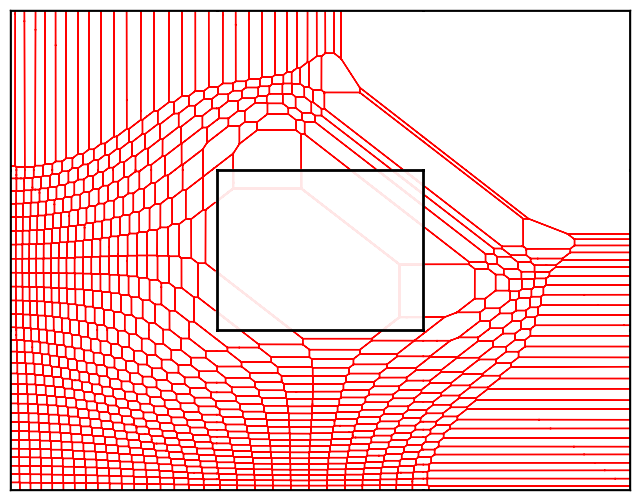} &
     \includegraphics[width=.3\textwidth]{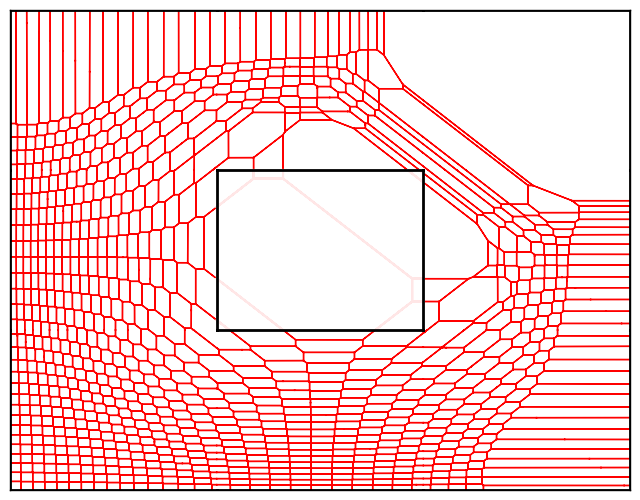} &
     \includegraphics[width=.3\textwidth]{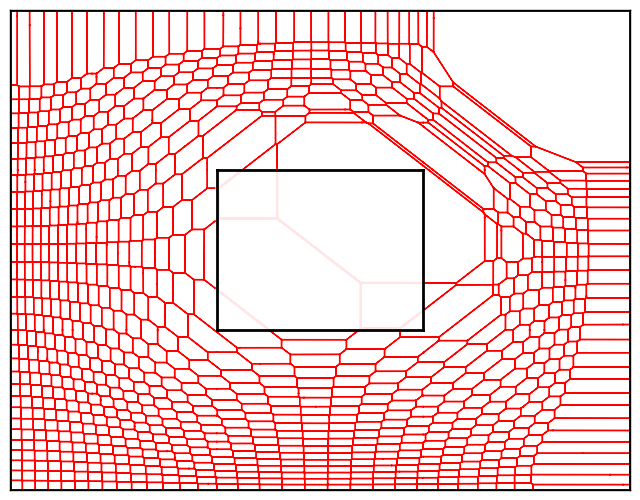} \\
     \includegraphics[width=.3\textwidth]{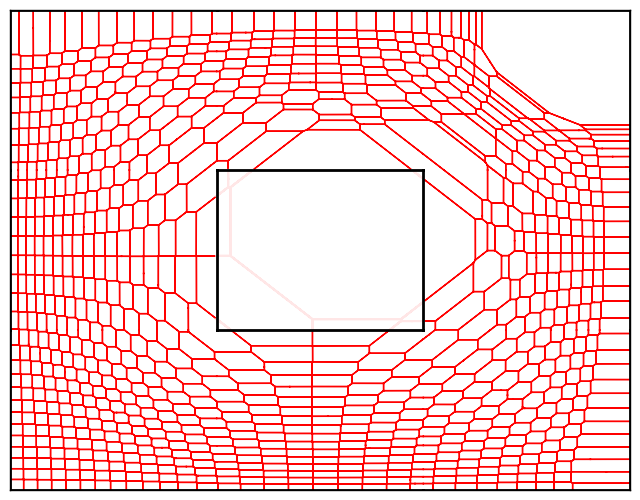} &
     \includegraphics[width=.3\textwidth]{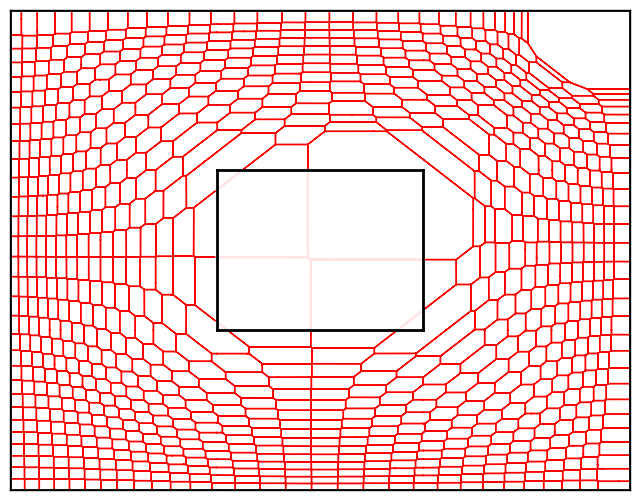} &
     \includegraphics[width=.3\textwidth]{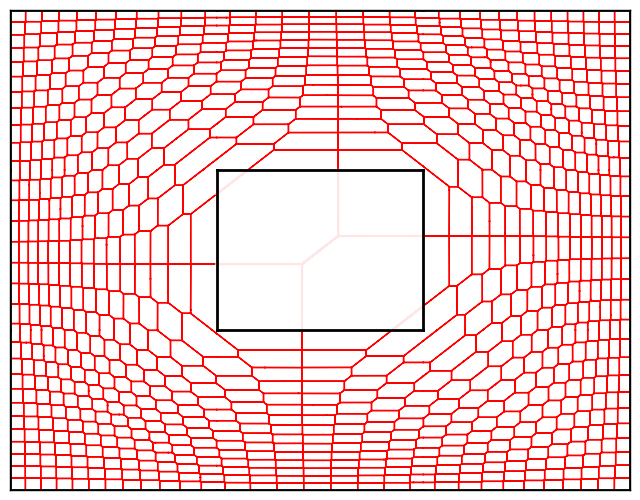} \\
  \end{tabular}
  \caption{Evolution of the Laguerre cells during the execution of the
    damped Newton algorithm for semi-discrete optimal transport. (Top)
    The source density $\rho$ is piecewise linear over the domain
    $X=[0,3]^3$ over the displayed triangulation: it takes value $1$
    on the boundary of the square $[0,3]^2$ and $0$ on the boundary of
    $[1,2]^2$. The target measure is uniform over a $30^2$ uniform
    grid in $[0,1]^2$. (Bottom) Laguerre cells at steps
    $k=0,2,6,9,12,15,18,21$ and $25$.}
  \end{center}
 \end{figure}

\appendix
\section{A weighted Poincar\'e-Wirtinger inequality}
\label{app:poincare}
In this section, we provide an (almost) explicit example of a
probability density on $\Rsp^d$ whose support is an annulus, therefore
not simply connected, but which still satisfies a weighted
Poincaré-Wirtinger inequality.
\begin{proposition}
Let $0 < r < R$ and assume that $\bar{\rho} \in \Class^0([0,R])$ is a
probability density with $\bar{\rho} = 0$ on $[0,r]$ and $\bar{\rho}$
concave on $[r,R]$. Consider
$$ \rho(x) = \frac{1}{\nr{x}^{d-1} \omega_{d-1}} \bar{\rho}(\nr{x})
\qquad \hbox{over} \qquad X := \B(0,R)\subseteq \Rsp^d,$$ where
$\omega_{d-1}$ is the volume of the unit sphere $\Sph^{d-1}$. Then,
$\rho$ satisfies the weighted Poincaré-Wirtinger inequality
\eqref{eq:L1-poincare} for some positive constant.
\end{proposition}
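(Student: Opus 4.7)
The plan is to leverage the radial symmetry of $\rho$ to reduce the $d$-dimensional weighted Poincaré-Wirtinger inequality to a one-dimensional problem on $[r,R]$ with weight $\bar\rho$, together with an angular Poincaré inequality on each sphere. In polar coordinates $x = t\omega$ with $t \in [0,R]$ and $\omega \in \Sph^{d-1}$, the definition of $\rho$ gives $\rho \dd \Haus^d = \bar\rho(t)\,\dd t \otimes \omega_{d-1}^{-1}\dd\sigma(\omega)$, so $\rho$ is a product measure in these coordinates. Let $\tilde f(t) := \omega_{d-1}^{-1}\int_{\Sph^{d-1}} f(t\omega)\,\dd\sigma(\omega)$ be the spherical average; Fubini gives $\Exp_\rho(f) = \Exp_{\bar\rho}(\tilde f)$. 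By the triangle inequality, it suffices to control separately the \emph{angular} deviation $\nr{f-\tilde f}_{L^1(\rho)}$ and the \emph{radial} deviation $\nr{\tilde f - \Exp_{\bar\rho}(\tilde f)}_{L^1(\rho)}$.

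For the angular deviation, the standard $L^1$ Poincaré-Wirtinger inequality on the unit sphere $\Sph^{d-1}$ (with constant $C_S$ depending only on $d$) rescales to the sphere of radius $t$ as
\begin{equation*}
\int_{\Sph^{d-1}} |f(t\omega) - \tilde f(t)|\,\dd\sigma(\omega) \leq C_S\,t \int_{\Sph^{d-1}} |\nabla_{\tan}f(t\omega)|\,\dd\sigma(\omega),
\end{equation*}
where $\nabla_{\tan}$ is the component of $\nabla f$ tangential to the sphere. Integrating this in $t$ against $\bar\rho(t)\dd t/\omega_{d-1}$ and using $t \leq R$ and $|\nabla_{\tan}f| \leq |\nabla f|$ yields $\nr{f-\tilde f}_{L^1(\rho)} \leq C_S R\,\nr{\nabla f}_{L^1(\rho)}$.

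For the radial deviation, $\tilde f$ is a $\Class^1$ function of $t$ with $|\tilde f'(t)| \leq \omega_{d-1}^{-1}\int_{\Sph^{d-1}}|\nabla f(t\omega)|\,\dd\sigma(\omega)$, so the result will follow from a one-dimensional weighted Poincaré-Wirtinger inequality for $\bar\rho$ on $[r,R]$; this in turn reduces via the coarea formula to a positive lower bound on the 1D Cheeger constant
\begin{equation*}
h(\bar\rho) := \inf_{t \in (r,R)} \frac{\bar\rho(t)}{\min\!\left(\int_r^t \bar\rho,\ \int_t^R \bar\rho\right)}.
\end{equation*}
This is the one place where the geometric hypothesis enters, and the step I expect to be the main obstacle. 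Continuity forces $\bar\rho(r) = 0$, and combined with concavity on $[r,R]$ this makes $\bar\rho$ unimodal: there exists $t_0 \in [r,R]$ with $\bar\rho$ non-decreasing on $[r,t_0]$ and non-increasing on $[t_0,R]$. For $t \leq t_0$ monotonicity gives $\int_r^t \bar\rho \leq (t-r)\bar\rho(t) \leq (R-r)\bar\rho(t)$; for $t \geq t_0$ the symmetric argument gives $\int_t^R \bar\rho \leq (R-r)\bar\rho(t)$. Either way $h(\bar\rho) \geq 1/(R-r)$, hence the 1D inequality holds with constant $2(R-r)$. Combining the two pieces through the triangle inequality yields \eqref{eq:L1-poincare} with $\Cpw = C_S R + 2(R-r)$.
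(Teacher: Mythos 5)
Your proof is correct and follows the same overall strategy as the paper: decompose the $L^1$ deviation of $f$ into an angular part (controlled by the Poincar\'e--Wirtinger inequality on $\Sph^{d-1}$, rescaled to radius $t$) and a radial part (controlled by a one-dimensional weighted Poincar\'e--Wirtinger inequality for $\bar\rho$ on $[r,R]$), and then add the two. The one place where your route genuinely diverges is the treatment of the radial piece. The paper invokes Theorem~2.1 of Acosta--Dur\'an \cite{acosta2004optimal} (an $L^1$ Poincar\'e inequality on a segment with a concave weight) as a black box. You instead derive the one-dimensional inequality from a lower bound on the weighted Cheeger constant $h(\bar\rho)$, using the fact that a concave function on an interval is unimodal, so that for $t$ on the increasing side $\int_r^t\bar\rho\le (R-r)\bar\rho(t)$ and symmetrically on the decreasing side. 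This yields $h(\bar\rho)\ge 1/(R-r)$ and hence $\Cpw^{\mathrm{1D}}=2(R-r)$ via the Maz'ya--Federer--Fleming (coarea/median) argument. Your version is more self-contained and produces a fully explicit constant, which the paper does not; in exchange, it quietly relies on two facts you state but do not prove: (a) the $L^1$ implication ``positive Cheeger constant $\Rightarrow$ Poincar\'e--Wirtinger with constant $2/h$,'' which is the converse direction to the one proved in the paper's Lemma~\ref{lem:PWtoCheeger} and requires the coarea/median argument you allude to; and (b) that the infimum defining $h(\bar\rho)$ may be taken over half-intervals $(r,t)$ only, which holds here because the weight is unimodal but would fail for a general weight. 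Both are standard in one dimension, so this is a matter of exposition rather than a gap. Two small remarks: the hypothesis $\bar\rho(r)=0$ is not actually needed for unimodality (concavity alone suffices), so the appeal to continuity is superfluous; and the paper's final $\sqrt 2$ factor comes from combining the tangential and radial gradient components optimally, an optimization you skip by bounding each by $\nr{\nabla f}$ separately, which is perfectly acceptable.
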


The proof relies on two $L^1$-Poincar\'e-Wirtinger inequalities. The
first inequality is the usual Poincar\'e-Wirtinger inequality on the
sphere: given a $\Class^1$ function $f$ on $\Sph^{d-1}$, and $F_{d-1}
:= (1/\omega_{d-1}) \int_{\Sph^{d-1}} f(z) \dd z$,
  \begin{equation}
    \int_{\Sph^{d-1}} \abs{f(z) - F_{d-1}} \dd \Haus^{d-1}(z)
    \leq c_d\int_{\Sph^{d-1}} \nr{\nabla f(z)} \dd\Haus^{d-1}(z)
    \label{eq:poincare-sphere}
  \end{equation}
for some positive constant $c_d$. The second inequality is a
Poincaré-Wirtinger inequality on the segment $[0,R]$ weighted by
$\bar{\rho}$. Given a function in $\Class^1([0,R])$, and letting $F_1
:= \int_{0}^{R} f(r) \bar{\rho}(r)\dd r / \int_{0}^{R}
\bar{\rho}(r)\dd r$,
  \begin{equation}
    \int_0^R \abs{\bar{f}(r) - F_1} \bar{\rho}(r) \dd r \leq
    c_{\bar{\rho}} \int_{0}^{R} \abs{\bar{f}'(r)} \bar{\rho}(r) \dd r
        \label{eq:poincare-segment}
  \end{equation}
  for some positive constant $c_{\bar{\rho}}$ depending only on
  $\bar{\rho}$. The inequality \eqref{eq:poincare-segment} can be
  deduced from Theorem~2.1 in \cite{acosta2004optimal} and from the
  concavity of $\bar{\rho}$ on $[r,R]$.
    
\begin{proof}  
We now proceed to the proof of the Poincaré-Wirtinger inequality for
$(X,\rho)$.  Let $f:\B(0,R)\to\Rsp$ be a function of class
$\Class^1$. By polar coordinates and the definition of $\rho$, one
has
\begin{align*}
F&:=\int_{\B(0,R)} f(x) \rho(x) \dd \Haus^d(x) \\
&= \int_0^R 
\frac{1}{\omega_{d-1} r^{d-1}}\int_{\Sph^{d-1}(r)} f(z) \bar{\rho}(r)
\dd \Haus^{d-1}(z) \dd r 
= \int_{0}^R \bar{f}(r) \bar{\rho}(r) \dd r ,
\end{align*}
where the function $\bar{f}(r)$ is the mean value of $f$ over the
sphere $\Sph^{d-1}(r)$,
$$ 
\bar{f}(r) = \frac{1}{\omega_{d-1}r^{d-1}} \int_{\Sph^{d-1}(r)}
f(z)  \dd\Haus^{d-1}(z) = \frac{1}{\omega_{d-1}} \int_{\Sph^{d-1}} f(rz) \dd\Haus^{d-1}(z).
$$
Using the triangle inequality and the relation between $\bar{\rho}$ and $\rho$ we get
\begin{align}
 & \int_{\B(0,R)} \abs{f(x) - F} \rho(x) \dd \Haus^d(x)  
 = \int_0^R \int_{\Sph^{d-1}(r)} \abs{f(z) - F} \rho(z)
  \dd\Haus^{d-1}(z) \dd r \notag \\
 & \quad \leq \int_0^R \bar{\rho}(r)\abs{\bar{f}(r) - F} \dd r  + \int_0^R \frac{ \bar{\rho}(r)}{r^{d-1} \omega_{d-1}}
  \int_{\Sph^{d-1}(r)}\abs{f(z) - \bar{f}(r)}  \dd \Haus^{d-1}(z) \dd r \label{eq:poincare:split}
\end{align}
We first deal with the second term of the sum. Using the
Poincar\'e-Wirtinger inequality on the sphere
\eqref{eq:poincare-sphere}, we have
$$
\int_{\Sph^{d-1}(r)}\abs{f(z) -
    \bar{f}(r)} \dd \Haus^{d-1}(z)
  \leq c_d \int_{\Sph^{d-1}(r)} \nr{\restr{\nabla f(z)}_{z^\perp}} \dd\Haus^{d-1}(z),
  $$ where $\restr{\nabla f(z)}_{z^\perp}$ is the orthogonal
  projection of the gradient on the tangent plane $\{z\}^\perp$, so
  that
\begin{equation}
 \int_0^R \frac{\bar{\rho}(r)}{r^{d-1} \omega_{d-1}}
\int_{\Sph^{d-1}(r)}\abs{f(z) - \bar{f}(r)} \dd \Haus^{d-1}(z) \leq c_d
\int_{B(0,r)} \nr{\restr{\nabla f(x)}_{x^\perp}} \rho(x)\dd \Haus^d(x).
       \label{eq:partial1}
\end{equation}
By the calculation of $F$ above, we see $F$ is also the mean value of
$\bar{f}$ weighted by $\bar{\rho}$. We can therefore control the first
term of the upper bound of \eqref{eq:poincare:split} using the
Poincar\'e-Wirtinger inequality on the segment
\eqref{eq:poincare-segment}:
$$\int_0^R \bar{\rho}(r) \abs{\bar{f}(r) - F} \dd r \leq
c_{\bar{\rho}} \int_{0}^{R} \abs{\bar{f}'(r)}   \bar{\rho}(r) \dd r.
$$
Now, notice that:
$$ \bar{f}'(r) = \lim_{h\to 0} \frac{\bar{f}(r+h) - \bar{f}(r)}{h} = \lim_{h\to 0}
\frac{1}{\omega_{d-1}} \int_{\Sph^{d-1}} \frac{f((r+h) z) - f(rz)}{h} \dd \Haus^{d-1}(z), $$
from which we deduce
$$ \abs{\bar{f}'(r)} \leq \frac{1}{\omega_{d-1}} \int_{\Sph^{d-1}} \abs{\frac{\partial f}{\partial r}(rz)} \dd \Haus^{d-1}(z) = \frac{1}{\omega_{d-1} r^{d-1}} \int_{\Sph^{d-1}(r)} \abs{\sca{\nabla f(z)}{\frac{z}{r}}} \dd \Haus^{d-1}(z)$$
Integrating this inequality shows that
\begin{align}
 \int_0^R \bar{\rho}(r) \abs{\bar{f}(r) - F} \dd r &\leq c_{\bar{\rho}} \int_0^R \frac{\bar{\rho}(r)}
     {\omega_{d-1} r^{d-1}} \int_{\Sph^{d-1}(r)} \abs{\sca{\nabla f(z)}{\frac{z}{r}}} \dd \Haus^{d-1}(z) \notag \\
     &= c_{\bar{\rho}}\int_{\B(0,R)} \abs{\sca{\nabla f(x)}{\frac{x}{\nr{x}}}} \rho(x) \dd \Haus^d(x).
     \label{eq:partial2}
\end{align}
From the simple inequality $(a+b)^2  \leq 2(a^2 + b^2)$, we get
$$\abs{\sca{\nabla f(x)}{\frac{x}{\nr{x}}}} + \nr{\restr{\nabla f(x)}_{x^\perp}} \leq \sqrt{2}
\nr{\nabla f(x)}. $$
Using the bounds \eqref{eq:partial1} and \eqref{eq:partial2} in Equation \eqref{eq:poincare:split}, we get the desired inequality:
\begin{equation*}
  \int_{\B(0,R)} \abs{f(x) - F} \rho(x) \dd \Haus^d(x) \leq \sqrt{2} (c_d + c_{\bar{\rho}}) \int_{\B(0,R)} \nr{\nabla f(x)} \rho(x) \dd \Haus^d(x).\qedhere
  \end{equation*}

\end{proof}

\section{Proof of Theorem~\ref{th:LocReg}}
\label{app:proof-locreg}

\subsection{Existence of partial derivatives}
\label{sec:existence-PD}
Without loss of generality, we assume that $\bm{\lambda}_0 = \bm{0}$.
We start the proof of Theorem~\ref{th:LocReg} by showing the existence
of partial derivatives for the map $\Gloc$. In this section, we denote
$e_1,\hdots,e_N$ the canonical basis of $\Rsp^N$. We start by
rewriting the finite difference defining the partial derivative of
$\Gloc$ in direction $e_i$ using the coarea formula. Fix $\nr{\bm{\lambda}}<T_{tr}$. For $t>0$, one
has:
\begin{align}
  \frac{1}{t} (\Gloc(\bm{\lambda} + t\bm{e}_i) - \Gloc(\bm{\lambda})) &=
  \frac{1}{t}  \int_{K(\bm{\lambda} + t \bm{e}_i) \setminus K(\bm{\lambda})} \rholoc(x)
  \dd \Haus^d(x) \notag \\
  &=  \frac{1}{t}  \int_{\lambda_i}^{\lambda_{i}+t}
  \gloc(s)
  \d s,
  %&=  \frac{1}{t}  \int_{\lambda_i}^{\lambda_{i}+t}
%  \gloc(\lambda_1,\hdots,\lambda_{i-1},s,\lambda_{i+1},\hdots,\lambda_N)
%  \d s,
\label{eq:G:fd}
\end{align}
where the function $\gloc$ is defined by %integrating $\rholoc(x)/\nr{\nabla f_i(x)}$ over the $i$th face of $K(\bm{\lambda})$, that is
\begin{equation}
\gloc(s) := \int_{\cap_{j\neq i}K_j(\lambda_j) \cap f_i^{-1}(s)}
\frac{\rholoc(x)}{\nr{\nabla f_i(x)}} \dd \Haus^{d-1}(x).
\label{eq:G:pd}
\end{equation}
% and where the convex set $L$ and the hypersurface $S_s$ are defined by
% $$L = \bigcap_{j\neq i} K_j(\lambda_j) \qquad\hbox{ and }\qquad S_s =
% f_{i}^{-1}(s). $$
The same reasoning also holds for $t<0$. We now claim that $\gloc$ is
continuous on some interval
around $\lambda_i$, which by \eqref{eq:G:fd} and the Fundamental
Theorem of Calculus will imply that the limit as $t\to 0$ in
\eqref{eq:G:fd} exists and is equal to $\gloc(\bm{\lambda})$, thus
establishing the formula \eqref{eq:PD}.  The continuity of $\gloc$ follows from the next proposition, which
is formulated in a slightly more general way.

\begin{proposition}\label{prop:PDUC}
  Let $\sigma$ be a continuous non-negative function on $\Xloc$ and let
  $\omega$ be the modulus of continuity of $\sigma$. Given any vector
  $\bm{\lambda}$ in $\Rsp^N$ with $\nr{\bm{\lambda}}_{\infty} \leq T_{tr}$,
  consider the function
  \begin{align*} h: s\in \Rsp\mapsto \int_{L\cap S_s} \sigma(x) \dd \Haus^{d-1}(x), 
\end{align*} %    \hbox{with } L = \bigcap_{j\neq i} K_j(\lambda_j) \hbox{ and } S_s =
    % f_{i}^{-1}(s)
where $L := \bigcap_{j\neq i} K_j(\lambda_j)$ and $S_s :=
f_{i}^{-1}(s)$. Then $h$ is uniformly continuous on
$[-T_{tr},T_{tr}]$ and has modulus of continuity
\begin{equation}
  \omega_h(\delta) = 
  C_1\cdot(\omega(C_2\delta) + \abs{\delta}),
  \label{eq:PDUC}
  \end{equation} where the constants
only depend on $\nr{f_i}_{\Class^{1,1}}$, $\diam(\Xloc)$, $\eps_{nd}$,
$\eps_{tr}$ and $\nr{\sigma}_\infty$.
\end{proposition}

Taking $\sigma = \rholoc/\nr{\nabla f_i}$ in the previous proposition,
which is continuous using the non-degeneracy condition \eqref{eq:ND}
and the assumption $f_i\in \Class^{1,1}(\Xloc)$, we see that the function
$\gloc$ defined by \eqref{eq:G:pd} is continuous. This implies the
existence of partial derivatives and establishes formula
\eqref{eq:PD}.  The proof of Proposition~\ref{prop:PDUC} requires the following lemma.

\begin{lemma} \label{lemma:Phi} Assume that the functions $f_i: \Xloc \to \Rsp$
  satisfy \eqref{eq:ND}. Then, for every $i \in \{1,\hdots,N\}, $
  there exists a map $\Phi_i: \Xloc \times \Rsp \to \Rsp^d$ such that:
\begin{enumerate}[(i)]
\item For any $(x,t)$ in $\Xloc \times \Rsp$ such that the curve $\Phi_i(x,
  [0,t])$ remains in $\Xloc$, one has $f_i(\Phi_i(x, t)) = f_i(x)+t$.
\item The map $\Phi_i$ satisfies the following inequalities for every $x,y\in \Xloc$, $t\in\Rsp$:
\begin{align}
  & \nr{\Phi_i(x,t) - \Phi_i(x, s)} \leq \frac{\abs{t-s}}{\eps_{nd}}  \label{ineq:LipTime}\\
  & \nr{\Phi_i(x,t) - \Phi_i(y,t)} \leq \exp(C_\Phi\abs{t}) \nr{x - y},
\label{ineq:Gronwall}
\end{align}
where $C_\Phi := 3C_L/\eps_{nd}^2$.
\end{enumerate}
\end{lemma}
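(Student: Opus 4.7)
The natural candidate is the flow along a suitably normalized gradient vector field. Specifically, I would define
\begin{equation*}
V_i(x) := \frac{\nabla f_i(x)}{\nr{\nabla f_i(x)}^2},
\end{equation*}
which is well-defined on $\Xloc$ by the non-degeneracy assumption \eqref{eq:ND}. Extending $V_i$ to a Lipschitz vector field on all of $\Rsp^d$ (by Kirszbraun's extension, say), one can then let $\Phi_i(x,\cdot)$ denote the integral curve of $V_i$ starting at $x$; this is globally defined in time by Cauchy--Lipschitz. Property (i) then follows immediately: as long as $\Phi_i(x,[0,t])$ stays in $\Xloc$ where $V_i$ coincides with $\nabla f_i/\nr{\nabla f_i}^2$, one has $\tfrac{d}{ds}f_i(\Phi_i(x,s)) = \sca{\nabla f_i(\Phi_i(x,s))}{V_i(\Phi_i(x,s))} = 1$, so integrating gives $f_i(\Phi_i(x,t)) = f_i(x) + t$.

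For the first inequality in (ii), I would use the ODE itself: since $\nr{V_i(z)} = 1/\nr{\nabla f_i(z)} \leq 1/\eps_{nd}$ on $\Xloc$ by \eqref{eq:ND}, one immediately obtains $\nr{\Phi_i(x,t) - \Phi_i(x,s)} \leq \int_s^t \nr{V_i(\Phi_i(x,\tau))}\d\tau \leq \abs{t-s}/\eps_{nd}$. The second (Gr\"onwall) inequality is the key quantitative content: it reduces to showing that $V_i$ is $C_\Phi$-Lipschitz on $\Xloc$ with $C_\Phi = 3C_L/\eps_{nd}^2$, after which the standard Gr\"onwall estimate on the flow of a Lipschitz vector field yields $\nr{\Phi_i(x,t) - \Phi_i(y,t)} \leq \exp(C_\Phi \abs{t})\nr{x-y}$.

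To establish the Lipschitz bound on $V_i$, I would use that $f_i \in \Class^{1,1}(\Xloc)$ means $\nabla f_i$ is $C_L$-Lipschitz and hence differentiable $\Haus^d$-a.e. by Rademacher, with $\nr{D^2 f_i}_{\mathrm{op}} \leq C_L$ a.e. A direct computation of the a.e. differential gives
\begin{equation*}
DV_i = \frac{D^2 f_i}{\nr{\nabla f_i}^2} - \frac{2\,\nabla f_i \otimes (D^2 f_i\, \nabla f_i)}{\nr{\nabla f_i}^4},
\end{equation*}
so $\nr{DV_i}_{\mathrm{op}} \leq \nr{D^2 f_i}_{\mathrm{op}}/\nr{\nabla f_i}^2 + 2\nr{D^2 f_i}_{\mathrm{op}}/\nr{\nabla f_i}^2 \leq 3 C_L/\eps_{nd}^2 = C_\Phi$ a.e. Since $V_i$ is continuous and has an a.e.\ differential bounded by $C_\Phi$ (on the convex domain $\Xloc$), it is globally $C_\Phi$-Lipschitz.

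The main subtlety I anticipate is mostly bookkeeping: one must extend $V_i$ outside $\Xloc$ while preserving the Lipschitz constant (Kirszbraun handles this cleanly), and one must be careful that property (i) only asserts the identity $f_i(\Phi_i(x,t)) = f_i(x)+t$ on the time interval during which the curve remains in $\Xloc$, since outside $\Xloc$ the extended $V_i$ is no longer tied to $\nabla f_i$. Neither causes real trouble, and no use of transversality \eqref{eq:T} is needed at this stage --- only \eqref{eq:ND} and the $\Class^{1,1}$ regularity of $f_i$ enter.
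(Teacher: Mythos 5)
Your proposal follows essentially the same plan as the paper: integrate the normalized gradient field $V_i = \nabla f_i / \nr{\nabla f_i}^2$, extend it to all of $\Rsp^d$, and derive \eqref{ineq:LipTime} from the pointwise bound $\nr{V_i}\leq 1/\eps_{nd}$ and \eqref{ineq:Gronwall} from the Lipschitz bound plus Gr\"onwall. Your Rademacher-based computation of the Lipschitz constant $C_\Phi = 3C_L/\eps_{nd}^2$ is correct; the paper obtains the same bound by an elementary difference estimate.

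There is, however, one genuine (if small) gap in the extension step. The inequalities in (ii) are asserted for all $t\in\Rsp$, so the integral curve $\Phi_i(x,\cdot)$ may leave $\Xloc$; indeed, in the proof of Proposition~\ref{prop:PDUC} the flow is applied from $P(y)$ for a time $u(x)$ that is not the exit time of $P(y)$, so one really does need the estimates to persist outside $\Xloc$. For \eqref{ineq:LipTime} you therefore need $\nr{V_i}\leq 1/\eps_{nd}$ to hold for the \emph{extended} field on all of $\Rsp^d$, but Kirszbraun's theorem preserves the Lipschitz constant and not the $L^\infty$ bound, so the chain $\nr{\Phi_i(x,t)-\Phi_i(x,s)}\leq\int_s^t\nr{V_i(\Phi_i(x,\tau))}\,\d\tau\leq\abs{t-s}/\eps_{nd}$ does not go through as written. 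The paper avoids this by taking the explicit extension $V_i := V^0_i\circ p_{\Xloc}$, where $p_{\Xloc}$ is the nearest-point projection onto the convex set $\Xloc$; since $p_{\Xloc}$ is $1$-Lipschitz, this simultaneously preserves the Lipschitz constant \emph{and} the range of $V^0_i$. You could also repair your version by post-composing the Kirszbraun extension with the $1$-Lipschitz projection onto the closed ball $\B(0,1/\eps_{nd})$, which leaves the values on $\Xloc$ unchanged.
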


\begin{proof} We consider the vector field $V_i^0(x) = \nabla
  f_i(x)/\nr{\nabla f_i(x)}^2$ on $\Xloc$, which satisfies
  $\nr{V_i^0}_{\infty} \leq 1/\eps_{nd}$ and whose Lipschitz constant
  is bounded by $C_\Phi$.
%  \begin{comment}
%  %%%%%% DETAILS
%  \begin{align*}
%    V_i(x) - V_i(y)
%    &= \frac{\nabla f_i(x)}{\nr{\nabla f_i(x)}^2} - \frac{\nabla f_i(y)}{\nr{\nabla f_i(y)}^2}\\
%    &= \frac{\nabla f_i(x) - \nabla f_i(y)}{\nr{\nabla f_i(x)}^2} + \nabla f_i(y)
%    \left(\frac{1}{\nr{\nabla f_i(x)}^2} - \frac{1}{\nr{\nabla f_i(y)}^2}\right)\\
%        %% &= \frac{\nabla f_i(x) - \nabla f_i(y)}{\nr{\nabla f_i(x)}^2} + \frac{\nabla f_i(y)}{\nr{\nabla f_i(y)}^2}
%    %%\left(\frac{\nr{\nabla f_i(y)}^2}{\nr{\nabla f_i(x)}^2} - 1\right)\\
%        &= \frac{\nabla f_i(x) - \nabla f_i(y)}{\nr{\nabla f_i(x)}^2} + \nabla f_i(y)
%\frac{(\nr{\nabla f_i(x)} - \nr{\nabla f_i(y))(\nr{\nabla f_i(x)}+\nr{\nabla f_i(y))}}}
%     {\nr{\nabla f_i(x)}^2\nr{\nabla f_i(y)}^2}\\
%     &= \frac{\nabla f_i(x) - \nabla f_i(y)}{\nr{\nabla f_i(x)}^2} + \frac{\nabla f_i(y)}{\nr{\nabla f_i(y)}}
%     (\nr{\nabla f_i(x)} - \nr{\nabla f_i(y)})\frac{\nr{\nabla f_i(x)}+\nr{\nabla f_i(y)}}
%         {\nr{\nabla f_i(x)}^2\nr{\nabla f_i(y)}}
%  \end{align*}
%  so that
%  $$ \frac{\nr{V_i(x) - V_i(y)}}{\nr{x - y}} \leq \frac{C_L}{\eps_{nd}} + 2 C_L/\eps_{nd}^2 \leq
%  3\frac{C_L}{\eps_{nd}^2}$$
%    where $\eps_{nd}$ is given by the
%    nondegeneracy condition \eqref{eq:ND}.
%    \end{comment}
  This vector field is extended
  to $\Rsp^d$ using the orthogonal projection on $\Xloc$, denoted $p_{\Xloc}$,
  $$ \forall x\in \Rsp^d,~ V_i(x) := V^0_i(p_{\Xloc}(x)). $$ By
  convexity of $\Xloc$, the map $p_{\Xloc}$ is $1$-Lipschitz. This
  implies that the Lipschitz constant of $V_i$ is also bounded by
  $C_\Phi$.  We let $\Phi_i$ be the flow induced by this vector
  field, which exists and is for all time since $V_i$ is bounded and uniformly Lipschitz on all of $\Rsp^d$. The inequality \eqref{ineq:LipTime} follows from the
  definition of integral curves and the bound on $\nr{V_i}$. Any
  integral curve $\gamma: [0,T] \to \Rsp^d$ of $V_i$ which remains in
  $\Xloc$ satisfies
\begin{align*}
  f_i(\gamma(t)) &= f_i(\gamma(0)) + \int_{0}^t \sca{\gamma'(s)}{\nabla f_i(\gamma(s))} \dd s \\
  &= f_i(\gamma(0)) + \int_{0}^t \sca{V_i(\gamma(s))}{\nabla f_i(\gamma(s))} \dd s 
  = f_i(\gamma(0)) + t,
\end{align*}
thus establishing (i). The inequality \eqref{ineq:Gronwall} follows from the bound on
the Lipschitz constant of $V_i$ and from Gronwall's lemma.
\end{proof}

\begin{figure}
\includegraphics{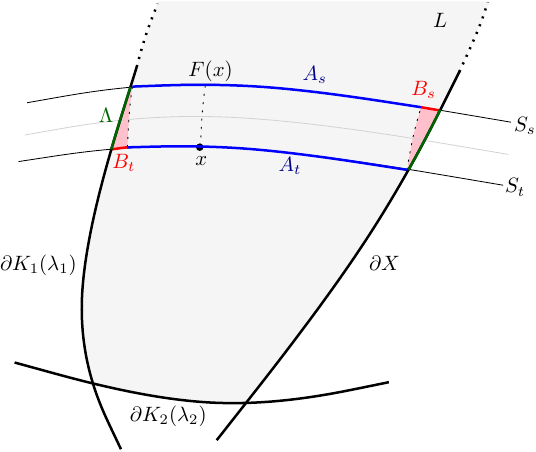}
\caption{Illustration of the proof of Proposition~\ref{prop:PDUC}.\label{fig:CoareaPartialDerivatives}}
\end{figure}

\begin{proof}[Proof of Proposition~\ref{prop:PDUC}]
  Let $t,s$ be small enough so that the transversality condition
  \eqref{eq:T} holds (that is $t,s \in [-T_{tr},T_{tr}]$). We assume
  that $t<s$ so as to fix the signs of some expressions. We consider
  the following partition of the facet $S_t \cap L$, whose geometric
  meaning is illustrated in Figure~\ref{fig:CoareaPartialDerivatives}:
  \begin{align*}
    &A_t = \{ x \in S_t \cap L \mid \Phi_i(x, [0, s-t]) \subseteq L\} \\
%    &B_t = \{ x \in S_t \cap L \mid \Phi_i(x, [0, s-t]) \subseteq \Xloc \hbox{ and } \Phi(x, s-t) \not\in  L\}\\
    &B_t = \{ x \in S_t \cap L \mid \exists u \in [0, s-t),~ \Phi_i(x, u) \in \partial L \}.
  \end{align*}
  Similarly, we define
  \begin{align*}
    &A_s = \{ x \in S_s \cap L \mid \Phi_i(x, [t-s, 0]) \subseteq L\} \\
    %&B_s = \{ x \in S_s \cap L \mid \Phi_i(x, [t-s, 0]) \subseteq \Xloc \hbox{ and } \Phi(x, t-s) \not\in L\}\\
    &B_s = \{ x \in S_s \cap L \mid \exists u \in (t-s, 0],~ \Phi_i(x, u) \in \partial L \}.
  \end{align*}
 Recall that by definition,
 \begin{equation}
   h(t) = \int_{A_t} \sigma(x)\dd\Haus^{d-1}(x) + \int_{B_t}
   \sigma(x)\dd\Haus^{d-1}(x), \label{eq:hsplit}
 \end{equation}
 where the integral is with respect to the $(d-1)$-dimensional
 Hausdorff measure. Our strategy to show the continuity of $h$ is to
 prove that the first term in the sums defining $h(t)$ and $h(s)$ in
 \eqref{eq:hsplit} are close, namely
  \begin{equation}
    \abs{\int_{A_t} \sigma(x)\dd\Haus^{d-1}(x) - \int_{A_s} \sigma(x)\dd\Haus^{d-1}(x)} \leq C_3 \cdot (\abs{s-t}+\omega(C \abs{s-t})) \label{eq:A}
  \end{equation}
  and then that the terms involving $B_t,B_s$ are small (recall
  that both sets depend on $t$ \emph{and} $s$):
  \begin{equation}
    \int_{B_t} \abs{\sigma(x)\dd\Haus^{d-1}(x)} + 
    \int_{B_s} \abs{\sigma(x)\dd\Haus^{d-1}(x)} \leq  C_4 \cdot \abs{s-t} \label{eq:B}
  \end{equation}
  The combination of the estimates \eqref{eq:A} and \eqref{eq:B}
  implies the desired inequality \eqref{eq:PDUC}. We now turn to the
  proof of these estimates, and that the constants $C_3$ and $C_4$ in
  these estimates depend on $\nr{f_i}_{\Class^{1,1}}$, $\diam(\Xloc)$,
  $\eps_{nd}$, $\eps_{tr}$ and $\nr{\sigma}_\infty$.
  \smallskip
  
  \noindent\textbf{Proof of Estimate \eqref{eq:A}.}  By
  Lemma~\ref{lemma:Phi}.(i), for any point $x$ in $A_t$ one has
  $f_i(\Phi_i(x, s-t)) = s$, so that the map $F(x) := \Phi_i(x, s-t)$
  induces a bijection between the sets $A_t$ and $A_s$.  As a
  consequence of \eqref{ineq:Gronwall}, the restriction of $F$ to
  $A_t$ is a bi-Lipschitz bijection between the sets $A_t$ and $A_s$,
  with Lipschitz constant
  $$ \max\{\nr{F^{-1}}_{\Lip(A_s)},\ \nr{F}_{\Lip(A_t)}\} \leq \exp(C_\Phi \abs{s-t}).$$
  Using a Lipschitz change of variable formula, we get
\begin{align}
  \int_{A_t} \sigma(x) \dd \Haus^{d-1}(x)
  &= \int_{F^{-1}(A_s)} \sigma(x)  \dd \Haus^{d-1}(x)  \notag \\
  &\leq \nr{F^{-1}}_{\Lip(A_s)}^{d-1} \int_{A_s} \sigma(F^{-1}(x)) \dd \Haus^{d-1}(x)  \notag \\
  &\leq \exp(C_\Phi(d-1) \abs{s-t}) \int_{A_s} \sigma(F^{-1}(x)) \dd \Haus^{d-1}(x). \label{eq:At1}
\end{align}
By definition of the modulus of continuity and thanks to \eqref{ineq:LipTime},
\begin{align*}
 \abs{\sigma(F^{-1}(x)) -\sigma(x)} &\leq \omega(\nr{F^{-1}(x) - x}) \\
&= \omega(\nr{\Phi(x,s-t) - x}) \leq \omega(\abs{s-t}/\eps_{nd}) 
\end{align*}
Integrating this inequality, we get
\begin{align}
  \int_{A_s} \sigma(F^{-1}(x)) \dd \Haus^{d-1}(x) &\leq \int_{A_s} \sigma(x) \dd \Haus^{d-1}(x) + \Haus^{d-1}(A_s) \omega(\abs{s-t}/\eps_{nd}) \notag \\
  &\leq  \int_{A_s} \sigma(x) \dd \Haus^{d-1}(x) + \Haus^{d-1}(\Xloc) \omega(\abs{s-t}/\eps_{nd}) \label{eq:At2}
  \end{align}
where the second inequality uses the monotonicity of the
$(d-1)$-dimensional Hausdorff measure of the boundary of a convex set
with respect to inclusion, see \cite[p.211]{schneider}. Combining
\eqref{eq:At1} and \eqref{eq:At2} we get
$$   \int_{A_t} \sigma(x) \dd \Haus^{d-1}(x) \leq  \exp(C_\Phi(d-1) \abs{s-t}) \left(\int_{A_s} \sigma(x) \dd \Haus^{d-1}(x) + \Haus^{d-1}(\Xloc) \omega(\abs{s-t}/\eps_{nd})\right) $$
so that
\begin{align*}
  \int_{A_t} \sigma(x) \dd \Haus^{d-1}(x) - \int_{A_s} \sigma(x) \dd \Haus^{d-1}(x) &\leq  (\exp(C_\Phi(d-1) \abs{s-t})-1) \nr{\sigma}_\infty \Haus^{d-1}(\Xloc) +\\
  &\qquad \exp(C_\Phi(d-1) \abs{s-t}) \Haus^{d-1}(\Xloc) \omega(\abs{s-t}/\eps_{nd}) \\
  &\leq C_3 \cdot (\abs{s-t} + \omega(\abs{s-t}/\eps_{nd})),
\end{align*}
where the constant $C_3$ depends on $C_L$,  $\eps_{nd}$, $\eps_{tr}$,
$\nr{\sigma}_\infty$ and $\diam(\Xloc)$.  Exchanging the role of $s$
and $t$ completes the proof of \eqref{eq:A}.
\smallskip

\noindent\textbf{Proof of \eqref{eq:B}.}  By definition, for every
point $x$ in the set $B_t$, the curve $\Phi_i(x,[0,s-t])$ must cross the
boundary of $L$ at some point, so that
$$ u(x) := \min \{ v \in [0,s-t] \mid \Phi_i(x,v) \in \partial L \} $$
is well defined.  We write $P(x) := \Phi_i(x,u(x))$ for the corresponding
point on the boundary of $L$. By definition of $u(x)$, the curve
$\Phi(x, [0,u(x)])$ is included in $L$, so that by
Lemma~\ref{lemma:Phi}.(i) we have $f_i(P(x)) = t + u(x)$. This shows
\begin{align}\label{eq:boundaryinclusion}
P(B_t) \subseteq \Lambda := \partial L \cap f_{i}^{-1}([t,s]).
\end{align}
We now prove that the map $P$ satisfies a reverse-Lipschitz
inequality. Note that for any point $x$ in $B_t$,
$$ x = \Phi_i(P(x), -u(x)) = \Phi_i(P(x), t - f_i(P(x))), $$ Using the
bounds \eqref{ineq:Gronwall} and \eqref{ineq:LipTime}, we get that for
any $x,y$ in $B_t$,
\begin{align*}
  \nr{x - y} &\leq \nr{\Phi_i(P(x), t - f_i(P(x))) - \Phi_i(P(y), t - f_i(P(y)))} \\
  &\leq \nr{\Phi_i(P(x), t - f_i(P(x))) - \Phi_i(P(y), t - f_i(P(x)))} \\
&\qquad \qquad+ 
\nr{\Phi_i(P(y), t - f_i(P(x))) - \Phi_i(P(y), t - f_i(P(y)))} \\
&\leq \exp(C_\Phi T_{tr}) \nr{P(x)-P(y)} + \abs{f_i(P(x)) - f_i(P(y))}/\eps_{nd} \\
&\leq C' \nr{P(x)-P(y)}
  \end{align*}
  where $C' := \exp(C_\Phi) + C_L/\eps_{nd}$; we have used that $T_{tr}\leq 1$.
We can now bound the $(d-1)$--Hausdorff measure of $B_t$ from that of
$\Lambda$ using this Lipschitz bound and the inclusion \eqref{eq:boundaryinclusion}:
\begin{equation}
 \Haus^{d-1}(B_t) \leq \Haus^{d-1}(P^{-1}(P(B_t))) \leq C'^{d-1} \Haus^{d-1}(\Lambda). \label{eq:Bt}
\end{equation}
What remains to be done is to prove that the $(d-1)$--Hausdorff measure of
$\Lambda$ behaves like $\BigO(\abs{s-t})$, and this is where the transversality
condition will enter. 

Let us write 
\begin{align*}
 \facet_j:=
\begin{cases}
 f_j^{-1}(\lambda_j),&j\neq 0, i,\\
 \partial \Xloc\cap \partial L,&j=0.
\end{cases}
\end{align*}
Then $\partial L$ can be
partitioned (up to a $\Haus^{d-1}$--negligible set) into faces
$\partial L = \cup_{j\neq i} (\facet_j \cap L)$ and using the
coarea formula on each of the facets we get (writing $B:=f_i^{-1}([t,s])$)
\begin{align}
  \Haus^{d-1}(\Lambda) &= \sum_{j\neq i} \Haus^{d-1}(B \cap (\facet_j\cap L)) \notag \\
  &= \sum_{j\neq i} \int_{B\cap(\facet_j\cap L)}  \dd\Haus^{d-1}(x) \notag \\
  &= \sum_{j\neq i} \int_{t}^{s} \int_{S_u \cap (\facet_j \cap L)} 
\frac{1}{J_{ij}(x)} \dd\Haus^{d-2}(x) \d u,
\label{eq:Lambda}
\end{align}
where  and $J_{ij}(x)$ is the Jacobian of the restriction of the function
$f_i$ to the hypersurface $\facet_j$. More
precisely, 
$$J_{ij}(x) =
 \nr{\nabla f_i(x) - \sca{\nabla f_i(x)}{\nabla f_j(x)} \frac{\nabla f_j(x)}{\nr{\nabla f_j(x)}^2}} $$
 if $j\neq 0$, $i$, and 
\begin{align*}
 J_{i0}(x) =
 \nr{\nabla f_i(x) - \sca{\nabla f_i(x)}{v_0(x)} v_0(x)}
\end{align*}
where $v_0(x)\in \Normal_{x}\Xloc$ is a unit vector. Since $\Xloc$ is convex, for $\Haus^{d-1}$ a.e. $x\in \partial \Xloc$, the normal cone $\Normal_{x}\Xloc$ consists of only one direction, thus for such $x$ there is a unique choice of $v_0(x)$.
Let us write $v_i = \nabla f_i(x)/\nr{\nabla f_i(x)}$ and $v_j$ for either $\nabla f_j(x)/\nr{\nabla f_j(x)}$ or $v_0(x)$, we then have using \eqref{eq:T}
\begin{align}
J_{ij}(x)^2 &= \nr{\nabla f_i(x)}^2
 \nr{v_i - \sca{v_i}{v_j} v_j}^2 \notag\\
%&\geq \nr{\nabla f_i(x)}^2(1+\sca{v_i}{v_j}^2 - 2 \sca{v_i}{v_j}^2)\\
&\geq \nr{\nabla f_i(x)}^2(1- \sca{v_i}{v_j}^2)\notag\\
&\geq \eps^2_{nd}\eps^2_{tr}.\label{eq:Jij}
\end{align}
%By the transversality assumption, the unit vectors $v_i$ and $v_j$
%span an angle of at least $\eps_{tr}^2$, and by the non-degeneracy
%assumption $\nr{\nabla f_i(x)}\geq \eps_{nd}$. Thus,
%\begin{equation}
%J_{ij}(x) \geq \eps_{nd}^2 (1 -\cos(\eps_{tr})^2) = \eps_{nd} \sin(\eps_{tr}).
%\label{eq:Jij}
%\end{equation}
Combining \eqref{eq:Lambda} and \eqref{eq:Jij} gives us
\begin{align}
  \Haus^{d-1}(\Lambda) &\leq \frac{1}{\eps_{nd}\eps_{tr}}
  \sum_{j\neq i} \int_{t}^{s} \Haus^{d-2}(S_u \cap (\facet_j \cap L)) \d u \notag \\
  &= \frac{1}{\eps_{nd}\eps_{tr}} \int_{t}^{s} \Haus^{d-2}(S_u
  \cap \partial L) \d u. \label{eq:Lambdabis}
\end{align}
By definition, a point belongs to the intersection $S_u \cap \partial L$ if it
lies in the singularity set $\Sigma(\bm{\lambda}(u))$ where
$\bm{\lambda}(u) =
(\lambda_1,\hdots,\lambda_{i-1},u,\lambda_{i+1},\hdots,\lambda_N)$. By
Lemma~\ref{prop:HausSigma},
\begin{equation}
 \Haus^{d-2}(S_u \cap \partial L) \leq \Haus^{d-2}(\Sigma(\bm{\lambda}(u))) \leq C(d,\diam(\Xloc)) \cdot \frac{1}{\eps_{tr}}.  \label{eq:BoundSing}
\end{equation}
Combining \eqref{eq:Bt}, \eqref{eq:Lambdabis} and \eqref{eq:BoundSing}
we obtain $\Haus^d(B_t) \leq C \cdot \abs{t-s}$, which implies
\eqref{eq:B} using the boundedness of $\sigma$.
\end{proof}

\subsection{Continuity of partial derivatives}
\label{sec:continuity-PD}
We prove that the function $\Gloc$ defined in \eqref{eq:G} is continuously
differentiable by controlling the modulus of continuity of its partial
derivatives given in \eqref{eq:PD}. Again, we start with a slightly more
general proposition.
%% We already proved in \S\ref{sec:existence-PD} that the
%% partial derivatives of $\Gloc$ exist, and that the partial derivative
%% $\partial \Gloc/\partial \lambda_i$ is uniformly continuous with respect
%% to perturbations of $\lambda_i$.

\begin{proposition} Let $\sigma$  be a 
  continuous function on $\Xloc$ with modulus of continuity
  $\omega$ and $i\in\{1,\cdots,N\}$. Consider the following function on the cube
  $Q := [-T_{tr},T_{tr}]^N$:
\begin{align*}
H(\bm{\lambda}) := \int_{K(\bm{\lambda}) \cap  f_i^{-1}(\lambda_i)}
 \sigma(x) \dd \Haus^{d-1}(x).
\end{align*} 
Then $H$ is uniformly continuous on $Q$ with modulus of continuity
$$ \omega_H(\delta) = 
C_1\cdot(\omega(C_2\delta) + \abs{\delta}),$$ where the constants
only depend on $\nr{f_i}_{\Class^{1,1}(\Xloc)}$, $\diam(\Xloc)$, $\eps_{nd}$,
$\eps_{tr}$, and $\nr{\sigma}_\infty$.
\label{prop:PDalpha}
\end{proposition}

\begin{proof} Proposition~\ref{prop:PDUC} yields that
  the function $H$ is uniformly continuous with respect to changes of
  the $i$th variable. Let us now consider variations with respect to
  the $j$th variable, with $j\neq i$ by introducing
$$ h: s\in [-T_{tr},T_{tr}] \mapsto  \int_{K(\lambda_1,\hdots,\lambda_{j-1}, s, \lambda_{j+1}, \hdots, \lambda_N) \cap f_i^{-1}(\lambda_i)}
 \sigma(x) \dd \Haus^{d-1}(x).$$% g(\lambda_1,\hdots,\lambda_{j-1}, s, \lambda_{j+1}, \hdots, \lambda_N). $$
for some fixed $\bm{\lambda}\in [-T_{tr},T_{tr}]^N$.  We can
rewrite the difference between two values of $h$ using the coarea
formula. As before, we assume $s>t$ to fix the signs and introduce $L' :=
\Xloc \cap \bigcap_{k\not\in \{i,j\}} K_k(\lambda_k)$ and $S :=
f_i^{-1}(\lambda_i)$. We have
\begin{align*}
  h(s) - h(t) &= \int_{L' \cap K_j(s) \cap S} \sigma(x)
  \dd\Haus^{d-1}(x) -
  \int_{L' \cap K_j(t) \cap S} \sigma(x) \dd\Haus^{d-1}(x)  \notag \\
  &= \int_t^s \int_{L' \cap S\cap f_j^{-1}(u)} \frac{\sigma(x)}{J_{ij}(x)} \dd\Haus^{d-2}(x) \dd u,
\end{align*}
where the Jacobian factor $J_{ij}\geq \eps_{nd}\eps_{tr}$ from 
\eqref{eq:Jij}. Therefore,
\begin{equation}
h(s) \leq h(t) + \frac{\nr{\sigma}_\infty}{\eps_{nd}\eps_{tr}} \int_t^s \Haus^{d-2}(L\cap S\cap f^{-1}(u)) \dd u.
\label{eq:CoareaIJ}
\end{equation}
Just as in the proof of
Proposition~\ref{prop:PDUC}, the set $L\cap S\cap f^{-1}(u)$ is
included in the  set
$\Sigma(\lambda_1,\hdots,\lambda_{j-1},u,\lambda_j,\hdots,\lambda_N)$. Thus,
by Lemma~\ref{prop:HausSigma},
\begin{equation}
  \Haus^{d-2}(L\cap S\cap f^{-1}(u)) \leq \frac{C(d,\Xloc)}{\eps_{tr}}.
  \label{eq:HausSigmaIJ}
\end{equation}
Combining \eqref{eq:CoareaIJ} and \eqref{eq:HausSigmaIJ} we can see that the function $h$ is Lipschitz with constant 
$$C_h := C(d,\Xloc)\frac{\nr{\sigma}_\infty}
{\eps_{nd}\eps_{tr}^2}.$$
Finally,
\begin{align*}
  \abs{H(\bm{\mu}) - H(\bm{\lambda})} &\leq \sum_{j=1}^N
  \abs{H(\lambda_1,\hdots,\lambda_{k-1}, \mu_{j}, \hdots, \mu_N) - H(\lambda_1,\hdots, \lambda_{j}, \mu_{j+1}, \hdots, \mu_N)}\\
  &\leq \omega_h(\abs{\mu_i - \lambda_i}) +
  \sum_{j\neq i} C_h \abs{\mu_j - \lambda_j}\\
  &\leq \omega_h(\nr{\mu-\lambda}_{\infty}) + (N-1) C_h
  \nr{\mu-\lambda}_{\infty},
\end{align*}
where $\omega_h$ is the modulus of continuity defined in
Proposition~\ref{prop:PDUC}. This establishes the uniform continuity
of the function $H$, with the desired modulus of continuity.
\end{proof}

\subsection{Proof of Theorem~\ref{th:LocReg}}
Proposition \ref{prop:PDUC} shows that the partial derivative $\Gloc$ with
respect to the variable $\lambda_i$ exists and that its expression is given
by \eqref{eq:G:pd}. %%  that is
%% $$ g_i(\bm{\lambda}) = \int_{K(\bm{\lambda}) \cap \partial
%%   K_i(\lambda_i)} \frac{\rholoc(x)}{\nr{\nabla f_i(x)}} \dd
%% \Haus^{d-1}(x). $$
Applying Proposition \ref{prop:PDalpha} with $\sigma(x) =
\rholoc(x)/\nr{\nabla f_i(x)}$, we obtain $\Class^{0,\alpha}$ regularity for
each of the partial derivatives of $\Gloc$ on the cube $Q :=
[-T_{tr},T_{tr}]^N$ from the $\Class^{0,\alpha}$ regularity of $\rholoc$. Moreover, the $\Class^{0,\alpha}$ constant of each partial derivative over  $Q$
is controlled by
$$ C(\diam(\Xloc),\eps_{nd}, \eps_{tr}, \nr{\nabla f_i}_{\Class^{1,1}(X)}, \nr{\rholoc}_{\Class^{0,\alpha}(X)}).$$

\bibliographystyle{amsplain}
\bibliography{ma}
\end{document}